\numberwithin{equation}{section}
\renewcommand{\d}{\mathrm{d}}
\newcommand{\e}{\mathrm{e}}
\newcommand{\R}{\mathbb{R}}
\newcommand{\Z}{\mathbb{Z}}
\newcommand{\bv}{\bm{v}}
\newcommand{\bw}{\bm{w}}
\newcommand{\bx}{\bm{x}}
\newcommand{\hE}{\mathcal{E}}
\newcommand{\Mh}{\mathcal{M}_h}
\newcommand{\<}{\langle}
\renewcommand{\>}{\rangle}
\newcommand{\uinit}{u_{\text{\rm init}}}
\newcommand{\kp}{\kappa}
\newcommand{\nn}{\nonumber}
\newcommand{\dt}{{\tau}}
\newcommand{\eps}{\varepsilon}
\newcommand{\daoshu}[2]{\frac{\d #1}{\d #2}}
\newcommand{\expp}[1]{\exp\{#1\}}
\newcommand{\Cp}{C_\text{\rm per}}
\begin{document}

\title{Stabilized exponential-SAV schemes  preserving energy dissipation law and maximum bound principle
for the Allen--Cahn type equations
\thanks{This work is supported by the CAS AMSS-PolyU Joint Laboratory of Applied Mathematics.
L. Ju's work is partially supported by US National Science Foundation grant DMS-2109633
and US Department of Energy grant DE-SC0020270.
X. Li's work is partially supported by National Natural Science Foundation of China grant 11801024
and the Hong Kong Polytechnic University grants 4-ZZMK and 1-BD8N.
Z. Qiao's work is partially supported by the Hong Kong Research Council RFS grant RFS2021-5S03
and GRF grants 15300417 and 15302919.}
}

\titlerunning{sESAV schemes for Allen--Cahn type equations}        

\author{Lili Ju \and Xiao Li \and Zhonghua Qiao
}


\institute{L. Ju \at
              Department of Mathematics, University of South Carolina, Columbia, SC 29208, USA\\
              \email{ju@math.sc.edu}           
           \and
           X. Li \at
              Corresponding author. Department of Applied Mathematics, The Hong Kong Polytechnic University, Hung Hom, Kowloon, Hong Kong\\
              \email{xiao1li@polyu.edu.hk}
           \and
           Z. Qiao \at
              Department of Applied Mathematics \& Research Institute for Smart Energy, The Hong Kong Polytechnic University, Hung Hom, Kowloon, Hong Kong\\
              \email{zqiao@polyu.edu.hk}
}

\date{Received: date / Accepted: date}

\maketitle

\begin{abstract}
It is well-known that the Allen--Cahn equation not only satisfies the energy dissipation law but also possesses the maximum bound principle (MBP) in the sense that the absolute value of its solution is pointwise bounded for all time by some specific constant under appropriate initial/boundary conditions. In recent years, the scalar auxiliary variable (SAV) method and many of its variants have attracted much attention in numerical solution for gradient flow problems due to their inherent advantage of  preserving certain discrete analogues of the energy dissipation law. However, existing SAV schemes usually fail to preserve the MBP when applied to the Allen--Cahn equation. In this paper, we develop and analyze new first- and second-order stabilized exponential-SAV schemes for a class of Allen--Cahn type equations, which are shown to simultaneously preserve the energy dissipation law and MBP in discrete settings. In addition, optimal error estimates for the numerical solutions are rigorously obtained for both schemes. Extensive numerical tests and comparisons are also conducted to demonstrate the performance of the proposed schemes.
\keywords{maximum bound principle \and
energy dissipation \and
stabilized method \and
exponential scalar auxiliary variable}
\subclass{35K55 \and 65M12 \and 65M15 \and 65F30}
\end{abstract}

\section{Introduction}

Let us consider a class of reaction-diffusion equations taking the following form
\begin{equation}
\label{AllenCahn}
u_t = \eps^2\Delta u + f(u), \quad t > 0, \ \bx \in \Omega,
\end{equation}
where $\Omega\subset\R^d$ is a spatial domain,
$u=u(t,\bx):[0,\infty)\times\overline{\Omega}\to\R$ is the unknown function,
$\eps>0$ denotes an interfacial parameter,
and $f(u)$ is a nonlinear reaction term with $f$ being continuously differentiable.
We also impose the initial condition
\[
u(0,\cdot)=\uinit \quad \text{on } \overline{\Omega}
\]
and the periodic or homogeneous Neumann boundary conditions.
The equation \eqref{AllenCahn} usually can be regarded as
the $L^2$ gradient flow with respect to the energy functional
\begin{equation}
\label{energy}
E(u) = \int_\Omega \Big( \frac{\eps^2}{2}|\nabla u(\bx)|^2 + F(u(\bx)) \Big) \, \d\bx,
\end{equation}
where $F$ is a smooth potential function satisfying $F'=-f$,
and thus, the solution to the equation \eqref{AllenCahn} decreases the energy \eqref{energy} along with the time,
i.e., $\daoshu{}{t}E(u(t))\le0$,
which is often called the {\em energy dissipation law}.
In addition, we also assume that
\begin{equation}
\label{assump}
\text{there exists a constant $\beta>0$ such that $f(\beta) \le 0\le f(-\beta)$}.
\end{equation}
It has been proved in \cite{DuJuLiQi21} that
the equation \eqref{AllenCahn} satisfies the {\em maximum bound principle} (MBP) in the sense that
if the absolute value of the initial data is bounded pointwise by $\beta$,
then the absolute value of the solution is also bounded by $\beta$ pointwise for all time, i.e.,
\begin{equation}
\label{mbp}
\max_{\bx\in\overline{\Omega}} |\uinit(\bx)| \le \beta \quad \Longrightarrow \quad
\max_{\bx\in\overline{\Omega}} |u(t,\bx)| \le \beta, \quad \forall \, t > 0.
\end{equation}

An important and special case of \eqref{AllenCahn} is the Allen--Cahn equation with $f(u)=u-u^3$,
which was originally introduced in \cite{AlCa79} to model the motion of anti-phase boundaries in crystalline solids.
The solution represents the difference between
the concentrations of two components of the alloy
and thus should be evaluated between $-1$ and $1$,
which is  guaranteed by the MBP.
With the corresponding double-well potential $F(u)=\frac{1}{4}(u^2-1)^2$,
the associated energy functional \eqref{energy} decays in time,
which reflects the energy dissipation of the phase transition process.
Both the MBP and the energy stability are also satisfied by some variants of \eqref{AllenCahn},
such as the nonlocal Allen--Cahn equation for phase separations within long-range interactions \cite{Bates06,DuJuLiQi19}
and the fractional Allen--Cahn equation used to describe some anomalous diffusion processes \cite{DuYaZh20,GuiZh15}.
To obtain stable numerical simulations and avoid nonphysical solutions for these models,
it is highly desirable to design  numerical schemes preserving effectively  these two basic physical properties, the MBP and the energy dissipation law in time discrete settings.

In the past decades,
there has been a large amount of research denoted to energy-stable numerical schemes for time discretization of gradient flow equations,
such as convex splitting schemes \cite{GuWaWi14,ShWaWaWi12,WiWaLo09},
stabilized semi-implicit schemes \cite{FeTaYa13,ShYa10b,XuTa06},
and exponential time differencing (ETD) schemes \cite{DuJuLiQi19,JuLiQiZh18,JuZhDu15}.
More recently, invariant energy quadratization (IEQ) schemes \cite{XuYaZhXi19,Yang16,YangZh20}
and scalar auxiliary variable (SAV) schemes \cite{ShXu18,ShXuYa18,ShXuYa19}
were proposed to  naturally provide energy-stable and linear algorithms with second-order temporal accuracy.
While the main idea for both methods is  to reformulate and split the energy functional \eqref{energy} in the quadratic form by introducing extra variables, the SAV approach is usually more efficient in terms of computations.
Many variants of SAV schemes were developed later;
see \cite{AkrivisLiLi19,ChenYa19,ChengLiSh20,ChengLiSh21,HouAzXu19,HuangShYa20,LiuLi20} and the references therein.
In practice, a suitable stabilization term is also introduced in such  splitting in order to maintain numerical stability
for highly stiff problems. On the other hand, existing SAV-type schemes usually fail to preserve the MBP, and a special case is the auxiliary variable proposed in \cite{HuangShYa20} which is shown to be positivity-preserving.

The MBP preservation recently has also attracted increasingly attention in the field of numerical methods for the Allen--Cahn type equations of the form \eqref{AllenCahn}.
The semi-implicit schemes were extensively studied in,
e.g. \cite{HoLe20,HoTaYa17,LiaoTaZh20,ShTaYa16,TaYa16,XiFeYu17},
for the classic, fractional, or surface Allen--Cahn equations.
The first- and second-order stabilized ETD schemes were shown to
preserve the MBP unconditionally for the nonlocal Allen--Cahn equation \cite{DuJuLiQi19}
and the conservative Allen--Cahn equation \cite{LiJuCaFe21}.
An abstract framework on MBP preservation of the ETD schemes
for a class of semilinear parabolic equations was established in \cite{DuJuLiQi21},
where sufficient conditions for the linear and nonlinear operators are presented in order to guarantee the MBP.
A family of  stabilized integrating factor Runge-Kutta (IFRK) schemes, up to third order, were developed in \cite{LiLiJuFe21}, which can
unconditionally preserve the MBP. In addition, a fourth-order (conditionally) MBP-preserving  IFRK scheme was presented in \cite{JuLiQiYa21}.
So far, as one of the very popular methods, there is still not much systematical study on MBP-preserving SAV schemes.

The main goal of this paper is to develop
first- and second-order energy dissipative and MBP-preserving SAV-type schemes for the Allen--Cahn type equation \eqref{AllenCahn}
by using an appropriate stabilization technique.
Specifically,
we propose new stabilized  exponential-SAV (ESAV) schemes
by introducing an artificial stabilization term rather than basing on the splitting of the energy functional suggested in \cite{ShXuYa19}.
With the effect of such stabilization, we show that
the proposed first-order scheme  preserves the MBP unconditionally with an appropriate stabilizing parameter
and the second-order one does under a time step size constraint.
A main difficulty for numerical analysis of the two schemes lies in that
the coefficients of the nonlinear term and the stabilization term are varying rather than constant
due to the use of the SAV approach.
With the help of the energy dissipation and MBP,  we are able to show that such variable coefficients are bounded from above and below by certain positive constants, and consequently, optimal error estimate are successfully obtained for the proposed schemes.
To the best of our knowledge, this is the first work in the direction of designing such  SAV-type methods. More importantly, the proposed stabilizing approaches can be easily generalized to deal with many other type of gradient flow problems where the SAV methods apply.

The rest of this paper is organized as follows.
Section \ref{sect_spacedis} is devoted to spatial discretization of the equation \eqref{AllenCahn}
and a brief summary of the classic SAV and ESAV schemes for the time integration. Then, our first- and second-order stabilized ESAV schemes  are presented in Section \ref{sect_sESAVsch},
together with the energy dissipation law, MBP preservation, and convergence analysis of the resulting fully discrete systems.
In Section \ref{sect_experiment},
extensive numerical tests and comparisons are carried out to demonstrate the performance of the proposed schemes.
Some concluding remarks are finally given in Section \ref{sect_conclusion}.

\section{Spatial discretization and SAV schemes for time integration}
\label{sect_spacedis}

For simplicity, throughout this paper we consider the two-dimensional square domain $\Omega=(0,L)\times(0,L)$
for the equation \eqref{AllenCahn} equipped with periodic boundary conditions. Note that the extensions to three-dimension problems and
homogeneous Neumann boundary condition are straightforward. For other feasible spatial discretization, we refer to \cite{DuJuLiQi21} for more details.
In this section, we first present some notations related to the spatial discretization by central finite difference, then
briefly review the classic SAV schemes  for time integration.

\subsection{Spatial discretization and the space-discrete problem}

Given a positive integer $M$,
we set $h=L/M$ to be the size of the uniform mesh partitioning $\overline{\Omega}$.
Denote by $\Omega_h$ the set of mesh points $(x_i,y_j)=(ih,jh)$, $1\le i,j\le M$.
For a grid function $v$ defined on $\Omega_h$, we write $v_{ij}=v(x_i,y_j)$ for simplicity.
Let $\Mh$ be the set of all periodic grid functions on $\Omega_h$, i.e.,
\[
\Mh = \{v:\Omega_h\to\R \,|\, v_{i+kM,j+lM}=v_{ij}, \ k,l\in\Z, \ 1 \le i,j\le M\}.
\]
The discrete inner product $\<\cdot,\cdot\>$, discrete $L^2$ norm $\|\cdot\|$,
and discrete $L^\infty$ norm $\|\cdot\|_\infty$ can be defined as usual, namely,
\[
\<v,w\> = h^2 \sum_{i,j=1}^M v_{ij} w_{ij}, \qquad
\|v\| = \sqrt{\<v,v\>}, \qquad
\|v\|_\infty = \max_{1\le i,j\le M} |v_{ij}|
\]
for any $v,w\in\Mh$, and
\[
\<\bv,\bw\> = \<v^1,w^1\> + \<v^2,w^2\>, \qquad
\|\bv\| = \sqrt{\<\bv,\bv\>}
\]
for any $\bv=(v^1,v^2)^T, \bw=(w^1,w^2)^T\in\Mh\times\Mh$.
We apply the second-order central finite difference  to approximate spatial differentiation operators.
For any $v\in\Mh$, the discrete Laplace operator $\Delta_h$ is defined by
\[
\Delta_h v_{ij} = \frac{1}{h^2} (v_{i+1,j}+v_{i-1,j}+v_{i,j+1}+v_{i,j-1}-4v_{ij}), \quad 1 \le i,j \le M,
\]
and the discrete gradient operator $\nabla_h$ is defined by
\[
\nabla_h v_{ij} = \Big( \frac{v_{i+1,j}-v_{ij}}{h}, \frac{v_{i,j+1}-v_{ij}}{h}\Big)^T, \quad 1 \le i,j \le M.
\]
By periodic boundary conditions,
the summation-by-parts formula is easy to verify:
\[
\<v,\Delta_hw\> = - \<\nabla_hv, \nabla_hw\> = \<\Delta_hv,w\>, \quad \forall\,v,w\in\Mh.
\]
Obviously, $\Delta_h$ is self-adjoint and negative semi-definite.
For any function $\varphi:\overline{\Omega}\to\R$,
we denote by $I_h$ the operator projecting $\varphi$ on the mesh as  $(I_h\varphi)_{ij}=\varphi(x_i,y_j)$ for $1\le i,j\le M$.
For example, we have
\[
\max_{1\le i,j\le N} |\Delta_h(I_h\varphi)_{ij} - \Delta\varphi(x_i,y_j)| \le C_\varphi h^2, \quad
\forall \, \varphi \in \Cp^4(\overline{\Omega}).
\]
For simplicity, we may directly omit the notation $I_h$ when there is no ambiguity.

Since $\Mh$ is a finite-dimensional linear space,
any grid function $v\in\Mh$ and any linear operator $Q:\Mh\to\Mh$
can be regarded as a vector in $\R^{M^2}$ and a matrix in $\R^{M^2\times M^2}$, respectively.
We still use the notations $\|\cdot\|$ and $\|\cdot\|_\infty$
to denote the matrix induced-norms consistent with $\|\cdot\|$ and $\|\cdot\|_\infty$ defined for vectors before, respectively.
By regarding $\Delta_h$ as a linear operator,
we know that $\Delta_h$ is the generator of a contraction semigroup on $\Mh$ \cite{DuJuLiQi21}.
Instead, by viewing $\Delta_h$ as a matrix,
it is weakly diagonally dominant with all diagonal entries negative.
Moreover, we have the following useful estimate
and the proof can be found in \cite{TaYa16}.

\begin{lemma}
\label{lem_lapdiff}
For any $a>0$, we have $\|(aI-\Delta_h)^{-1}\|_\infty \le a^{-1}$, where $I$ represents the identity matrix.
\end{lemma}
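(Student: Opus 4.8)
The plan is to exploit the explicit sign structure of the matrix $B := aI - \Delta_h$ together with a discrete maximum principle, an approach that simultaneously yields invertibility and the desired norm bound. Writing out the five-point stencil, each row of $B$ has diagonal entry $a + 4/h^2 > 0$ and exactly four off-diagonal entries equal to $-1/h^2 < 0$, the remaining entries being zero; moreover, since every row of $\Delta_h$ sums to zero, each row of $B$ sums to $a$. Thus $B$ is strictly diagonally dominant with positive diagonal, $|B_{ii}| = a + 4/h^2 > 4/h^2 = \sum_{j\ne i}|B_{ij}|$, which already guarantees that $B$ is nonsingular.

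To bound $\|B^{-1}\|_\infty$, I would fix an arbitrary $g \in \Mh$, set $w = B^{-1} g$, and establish $\|w\|_\infty \le a^{-1}\|g\|_\infty$; taking the supremum over $g$ with $\|g\|_\infty = 1$ then gives the claim. The equation $Bw = g$ reads $a w_{ij} - \Delta_h w_{ij} = g_{ij}$, that is,
\[
a w_{ij} = g_{ij} + \frac{1}{h^2}\big(w_{i+1,j}+w_{i-1,j}+w_{i,j+1}+w_{i,j-1} - 4 w_{ij}\big).
\]
Let $(i_0,j_0)$ be an index at which $|w_{ij}|$ attains its maximum over the finite periodic grid. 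I would split into the cases $w_{i_0 j_0} \ge 0$ and $w_{i_0 j_0} < 0$. In the first case every neighbor satisfies $w \le w_{i_0 j_0}$, so the bracketed stencil term is nonpositive and hence $a w_{i_0 j_0} \le g_{i_0 j_0} \le \|g\|_\infty$; in the second case every neighbor satisfies $w \ge w_{i_0 j_0}$, the stencil term is nonnegative, and $a w_{i_0 j_0} \ge g_{i_0 j_0} \ge -\|g\|_\infty$. Either way $a\|w\|_\infty = a|w_{i_0 j_0}| \le \|g\|_\infty$, which is exactly the bound sought.

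The only delicate point is the sign bookkeeping at the extremal index: one must track which extremum of $w$ is in play so that the four neighboring values lie on the favorable side of $w_{i_0 j_0}$, rendering the discrete Laplacian contribution nonpositive (resp.\ nonnegative). The periodic boundary conditions cause no difficulty here, since they merely identify grid points while the stencil comparison holds regardless. An equivalent route would observe that $B$ is a nonsingular M-matrix, hence $B^{-1} \ge 0$ entrywise, and then use $B\mathbf{1} = a\mathbf{1}$ to obtain $B^{-1}\mathbf{1} = a^{-1}\mathbf{1}$; since the induced $\infty$-norm of a nonnegative matrix equals its largest row sum, $\|B^{-1}\|_\infty = \max_i (B^{-1}\mathbf{1})_i = a^{-1}$. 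I would present the maximum-principle version as the primary argument, as it requires no external M-matrix theory and delivers invertibility ($Bw=0 \Rightarrow \|w\|_\infty \le 0 \Rightarrow w = 0$) as a byproduct.
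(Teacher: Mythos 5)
Your argument is correct and complete. Note, however, that the paper does not actually prove this lemma: it simply states it and refers the reader to \cite{TaYa16} for the proof, so there is no in-paper argument to compare against. Your discrete maximum principle proof is a standard and fully self-contained way to obtain the bound: the sign bookkeeping at the extremal index is handled correctly in both cases, the row-sum identity $B\mathbf{1}=a\mathbf{1}$ (which follows from $\Delta_h$ annihilating constants) is exactly what converts diagonal dominance of $aI-\Delta_h$ into the quantitative bound $a\|w\|_\infty\le\|g\|_\infty$, and invertibility comes out as a byproduct. The cited reference argues from the same structural facts (nonpositive off-diagonal entries, positive diagonal, row sums equal to $a$), so your route is essentially the argument the paper is outsourcing; your alternative M-matrix observation even sharpens the conclusion to the equality $\|(aI-\Delta_h)^{-1}\|_\infty=a^{-1}$. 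The only cosmetic caveat is that the identity $\sum_{j\ne i}|B_{ij}|=4/h^2$ presumes the four periodic neighbors are distinct off-diagonal positions (i.e., $M\ge3$); in the degenerate cases the off-diagonal row sum only decreases, so the dominance gap $a$ and the stencil comparison at the extremum survive unchanged.
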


We have assumed that $f$ is continuously differentiable,
so $\|f'\|_{C[-\beta,\beta]}$ is always finite and then the following result is valid \cite{DuJuLiQi21}.

\begin{lemma}
\label{lem_nonlinear}
Under the assumption \eqref{assump},
if $\kp\ge \|f'\|_{C[-\beta,\beta]}$ holds for some positive constant $\kp$,
then we have $|f(\xi)+\kp\xi|\le\kp\beta$ for any $\xi\in[-\beta,\beta]$.
\end{lemma}

Next, let us introduce the space-discrete version of \eqref{AllenCahn}.
The space-discrete problem is to find a function $u_h:[0,\infty)\to\Mh$ satisfies
\begin{equation}
\label{semidis}
\daoshu{u_h}{t} = \eps^2\Delta_h u_h + f(u_h)
\end{equation}
with $u_h(0) = \uinit$.
It is easy to verify the energy dissipation law for \eqref{semidis} in the sense that
\begin{equation*}
\daoshu{}{t} E_h(u_h(t)) \le 0,
\end{equation*}
where $E_h$ is the spatially-discretized energy functional defined as
\begin{equation}\label{egydis}
E_h(v) := \frac{\eps^2}{2} \|\nabla_h v\|^2 + \<F(v),1\>, \quad \forall \, v \in \Mh.
\end{equation}
According to \cite{DuJuLiQi21}, the MBP also holds for $u_h$, i.e.,
$\|u_h(t)\|_\infty\le\beta$ for any $t>0$  if $\|\uinit\|_\infty\le\beta$.

Let us partition the time interval into $\{t_n=n\dt\}_{n\ge0}$ with $\dt>0$ being a uniform time step size.
In the remaining part of the paper, we will study time integration schemes for the space-discrete system \eqref{semidis}.
For simplicity of representation, we denote by $u^n$ the fully discrete approximate value of $u_e(t_n)$ or $u_{h,e}(t_n)$
with $u_e$ and $u_{h,e}$ denoting the exact solutions to
the original continuous problem \eqref{AllenCahn} and the space-discrete problem \eqref{semidis}, respectively.
In general, for a sequence $\{v^n\}$, we define the following notations:
\[
\delta_t v^{n+1} = \frac{v^{n+1}-v^n}{\dt}, \qquad
v^{n+\frac{1}{2}} = \frac{v^{n+1}+v^n}{2}.
\]

\subsection{Classic SAV schemes and stabilization}
\label{sect_classicSAV}

Here we give a brief summary of the classic SAV schemes.
The main idea of SAV is to reformulate the energy functional \eqref{energy} in the quadratic form
by introducing an appropriate SAV.
The framework of the classic SAV schemes is based on a linear splitting of the energy functional
and, as shown in \cite{ShXuYa19}, a suitable stabilization term is usually also introduced in such  splitting
so that the numerical simulations can provide satisfactory results for highly stiff problems in practice.

Denoting by $\kp\ge0$ the stabilizing constant,
the energy functional \eqref{energy} can be rewritten with a stabilization term as
\begin{align}
\label{sav_splitting}
E(u) &= \int_\Omega \Big( \frac{\eps^2}{2}|\nabla u|^2 + \frac{\kp}{2}u^2 + F(u) - \frac{\kp}{2}u^2 \Big) \, \d\bx\nonumber\\
&= \frac{\eps^2}{2}\|\nabla u\|_{L^2}^2 + \frac{\kp}{2}\|u\|_{L^2}^2
+ \int_\Omega \Big( F(u) - \frac{\kp}{2}u^2 \Big) \, \d\bx.
\end{align}
Suppose the last term in \eqref{sav_splitting} is bounded from below, that is,
\begin{equation}
\label{sav_e2}
E_2(u) := \int_\Omega \Big( F(u) - \frac{\kp}{2}u^2 \Big) \, \d\bx \ge -C_0
\end{equation}
for some constant $C_0\geq 0$.
Choosing $\delta>C_0$,
let us  define an auxiliary variable $r(t)=\sqrt{E_2(u(t))+\delta}$, and reformulate the original problem \eqref{AllenCahn} to the following equivalent system:
\begin{subequations}
\begin{align*}
u_t & = \eps^2\Delta u - \kp u + \frac{r}{\sqrt{E_2(u)+\delta}} (f(u)+\kp u), \\
r_t & = - \frac{1}{2\sqrt{E_2(u)+\delta}} (f(u)+\kp u,u_t).
\end{align*}
\end{subequations}
Then the first-order SAV scheme (SAV1) is given by \cite{ShXuYa19}
\begin{subequations}
\label{eq_sav1shen}
\begin{align}
\delta_t u^{n+1} & = \eps^2\Delta_h u^{n+1} - \kp u^{n+1} + \frac{r^{n+1}}{\sqrt{E_{2h}(u^n)+\delta}}(f(u^n)+\kp u^n),
\label{eq_sav1shena} \\
\delta_t r^{n+1} & = - \frac{1}{2\sqrt{E_{2h}(u^n)+\delta}} \<f(u^n)+\kp u^n,\delta_t u^{n+1}\>,
\end{align}
\end{subequations}
and the Crank--Nicolson type second-order SAV scheme (SAV2) reads as \cite{ShXuYa19}
\begin{subequations}
\label{eq_sav2shen}
\begin{align}
\delta_t u^{n+1}
& = \eps^2\Delta_h u^{n+\frac{1}{2}} - \kp u^{n+\frac{1}{2}}
+ \frac{r^{n+1}+r^n}{2\sqrt{E_{2h}(\widehat{u}^{n+\frac{1}{2}})+\delta}}
(f(\widehat{u}^{n+\frac{1}{2}})+\kp \widehat{u}^{n+\frac{1}{2}}), \\
\delta_t r^{n+1}
& = - \frac{1}{2\sqrt{E_{2h}(\widehat{u}^{n+\frac{1}{2}})+\delta}}
\<f(\widehat{u}^{n+\frac{1}{2}})+\kp \widehat{u}^{n+\frac{1}{2}},\delta_t u^{n+1}\>,
\end{align}
\end{subequations}
where 
$\widehat{u}^{n+\frac{1}{2}}$ is generated by solving the system
\[
\frac{\widehat{u}^{n+\frac{1}{2}}-u^n}{\dt/2}
= \eps^2\Delta_h \widehat{u}^{n+\frac{1}{2}} + f(u^n) - \kp (\widehat{u}^{n+\frac{1}{2}}-u^n).
\]
Both \eqref{eq_sav1shen} and \eqref{eq_sav2shen} are linear schemes and energy dissipative
in the sense that $\overline{E}_h(u^{n+1},r^{n+1})\le \overline{E}_h(u^n,r^n)$
with respect to the following modified energy
\[
\overline{E}_h(u^n,r^n) := \frac{\eps^2}{2}\|\nabla_h u^n\|^2 + \frac{\kp}{2}\|u^n\|^2 + (r^n)^2 - \delta.
\]
Note that in the discrete settings, $\overline{E}_h(u^n,r^n)$ is only an approximation of the original discrete energy $E_h(u^n)$ defined in \eqref{egydis} and they are not equal in general
since $r^n\not=\sqrt{E_{2h}(u^n)+\delta}$ for $n\ge 1$.
However, the MBP cannot be theoretically preserved by the above classic SAV schemes \eqref{eq_sav1shen} and \eqref{eq_sav2shen}
(See the discussion in Remark \ref{rmk_sav1_nonmbp}).

\subsection{Exponential-SAV schemes}
\label{sect_ESAV}

A variant of the classic SAV approach, called the exponential-SAV (ESAV) scheme, was studied in \cite{LiuLi20}.
We below  summarize the ESAV method, also with a stabilization term based on the energy splitting \eqref{sav_splitting}.
Define the auxiliary variable by $r(t)=\expp{E_2(u(t))}$
and reformulate \eqref{AllenCahn} as
\begin{subequations}
\begin{align*}
& u_t = \eps^2\Delta u - \kp u + \frac{r}{\expp{E_2(u)}} (f(u)+\kp u), \\
& (\ln r)_t = - \frac{r}{\expp{E_2(u)}} (f(u)+\kp u,u_t).
\end{align*}
\end{subequations}
Then the first-order ESAV scheme (ESAV1) reads as
\begin{subequations}
\label{eq_esav1}
\begin{align}
& \delta_t u^{n+1} = \eps^2\Delta_h u^{n+1} - \kp u^{n+1} + \frac{r^n}{\expp{E_{2h}(u^n)}}(f(u^n)+\kp u^n),
\label{eq_esav1a} \\
& \frac{\ln r^{n+1} - \ln r^n}{\dt} = - \frac{r^n}{\expp{E_{2h}(u^n)}} \<f(u^n)+\kp u^n,\delta_t u^{n+1}\>.
\end{align}
\end{subequations}
Setting $\kp=0$, the scheme \eqref{eq_esav1} reduces exactly to the original ESAV scheme (without stabilization) presented in \cite{LiuLi20}.
The Crank--Nicolson type ESAV scheme (ESAV2) is given by
\begin{subequations}
\label{eq_esav2}
\begin{align}
& \delta_t u^{n+1} = \eps^2\Delta_h u^{n+\frac{1}{2}} - \kp u^{n+\frac{1}{2}}
+ \frac{\widehat{r}^{n+\frac{1}{2}}}{\expp{E_{2h}(\widehat{u}^{n+\frac{1}{2}})}}(f(\widehat{u}^{n+\frac{1}{2}})+\kp \widehat{u}^{n+\frac{1}{2}}),
\label{eq_esav2a} \\
& \frac{\ln r^{n+1} - \ln r^n}{\dt} = - \frac{\widehat{r}^{n+\frac{1}{2}}}{\expp{E_{2h}(\widehat{u}^{n+\frac{1}{2}})}}
\<f(\widehat{u}^{n+\frac{1}{2}})+\kp \widehat{u}^{n+\frac{1}{2}},\delta_t u^{n+1}\>,
\end{align}
\end{subequations}
where the value $(\widehat{u}^{n+\frac{1}{2}},\widehat{r}^{n+\frac{1}{2}})$
can be generated by an extrapolation as suggested in \cite{LiuLi20}
or predicted by the first-order scheme \eqref{eq_esav1} with half of the time step size:
\begin{subequations}
\label{eq_esav21}
\begin{align}
& \frac{\widehat{u}^{n+\frac{1}{2}}-u^n}{\dt/2}
= \eps^2\Delta_h \widehat{u}^{n+\frac{1}{2}} - \kp \widehat{u}^{n+\frac{1}{2}} + \frac{r^n}{\expp{E_{2h}(u^n)}}(f(u^n)+\kp u^n), \\
& \ln \widehat{r}^{n+\frac{1}{2}} - \ln r^n
= - \frac{r^n}{\expp{E_{2h}(u^n)}} \<f(u^n)+\kp u^n,\widehat{u}^{n+\frac{1}{2}}-u^n\>.
\end{align}
\end{subequations}
We will adopt \eqref{eq_esav21} in the numerical experiments for the comparison.
Both \eqref{eq_esav1} and \eqref{eq_esav2} are energy dissipative in the sense that $\widetilde{E}_h(u^{n+1},r^{n+1})\le \widetilde{E}_h(u^n,r^n)$ with respect to the following modified energy
\[
\widetilde{E}_h(u^n,r^n) := \frac{\eps^2}{2}\|\nabla_h u^n\|^2 + \frac{\kp}{2}\|u^n\|^2 + \ln r^n.
\]
Similar to the classic SAV schemes,
the above ESAV schemes \eqref{eq_esav1} and \eqref{eq_esav2} also cannot preserve the MBP (See the discussion in Remark \ref{rmk_sav1_nonmbp}).

\section{New stabilized  exponential-SAV schemes}
\label{sect_sESAVsch}

From now on, we always assume the initial value $\uinit$ has the enough regularity as needed.
By spatial discretization, there is a constant $h_0>0$,
depending on $\uinit$, $F$, $\eps$, such that
\begin{equation}
\label{uinit_approx}
E_h(\uinit) \le E(\uinit) + 1, \quad
\|\Delta_h\uinit\| \le \|\Delta\uinit\|_{L^2} + 1,
\qquad \forall \, h \in (0,h_0].
\end{equation}
The continuity of $F$ implies that $F$ is bounded from below on $[-\beta,\beta]$.
Therefore, according to the MBP \eqref{mbp},
it holds that
$$E_1(u):=\int_\Omega F(u) \,\d\bx\ge -C_*$$
for some constant $C_*\geq 0$.
Introducing $s(t) = E_1(u(t))$,
we then have the following energy which is equivalent to $E(u)$:
\[
\hE(u,s) = \frac{\eps^2}{2} \|\nabla u\|_{L^2}^2 + s.
\]
Partially inspired by the idea of ESAV method \cite{LiuLi20},
we rewrite the equation \eqref{AllenCahn} as the following equivalent system:
\begin{subequations}
\begin{align*}
u_t & = \eps^2\Delta u + \frac{\expp{s}}{\expp{E_1(u)}} f(u), \\
s_t & = - \frac{\expp{s}}{\expp{E_1(u)}} (f(u),u_t).
\end{align*}
\end{subequations}
The corresponding space-discrete problem is
to find $u_h(t)\in\Mh$ and $s_h(t)$ for $t>0$ satisfies
\begin{subequations}
\label{eq_semidis}
\begin{align}
\daoshu{u_h}{t} & = \eps^2\Delta_h u_h + g(u_h,s_h) f(u_h), \label{eq_semidisa} \\
\daoshu{s_h}{t} & = - g(u_h,s_h) \Big\<f(u_h),\daoshu{u_h}{t}\Big\>, \label{eq_semidisb}
\end{align}
\end{subequations}
where
\begin{equation}
\label{gg}
g(u_h,s_h) := \frac{\expp{s_h}}{\expp{E_{1h}(u_h)}} > 0,
\end{equation}
and  $E_{1h}$ denotes the space-discrete version of $E_1$, i.e.,
$E_{1h}(v) := \<F(v),1\>$ for any $v \in \Mh$.
Based on such an equivalent form, we will give the stabilized ESAV schemes in the fully discrete version.
This section is devoted to the first-order scheme
and the second-order one will be discussed in the next section.
Recall that we use $u^n$ to represent the fully discrete approximate value of $u_e(t_n)$,
the exact solution to the problem  \eqref{AllenCahn}.

\subsection{First-order sESAV scheme}
\label{sect_sav1}

The first-order stabilized ESAV fully-discrete scheme (sESAV1) is given by
\begin{subequations}
\label{eq_sav1stab}
\begin{align}
\delta_t u^{n+1} & = \eps^2\Delta_h u^{n+1} +  g(u^n,s^n)f(u^n) - \kp  g(u^n,s^n)(u^{n+1}-u^n), \label{eq_sav1staba} \\
\delta_t s^{n+1} & = -  g(u^n,s^n)\<f(u^n),\delta_t u^{n+1}\>, \label{eq_sav1stabb}
\end{align}
\end{subequations}
where  $\kp\ge0$ is a stabilizing constant and $g(u^n,s^n)>0.$
The scheme \eqref{eq_sav1stab} is started by $u^0=\uinit$ and $s^0=E_{1h}(u^0)$.
We can rewrite \eqref{eq_sav1stab} equivalently as follows:
\begin{subequations}
\label{eq_sav1var}
\begin{align}
\Big[\Big(\frac{1}{\dt}+\kp  g(u^n,s^n)\Big)I-\eps^2\Delta_h\Big] u^{n+1}
& = \frac{u^n}{\dt} +g(u^n,s^n)f(u^n) + \kp  g(u^n,s^n)u^n,  \label{lem_sav1_pf1} \\
s^{n+1} & = s^n -  g(u^n,s^n)\<f(u^n),u^{n+1}-u^n\>. \label{lem_sav1_pf2}
\end{align}
\end{subequations}
Obviously, \eqref{eq_sav1var} is uniquely solvable for any $\dt>0$
since $(\frac{1}{\dt}+\kp  g(u^n,s^n))I-\eps^2\Delta_h$ is self-adjoint and positive definite,
which makes $u^{n+1}$ linearly determined from \eqref{lem_sav1_pf1}
and then $s^{n+1}$ computed explicitly  by \eqref{lem_sav1_pf2}.
If we take $\kp=0$ and $r^n=\expp{s^n}$, i.e., $s^n=\ln r^n$,
it is easy to verify that the scheme \eqref{eq_sav1stab}  gives us exactly the ESAV scheme \eqref{eq_esav1} with $\kappa=0$.
However, they differ when $\kappa>0$.

\subsubsection{Energy dissipation and MBP}
Now let us define a discrete energy as follows
\begin{equation}
\label{modified_energy}
\hE_h(u^n,s^n) := \frac{\eps^2}{2} \|\nabla_h u^n\|^2 + s^n,
\end{equation}
which is clearly again an approximation of the original discrete energy $E_h(u^n)$.
We first show that the  sESAV1 scheme \eqref{eq_sav1stab}  preserves the energy dissipation law and the MBP unccondtionally.
Then, as an application of both properties, we also prove the uniform boundedness of the variable coefficient $  g(u^n,s^n)$.

\begin{theorem}[Energy dissipation of sESAV1]
\label{thm_sav1_es}
For any $\kp\ge0$ and $\dt>0$, the sESAV1 scheme \eqref{eq_sav1stab} is energy dissipative
in the sense that $\hE_h(u^{n+1},s^{n+1})\le \hE_h(u^n,s^n)$.
\end{theorem}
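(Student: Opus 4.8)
The plan is to carry out the standard discrete energy estimate by pairing the update equation \eqref{eq_sav1staba} for $u$ against the backward difference $\delta_t u^{n+1}$, and then invoking the companion equation \eqref{eq_sav1stabb} for the scalar variable $s$ to absorb the nonlinear contribution exactly. Writing $g:=g(u^n,s^n)>0$ for brevity, I would first take the discrete inner product of \eqref{eq_sav1staba} with $\delta_t u^{n+1}$ to obtain
\[
\|\delta_t u^{n+1}\|^2 = \eps^2\<\Delta_h u^{n+1},\delta_t u^{n+1}\> + g\<f(u^n),\delta_t u^{n+1}\> - \kp g\<u^{n+1}-u^n,\delta_t u^{n+1}\>.
\]

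Next I would treat the three terms on the right separately. For the diffusion term, summation-by-parts gives $\<\Delta_h u^{n+1},\delta_t u^{n+1}\>=-\<\nabla_h u^{n+1},\nabla_h\delta_t u^{n+1}\>$, and the polarization identity $\<a,a-b\>=\tfrac12(\|a\|^2-\|b\|^2+\|a-b\|^2)$ applied with $a=\nabla_h u^{n+1}$, $b=\nabla_h u^n$ converts this into the telescoping difference of $\tfrac{\eps^2}{2}\|\nabla_h\cdot\|^2$ plus a non-negative remainder proportional to $\|\nabla_h u^{n+1}-\nabla_h u^n\|^2$. The crucial step is the nonlinear term: equation \eqref{eq_sav1stabb} is designed precisely so that $g\<f(u^n),\delta_t u^{n+1}\>=-\delta_t s^{n+1}$, whence this term is exactly $-(s^{n+1}-s^n)/\dt$ and supplies the telescoping difference in $s$. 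Finally, since $\<u^{n+1}-u^n,\delta_t u^{n+1}\>=\tfrac1\dt\|u^{n+1}-u^n\|^2\ge0$ while $\kp\ge0$ and $g>0$, the stabilization term is non-positive.

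Collecting everything and multiplying through by $\dt>0$, I expect to reach the exact identity
\[
\hE_h(u^{n+1},s^{n+1}) - \hE_h(u^n,s^n) = -\dt\|\delta_t u^{n+1}\|^2 - \tfrac{\eps^2}{2}\|\nabla_h u^{n+1}-\nabla_h u^n\|^2 - \kp g\|u^{n+1}-u^n\|^2,
\]
in which every term on the right is manifestly non-positive, which yields the claim. Honestly there is no serious obstacle here, as the dissipation is built into the SAV design; the only point requiring care is recognizing that the discretization of the nonlinear term $f(u^n)$ is identical in \eqref{eq_sav1staba} and \eqref{eq_sav1stabb}, which is exactly what makes the cancellation $g\<f(u^n),\delta_t u^{n+1}\>=-\delta_t s^{n+1}$ exact rather than merely approximate. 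Note also that the stabilization only contributes one additional non-positive term, so energy dissipation holds for every $\kp\ge0$ and every $\dt>0$ without a step-size restriction; the genuine role of $\kp$ surfaces only later, in establishing the MBP.
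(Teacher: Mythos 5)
Your proposal is correct and follows essentially the same route as the paper: testing \eqref{eq_sav1staba} against $\delta_t u^{n+1}$ is just the paper's pairing with $u^{n+1}-u^n$ rescaled by $1/\dt$, and your final identity coincides with the paper's after noting $\dt\|\delta_t u^{n+1}\|^2=\|u^{n+1}-u^n\|^2/\dt$. The treatment of the diffusion term via summation by parts and the exact cancellation of the nonlinear term through \eqref{eq_sav1stabb} are precisely the paper's argument.
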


\begin{proof}
Taking the inner product with \eqref{eq_sav1stab} by $u^{n+1}-u^n$ yields
\begin{equation}
\label{sav1_es_pf1}
\Big(\frac{1}{\dt}+\kp  g(u^n,s^n)\Big)\|u^{n+1}-u^n\|^2
= \eps^2\<\Delta_h u^{n+1},u^{n+1}-u^n\> +  g(u^n,s^n)\<f(u^n),u^{n+1}-u^n\>.
\end{equation}
Combining \eqref{sav1_es_pf1}, \eqref{eq_sav1stabb}, and the identity
\[
\<\Delta_hu^{n+1},u^{n+1}-u^n\>
=-\frac{1}{2}\|\nabla_hu^{n+1}\|^2+\frac{1}{2}\|\nabla_hu^n\|^2-\frac{1}{2}\|\nabla_hu^{n+1}-\nabla_hu^n\|^2,
\]
we obtain
\[
\hE_h(u^{n+1},s^{n+1}) - \hE_h(u^n,s^n)
= -\Big(\frac{1}{\dt}+\kp  g(u^n,s^n)\Big)\|u^{n+1}-u^n\|^2-\frac{\eps^2}{2}\|\nabla_hu^{n+1}-\nabla_hu^n\|^2,
\]
which completes the proof.
\end{proof}

Theorem \ref{thm_sav1_es} implies that the scheme \eqref{eq_sav1stab} is energy dissipative
with respect to the modified energy $\hE_h(u^n,s^n)$ rather than the original energy $E_h(u^n)$.
Note that $s^n\not=E_{1h}(u^n)$ for $n\ge 1$ in general,
and thus $\hE_h(u^n,s^n)\not=E_h(u^n)$.

\begin{corollary}
\label{cor_sav1_es}
For any $\kp\ge0$ and $\dt>0$, it holds $s^n\le E_h(\uinit)$ for all $n$.
\end{corollary}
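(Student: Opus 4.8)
The plan is to read this corollary off directly as a telescoping consequence of the energy-dissipation Theorem \ref{thm_sav1_es} combined with the way the scheme is initialized. First I would iterate the one-step inequality $\hE_h(u^{n+1},s^{n+1})\le\hE_h(u^n,s^n)$ furnished by Theorem \ref{thm_sav1_es} down to the initial level, obtaining $\hE_h(u^n,s^n)\le\hE_h(u^0,s^0)$ for every $n\ge0$.

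Next I would evaluate the right-hand side at the starting values. Since sESAV1 is started from $u^0=\uinit$ and $s^0=E_{1h}(u^0)=\<F(\uinit),1\>$, the definition \eqref{modified_energy} of the modified energy gives $\hE_h(u^0,s^0)=\frac{\eps^2}{2}\|\nabla_h\uinit\|^2+\<F(\uinit),1\>$, and by the definition \eqref{egydis} of the discrete energy this equals precisely $E_h(\uinit)$. Hence $\hE_h(u^n,s^n)\le E_h(\uinit)$ for all $n$.

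Finally I would discard the nonnegative gradient contribution. From \eqref{modified_energy} one has $s^n=\hE_h(u^n,s^n)-\frac{\eps^2}{2}\|\nabla_h u^n\|^2\le\hE_h(u^n,s^n)$ because $\frac{\eps^2}{2}\|\nabla_h u^n\|^2\ge0$; chaining this with the bound of the previous step yields $s^n\le E_h(\uinit)$, which is the claim.

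There is no genuine obstacle in this argument; it is a short corollary of the already-established dissipation property. The only point that requires care is the distinction emphasized in the remark following Theorem \ref{thm_sav1_es}: the modified energy $\hE_h$ and the true discrete energy $E_h$ agree only at $n=0$, since $s^n\ne E_{1h}(u^n)$ in general for $n\ge1$. Consequently the monotone chain must be run throughout against $\hE_h$, and the passage to $E_h(\uinit)$ is legitimate only at the initial step, where the identity $s^0=E_{1h}(u^0)$ forces $\hE_h(u^0,s^0)=E_h(\uinit)$.
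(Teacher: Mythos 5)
Your argument is correct and is essentially identical to the paper's proof: telescope the dissipation inequality from Theorem \ref{thm_sav1_es} down to $n=0$, use $s^0=E_{1h}(\uinit)$ to identify $\hE_h(u^0,s^0)$ with $E_h(\uinit)$, and drop the nonnegative gradient term. Your closing remark about why the identification with $E_h$ is only valid at the initial step is a correct and worthwhile clarification, consistent with the paper's own discussion after Theorem \ref{thm_sav1_es}.
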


\begin{proof}
By Theorem \ref{thm_sav1_es}, since $s^0=E_{1h}(\uinit)$, we have
\[
\frac{\eps^2}{2} \|\nabla_h u^n\|^2 + s^n = \hE_h(u^n,s^n) \le \hE_h(u^{n-1},s^{n-1})
\le \cdots \le \hE_h(u^0,s^0)= E_h(\uinit).
\]
Dropping off the nonnegative term leads to the expected result.
\end{proof}

\begin{theorem}[MBP of sESAV1]
\label{thm_sav1_mbp}
If $\kp\ge \|f'\|_{C[-\beta,\beta]}$,
the sESAV1 scheme \eqref{eq_sav1stab} preserves the MBP for $\{u^n\}$, i.e.,
the discrete version of \eqref{mbp} is valid as follows:
\begin{equation}
\label{dismbp}
\|\uinit\|_\infty \le \beta \quad \Longrightarrow \quad
\|u^n\|_\infty \le \beta, \quad \forall \, n.
\end{equation}
\end{theorem}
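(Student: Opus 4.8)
The plan is to argue by induction on $n$, using the equivalent form \eqref{eq_sav1var} of the scheme, and to exploit the fact that the stabilization term is exactly what makes the $L^\infty$ estimate close. The base case $n=0$ is immediate since $u^0=\uinit$ and $\|\uinit\|_\infty\le\beta$ by hypothesis. For the inductive step, assume $\|u^n\|_\infty\le\beta$; I would rewrite \eqref{lem_sav1_pf1} in the compact form
\begin{equation*}
\big(a I-\eps^2\Delta_h\big)u^{n+1}=\frac{u^n}{\dt}+g(u^n,s^n)\big(f(u^n)+\kp u^n\big),
\qquad a:=\frac{1}{\dt}+\kp g(u^n,s^n),
\end{equation*}
and note that $a>0$ because $g(u^n,s^n)>0$ by \eqref{gg}, so that $aI-\eps^2\Delta_h$ is invertible and $u^{n+1}=(aI-\eps^2\Delta_h)^{-1}\big[\cdots\big]$.

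Two ingredients then control the two factors separately. First, to bound the inverse operator in the $\|\cdot\|_\infty$-induced norm, I would apply Lemma \ref{lem_lapdiff} after rescaling: since $aI-\eps^2\Delta_h=\eps^2\big((a/\eps^2)I-\Delta_h\big)$, the lemma (with parameter $a/\eps^2>0$) gives $\|(aI-\eps^2\Delta_h)^{-1}\|_\infty\le a^{-1}$. Second, to bound the right-hand side, I would use the inductive hypothesis together with Lemma \ref{lem_nonlinear}: because $\kp\ge\|f'\|_{C[-\beta,\beta]}$ and $\|u^n\|_\infty\le\beta$, Lemma \ref{lem_nonlinear} yields the pointwise estimate $\|f(u^n)+\kp u^n\|_\infty\le\kp\beta$, while the remaining term obeys $\|u^n/\dt\|_\infty\le\beta/\dt$.

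Combining these, the right-hand side is bounded by
\begin{equation*}
\Big\|\frac{u^n}{\dt}+g(u^n,s^n)\big(f(u^n)+\kp u^n\big)\Big\|_\infty
\le \frac{\beta}{\dt}+g(u^n,s^n)\,\kp\beta
=\beta\Big(\frac{1}{\dt}+\kp g(u^n,s^n)\Big)=a\beta,
\end{equation*}
so that $\|u^{n+1}\|_\infty\le a^{-1}\cdot a\beta=\beta$, which closes the induction and establishes \eqref{dismbp}. The main point — and the step I expect to carry the weight of the argument — is that the variable coefficient $g(u^n,s^n)$ is not constant but is guaranteed positive, and that the stabilizing factor $\kp g(u^n,s^n)$ appears simultaneously in the operator (through $a$) and in the reaction term (through $f+\kp u$, whose magnitude is pinned to $\kp\beta$ by Lemma \ref{lem_nonlinear}). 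These two occurrences of $a$ cancel exactly, so the bound is uniform in the otherwise uncontrolled value of $g(u^n,s^n)$ and in $\dt$; this is precisely why stabilization ($\kp>0$) is essential and why the unstabilized schemes fail to preserve the MBP.
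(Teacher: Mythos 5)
Your proposal is correct and follows essentially the same route as the paper's proof: invert the stabilized linear operator in \eqref{lem_sav1_pf1}, bound $\|(aI-\eps^2\Delta_h)^{-1}\|_\infty\le a^{-1}$ via Lemma \ref{lem_lapdiff}, bound the right-hand side by $a\beta$ using the triangle inequality and Lemma \ref{lem_nonlinear}, and close the induction. The only differences are expository (you make the rescaling in Lemma \ref{lem_lapdiff} and the triangle-inequality split explicit, which the paper compresses into \eqref{thm_sav1_mbp_pf}); note also that, per Remark \ref{rmk_sav1_mbp_pf}, $\kp>0$ is essential only for \emph{unconditional} MBP preservation, since $\kp=0$ still works under a time-step restriction.
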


\begin{proof}
Suppose $(u^n,s^n)$ is given and $\|u^n\|_\infty\le\beta$ for some $n$.
From \eqref{lem_sav1_pf1}, we have
\[
u^{n+1} = \Big[\Big(\frac{1}{\dt}+\kp  g(u^n,s^n)\Big)I-\eps^2\Delta_h\Big]^{-1}
\Big[\frac{1}{\dt}u^n +  g(u^n,s^n)(f(u^n) + \kp u^n) \Big].
\]
Since $  g(u^n,s^n)>0$, by Lemma \ref{lem_lapdiff}, we have
\[
\Big\|\Big[\Big(\frac{1}{\dt}+\kp  g(u^n,s^n)\Big)I-\eps^2\Delta_h\Big]^{-1}\Big\|_\infty
\le \Big(\frac{1}{\dt}+\kp  g(u^n,s^n)\Big)^{-1}.
\]
Since $\kp\ge \|f'\|_{C[-\beta,\beta]}$ and $\|u^n\|_\infty\le\beta$,
according to Lemma \ref{lem_nonlinear}, it holds
\begin{equation}
\label{thm_sav1_mbp_pf}
\Big\|\frac{1}{\dt}u^n +  g(u^n,s^n)(f(u^n) + \kp u^n)\Big\|_\infty \le \Big(\frac{1}{\dt}+\kp  g(u^n,s^n)\Big) \beta.
\end{equation}
Therefore, we obtain
\[
\|u^{n+1}\|_\infty
\le \Big(\frac{1}{\dt}+\kp  g(u^n,s^n)\Big)^{-1}  \Big(\frac{1}{\dt}+\kp  g(u^n,s^n)\Big) \beta = \beta.
\]
By induction, we have $\|u^n\|_\infty\le\beta$ for all $n$.
\end{proof}

\begin{remark}
\label{rmk_sav1_mbp_pf}
The inequality \eqref{thm_sav1_mbp_pf} is valid if  $(\dt  g(u^n,s^n))^{-1}+\kp\ge \|f'\|_{C[-\beta,\beta]}$.
In other words, when $\kp=0$ (no stabilization), the MBP still holds for the sESAV1 scheme if the time step size  satisfies
$\dt \le (  g(u^n,s^n)\|f'\|_{C[-\beta,\beta]})^{-1}$ for all $n$.
\end{remark}

\begin{remark}
\label{rmk_sav1_mbp}
For the sESAV1 scheme \eqref{eq_sav1stab},
we know that the extra term $-\kp  g(u^n,s^n)(u^{n+1}-u^n)$ stabilizes the time stepping
and $\kp  g(u^n,s^n)$ is indeed the stabilizing constant, which is an $n$-dependent quantity.
In the proof of Theorem \ref{thm_sav1_mbp},
the key ingredients to preserve the MBP for $\{u^n\}$ involve two aspects:
the positivity of $\kp  g(u^n,s^n)$ and the relation of $u^{n+1}$ and $u^n$.
The former implies that the extra term is really a good stabilization term
and the latter guarantees the balance between the linear and nonlinear parts
so that the stabilized linear operator is sufficient to dominate the nonlinear term in order to preserve the MBP.
\end{remark}

\begin{remark}
\label{rmk_sav1_nonmbp}
For the classic SAV1 scheme \eqref{eq_sav1shen},
the stabilization term in \eqref{eq_sav1shena} actually takes the form
\[
- \kp u^{n+1} + \frac{r^{n+1}}{\sqrt{E_{2h}(u^n)+\delta}}\kp u^n.
\]
The sign of $r^{n+1}$, and thus the sign of $\frac{r^{n+1}}{\sqrt{E_{2h}(u^n)+\delta}}\kp$, is uncertain,
which violates the positivity of the stabilizing constant.
Even though $r^{n+1}$ may be positive in practical computations in some specific cases,
such a stabilization term leads to an imbalance between the linear and nonlinear parts
since $r^{n+1}\not=\sqrt{E_{2h}(u^n)+\delta}$ for $n\ge0$ in general,
so the scheme \eqref{eq_sav1shen} cannot preserve the MBP theoretically,
which will be also observed later in our numerical experiments.
Similarly, the stabilization term in the ESAV scheme \eqref{eq_esav1a} reads as
\[
- \kp u^{n+1} + \frac{r^n}{\expp{E_{2h}(u^n)}}\kp u^n,
\]
and the imbalance also exists between the linear and nonlinear parts
since $r^n\not=\expp{E_{2h}(u^n)}$ for $n\ge1$ in general,
and thus the ESAV scheme \eqref{eq_esav1} also does not preserve the MBP theoretically.
Nevertheless, $r^n>0$ always holds due to the definition of the auxiliary variable,
and this is the reason why we consider the ESAV approach rather than the classic one in this work.
\end{remark}

Note that the coefficient $  g(u^n,s^n)$ may vary step-by-step,
which is different from the continuous case that $g(u,s)\equiv 1$ exactly.
Fortunately, the change of $  g(u^n,s^n)$ is controllable in the sense that
it can be bounded by some constants, which is illustrated in the following.

\begin{corollary}
\label{cor_g_upbound}
If $h\le h_0$, $\kp\ge \|f'\|_{C[-\beta,\beta]}$, and $\|\uinit\|_\infty\le\beta$,
then there exists a constant $G^*=G^*(\uinit,C_*)$ such that $0<  g(u^n,s^n)\le G^*$ for all $n$.
\end{corollary}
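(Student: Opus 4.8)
The plan is to exploit the multiplicative structure of $g$ in \eqref{gg}: writing $g(u^n,s^n)=\expp{s^n-E_{1h}(u^n)}$ makes the positivity $g(u^n,s^n)>0$ immediate and reduces the whole statement to a uniform upper bound on the exponent $s^n-E_{1h}(u^n)$. I would split this into two independent estimates, an upper bound on $s^n$ and a lower bound on $E_{1h}(u^n)$, and the key point I expect to use is that these two pieces are exactly controlled by the two properties already established for the scheme: the modified energy dissipation bounds $s^n$ from above, while the MBP bounds $E_{1h}(u^n)$ from below.

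For the upper bound on $s^n$, I would invoke Corollary \ref{cor_sav1_es}, which gives $s^n\le E_h(\uinit)$ for all $n$. Since $h\le h_0$, the first estimate in \eqref{uinit_approx} then yields $E_h(\uinit)\le E(\uinit)+1$, so $s^n\le E(\uinit)+1$ uniformly in $n$.

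For the lower bound on $E_{1h}(u^n)$, the hypotheses $\kp\ge\|f'\|_{C[-\beta,\beta]}$ and $\|\uinit\|_\infty\le\beta$ put me precisely in the setting of Theorem \ref{thm_sav1_mbp}, which guarantees $\|u^n\|_\infty\le\beta$ for every $n$. Since $f$ is continuously differentiable, $F$ is continuous and hence bounded below on the compact interval $[-\beta,\beta]$; writing $E_{1h}(u^n)=\<F(u^n),1\>=h^2\sum_{i,j}F(u^n_{ij})$ and using that each nodal value $u^n_{ij}$ lies in $[-\beta,\beta]$, I would conclude $E_{1h}(u^n)\ge L^2\min_{\xi\in[-\beta,\beta]}F(\xi)\ge -C_*$, the same lower bound that defines $C_*$ for the continuous functional $E_1$. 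Combining the two estimates gives $s^n-E_{1h}(u^n)\le E(\uinit)+1+C_*$, so that
\[
0<g(u^n,s^n)\le\expp{E(\uinit)+1+C_*}=:G^*,
\]
which depends only on $\uinit$ (through $E(\uinit)$) and on $C_*$, as required.

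The only step needing mild care is the passage from the continuous lower bound on $E_1$ to the discrete one on $E_{1h}$, and this is exactly where the MBP is indispensable: without the confinement $\|u^n\|_\infty\le\beta$ there would be no control on the denominator $\expp{E_{1h}(u^n)}$, and $g(u^n,s^n)$ could blow up. Beyond this point no genuine obstacle arises; once the MBP (Theorem \ref{thm_sav1_mbp}) and the energy dissipation (Corollary \ref{cor_sav1_es}) are available, the corollary is a direct assembly of these facts through the identity $g=\expp{s^n-E_{1h}(u^n)}$.
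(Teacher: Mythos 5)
Your argument is correct and follows essentially the same route as the paper's own proof: positivity from the definition of $g$, the upper bound $s^n\le E_h(\uinit)\le E(\uinit)+1$ from Corollary \ref{cor_sav1_es} and \eqref{uinit_approx}, and the lower bound $E_{1h}(u^n)\ge -C_*$ from the MBP of Theorem \ref{thm_sav1_mbp} together with the boundedness of $F$ on $[-\beta,\beta]$. Your explicit justification of the discrete lower bound $E_{1h}(u^n)\ge -C_*$ is a detail the paper leaves implicit, but it is exactly the intended reasoning.
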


\begin{proof}
The positivity of $g(u^n,s^n)$ comes from its definition.
We know from Corollary \ref{cor_sav1_es} and Theorem \ref{thm_sav1_mbp}
that $  g(u^n,s^n)\le\expp{E_h(\uinit)+C_*}$ for all $n$.
The $h$-dependence of upper bound can be removed by \eqref{uinit_approx},
which completes the proof.
\end{proof}

Actually, it also holds that $  g(u^n,s^n)$ has a positive lower bound uniformly in $n$ for any fixed terminal time $T>0$.
To show it, we first prove an estimate on the discrete $H^2$ semi-norm of the numerical solution.

\begin{lemma}
\label{lem_sav1_h2bound}
Given a fixed  time $T>0$.
If $h\le h_0$, $\kp\ge \|f'\|_{C[-\beta,\beta]}$, and $\|\uinit\|_\infty\le\beta$,
there exists a constant $M>0$ depending on $C_*$, $|\Omega|$, $T$, $\uinit$, $\kp$, $\eps$, and $\|f\|_{C^1[-\beta,\beta]}$, such that
\[
\|\delta_t u^{n+1}\| + \|\Delta_h u^{n+1}\|\le M, \quad 0 \le n \le \lfloor T/\dt\rfloor-1.
\]
\end{lemma}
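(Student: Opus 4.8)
The plan is to first bound the discrete time derivative $\|\delta_t u^{n+1}\|$ uniformly in $n$ and then recover the $H^2$ semi-norm bound purely algebraically. Indeed, rearranging \eqref{eq_sav1staba} and using $u^{n+1}-u^n=\dt\,\delta_t u^{n+1}$ gives
\[
\eps^2\Delta_h u^{n+1} = \big(1+\kp\dt\,g(u^n,s^n)\big)\delta_t u^{n+1} - g(u^n,s^n)f(u^n),
\]
so that once $\|\delta_t u^{n+1}\|\le M_1$ is established, Corollary \ref{cor_g_upbound} (giving $0<g(u^n,s^n)\le G^*$), the MBP of Theorem \ref{thm_sav1_mbp} (giving $\|f(u^n)\|\le\sqrt{|\Omega|}\,\|f\|_{C[-\beta,\beta]}$), and the restriction $\dt\le T$ immediately yield a bound on $\|\Delta_h u^{n+1}\|$. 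Everything therefore reduces to controlling $w^{n+1}:=\delta_t u^{n+1}$.

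To initialize the estimate I would bound $\|w^1\|$ from the first step of \eqref{eq_sav1var}. Writing $\Delta_h u^1=\Delta_h\uinit+\dt\,\Delta_h w^1$, testing the scheme at $n=0$ against $w^1$, and discarding the resulting nonpositive diffusion term $-\eps^2\dt\|\nabla_h w^1\|^2$ (after summation-by-parts), a Cauchy--Schwarz estimate gives $\|w^1\|\le \eps^2\|\Delta_h\uinit\|+G^*\sqrt{|\Omega|}\,\|f\|_{C[-\beta,\beta]}$, which is finite by the initial regularity assumption \eqref{uinit_approx}.

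For the propagation I would difference the scheme \eqref{eq_sav1staba} at two consecutive levels. Writing $g^n:=g(u^n,s^n)$ and dividing by $\dt$ produces an evolution equation for $w^{n+1}$,
\[
\delta_t w^{n+1} = \eps^2\Delta_h w^{n+1} + \tfrac{1}{\dt}\big(g^n f(u^n)-g^{n-1}f(u^{n-1})\big) - \kp\big(g^n w^{n+1}-g^{n-1}w^n\big).
\]
Testing against $w^{n+1}$, using $\<\delta_t w^{n+1},w^{n+1}\>\ge\frac{1}{2\dt}(\|w^{n+1}\|^2-\|w^n\|^2)$ and the negativity of $\<\Delta_h w^{n+1},w^{n+1}\>$, together with Young's inequality on the stabilization and forcing contributions (bounding $g^{n-1},g^n$ by $G^*$), I expect to arrive at a discrete Gronwall inequality $\|w^{n+1}\|^2-\|w^n\|^2\le C\dt\,(\|w^n\|^2+\|w^{n+1}\|^2)$.

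The crux, and the main obstacle, is bounding the forcing term $\frac{1}{\dt}\big(g^n f(u^n)-g^{n-1}f(u^{n-1})\big)$ by $C\|w^n\|$. Splitting it as $g^n\frac{f(u^n)-f(u^{n-1})}{\dt}+\frac{g^n-g^{n-1}}{\dt}f(u^{n-1})$, the first piece is controlled by $G^*\|f'\|_{C[-\beta,\beta]}\|w^n\|$ via the MBP and the mean value theorem. Unlike in a constant-coefficient scheme, however, the coefficient $g^n$ itself varies with $n$, and I must show its increment is $O(\dt\|w^n\|)$. Here the exponential form \eqref{gg}, $g=\e^{s-E_{1h}(u)}$, is essential: since $|\e^a-\e^b|\le\max(\e^a,\e^b)\,|a-b|$ and both exponents are $\le\ln G^*$ by Corollary \ref{cor_g_upbound}, one gets $|g^n-g^{n-1}|\le G^*\big(|s^n-s^{n-1}|+|E_{1h}(u^n)-E_{1h}(u^{n-1})|\big)$, and then \eqref{eq_sav1stabb}, the MBP, and a mean value estimate on $E_{1h}$ bound both increments by $C\dt\|w^n\|$. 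With this in hand, a standard discrete Gronwall argument (valid for $\dt$ sufficiently small, which is harmless for the convergence analysis and unavoidable since no positive lower bound on $g^n$ is yet available) propagates the bound on $\|w^1\|$ to $\|w^{n+1}\|\le M_1$ for all $n$ with $(n+1)\dt\le T$, completing the estimate.
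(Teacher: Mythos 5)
Your argument is correct in all its essentials, but it runs in the opposite direction from the paper's proof. The paper bounds $\|\Delta_h u^{n+1}\|$ \emph{first}: it tests \eqref{eq_sav1staba} with $2\dt\Delta_h^2u^{n+1}$, absorbs the cross term into $2\eps^2\dt\|\nabla_h\Delta_h u^{n+1}\|^2$, controls $\|\nabla_h f(u^n)\|$ by $\|f'\|_{C[-\beta,\beta]}C_\Omega\|\Delta_h u^n\|$ via the MBP and the discrete Poincar\'e inequality, and obtains the one-sided recursion $\|\Delta_h u^{n+1}\|^2\le(1+C\dt)\|\Delta_h u^n\|^2$; the bound on $\|\delta_t u^{n+1}\|$ is then read off algebraically from the scheme. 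You instead bound $w^{n+1}=\delta_t u^{n+1}$ first by differencing consecutive time levels and running a Gronwall argument, then recover $\|\Delta_h u^{n+1}\|$ algebraically. Your route trades the Poincar\'e inequality for the Lipschitz-type increment estimate $|g^n-g^{n-1}|\le G^*(|s^n-s^{n-1}|+|E_{1h}(u^n)-E_{1h}(u^{n-1})|)\le C\dt\|w^n\|$, which is essentially the same computation as in Lemma \ref{lem_sav1_nonlinear} and is correctly justified here by Corollary \ref{cor_g_upbound}, \eqref{eq_sav1stabb}, and the MBP; there is no circularity since you never invoke the lower bound $G_*$. Your initialization of $\|w^1\|$ from the $n=0$ step is also sound.

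One caveat: because your recursion has $\|w^{n+1}\|^2$ on the right-hand side, your Gronwall step needs $\dt\le\dt_0$ for some $\dt_0$ depending on the constants, whereas the lemma as stated (and the paper's proof, whose recursion is one-sided in $\|\Delta_h u^{n}\|^2$) holds for every $\dt>0$. This is not fatal --- for $\dt\ge\dt_0$ the claimed bound follows trivially from the MBP, since $\|w^{n+1}\|=\dt^{-1}\|u^{n+1}-u^n\|\le 2\beta|\Omega|^{1/2}/\dt_0$ and $\eps^2\|\Delta_h u^{n+1}\|\le(1+\kp G^*\dt)\|w^{n+1}\|+G^*F_0|\Omega|^{1/2}$ is then also controlled --- but you should either supply that two-line patch or weaken the statement, rather than assert that the restriction is unavoidable.
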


\begin{proof}
Taking the discrete inner product of \eqref{eq_sav1staba} with $2\dt\Delta_h^2u^{n+1}$,
we obtain
\begin{align*}
& (1+\kp  g(u^n,s^n)\dt) \<\Delta_h u^{n+1} - \Delta_h u^n, 2\Delta_hu^{n+1}\>
+ 2\eps^2\dt \|\nabla_h\Delta_h u^{n+1}\|^2 \\
& \qquad\quad = - 2  g(u^n,s^n) \dt\<\nabla_h f(u^n),\nabla_h\Delta_h u^{n+1}\>.
\end{align*}
Using the facts that
\begin{align*}
\<\Delta_h u^{n+1} - \Delta_h u^n, 2\Delta_hu^{n+1}\>
& = \|\Delta_h u^{n+1}\|^2 - \|\Delta_h u^n\|^2 + \|\Delta_h u^{n+1} - \Delta_h u^n\|^2, \\
- 2  g(u^n,s^n)\dt \<\nabla_h f(u^n),\nabla_h\Delta_h u^{n+1}\>
& \le \frac{(  g(u^n,s^n))^2}{2\eps^2}\dt \|\nabla_h f(u^n)\|^2 + 2\eps^2\dt\|\nabla_h\Delta_h u^{n+1}\|^2,
\end{align*}
we obtain
\begin{equation}
\label{111}
(1+\kp  g(u^n,s^n)\dt) (\|\Delta_h u^{n+1}\|^2 - \|\Delta_h u^n\|^2)
\le \frac{(  g(u^n,s^n))^2}{2\eps^2}\dt \|\nabla_h f(u^n)\|^2.
\end{equation}
By Theorem \ref{thm_sav1_mbp},
we have $\|u^n\|_\infty\le\beta$, and thus,
\begin{equation}
\label{222}
\|\nabla_h f(u^n)\| \le \|f'\|_{C[-\beta,\beta]} \|\nabla_h u^n\|
\le \|f'\|_{C[-\beta,\beta]} C_\Omega \|\Delta_h u^n\|,
\end{equation}
where  the second step comes from the discrete Poincar\'e's inequality
with $C_\Omega$ being a constant depending only on $|\Omega|$ (since
$\nabla_h u^n$ has a zero mean due to the periodic boundary condition).
Then, by Corollary \ref{cor_g_upbound}, \eqref{111} and \eqref{222}, we obtain
\begin{align}
\label{lem_sav1_h2bound_pf}
\|\Delta_h u^{n+1}\|^2 &\le (1+\kp  g(u^n,s^n)\dt) \|\Delta_h u^{n+1}\|^2\nn\\
&\le \Big[1+\Big(\kp G^*+\frac{(G^* \|f'\|_{C[-\beta,\beta]} C_\Omega)^2}{2\eps^2}\Big)\dt\Big]  \|\Delta_h u^n\|^2.
\end{align}
By recursion, we obtain
\begin{align*}
\|\Delta_h u^{n+1}\|^2 &\le \Big[1+\Big(\kp G^*+\frac{(G^* \|f'\|_{C[-\beta,\beta]} C_\Omega)^2}{2\eps^2}\Big)\dt\Big]^{n+1} \|\Delta_h u^0\|^2\\
&\le \e^{\big(\kp G^*+\frac{(G^* \|f'\|_{C[-\beta,\beta]} C_\Omega)^2}{2\eps^2}\big)T} \|\Delta_h \uinit\|^2.
\end{align*}
Then, using Corollary \ref{cor_g_upbound} again,
we derive from \eqref{eq_sav1staba} directly to get
\begin{align*}
\|\delta_t u^{n+1}\| &\le (1+\kp  g(u^n,s^n)\dt)\|\delta_t u^{n+1}\|\\
&\le \eps^2 \|\Delta_h u^{n+1}\| +  g(u^n,s^n)\|f(u^n)\|
\le \eps^2 \|\Delta_h u^{n+1}\| + G^* F_0 |\Omega|^{\frac{1}{2}},
\end{align*}
where $F_0:=\|f\|_{C[-\beta,\beta]}$.
This completes the proof.
\end{proof}

\begin{corollary}
\label{cor_sav1_g_bound}
Given a fixed time $T>0$.
If $h\le h_0$, $\kp\ge \|f'\|_{C[-\beta,\beta]}$, and $\|\uinit\|_\infty\le\beta$,
there exists a constant $G_*>0$
such that $  g(u^n,s^n)\ge G_*$ for $0\le n\le \lfloor T/\dt \rfloor$,
where $G_*$ depends on $C_*$, $|\Omega|$, $T$, $\uinit$, $\kp$, $\eps$, and $\|f\|_{C^1[-\beta,\beta]}$.
\end{corollary}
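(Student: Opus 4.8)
The plan is to exploit the explicit form $g(u^n,s^n)=\expp{s^n-E_{1h}(u^n)}$, which reduces the task to bounding the scalar quantity $d^n:=s^n-E_{1h}(u^n)$ from below uniformly over $0\le n\le\lfloor T/\dt\rfloor$. Note first that $d^0=s^0-E_{1h}(u^0)=0$ by the initialization $s^0=E_{1h}(u^0)$, so it suffices to control how much $d^n$ can \emph{decrease} along the iteration. First I would derive a recursion for $d^{n+1}-d^n$. The increment of $s^n$ is read off directly from the scheme \eqref{lem_sav1_pf2}, namely $s^{n+1}-s^n=-g(u^n,s^n)\<f(u^n),u^{n+1}-u^n\>$. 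For the increment of $E_{1h}$, I would apply a pointwise second-order Taylor expansion of $F$ about $u^n_{ij}$ and sum with the $h^2$ weights; since $F'=-f$, this yields
\begin{equation*}
E_{1h}(u^{n+1})-E_{1h}(u^n)=-\<f(u^n),u^{n+1}-u^n\>+\frac{1}{2}h^2\sum_{i,j}F''(\xi_{ij})(u^{n+1}_{ij}-u^n_{ij})^2,
\end{equation*}
with each $\xi_{ij}$ lying between $u^n_{ij}$ and $u^{n+1}_{ij}$. The MBP (Theorem \ref{thm_sav1_mbp}) guarantees $\|u^n\|_\infty,\|u^{n+1}\|_\infty\le\beta$, so $\xi_{ij}\in[-\beta,\beta]$ and the remainder is controlled by $\frac{1}{2}\|f'\|_{C[-\beta,\beta]}\|u^{n+1}-u^n\|^2$.

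Subtracting the two increments, the leading $\<f(u^n),u^{n+1}-u^n\>$ contributions combine into a single factor $1-g(u^n,s^n)$, giving
\begin{equation*}
d^{n+1}-d^n=(1-g(u^n,s^n))\<f(u^n),u^{n+1}-u^n\>-\frac{1}{2}h^2\sum_{i,j}F''(\xi_{ij})(u^{n+1}_{ij}-u^n_{ij})^2.
\end{equation*}
Next I would bound each term from below using the three ingredients already in hand: the MBP yields $\|f(u^n)\|\le F_0|\Omega|^{1/2}$ with $F_0=\|f\|_{C[-\beta,\beta]}$; the upper bound $0<g(u^n,s^n)\le G^*$ from Corollary \ref{cor_g_upbound} gives $|1-g(u^n,s^n)|\le 1+G^*$; and Lemma \ref{lem_sav1_h2bound} gives $\|u^{n+1}-u^n\|=\dt\|\delta_t u^{n+1}\|\le\dt M$. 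Together these produce the per-step estimate
\begin{equation*}
d^{n+1}-d^n\ge -(1+G^*)F_0|\Omega|^{1/2}M\dt-\frac{1}{2}\|f'\|_{C[-\beta,\beta]}M^2\dt^2.
\end{equation*}

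I would then telescope from $0$ to $n-1$. Using $n\dt\le T$ (and, in the only nontrivial case $\lfloor T/\dt\rfloor\ge1$, the consequence $\dt\le T$ to absorb the quadratic term), I conclude $d^n\ge -C$ with $C$ depending on $G^*$, $F_0$, $|\Omega|$, $M$, $T$, and $\|f'\|_{C[-\beta,\beta]}$. Exponentiating gives $g(u^n,s^n)=\expp{d^n}\ge\expp{-C}=:G_*>0$, and tracing $G^*$ and $M$ back to their stated dependencies furnishes the claimed dependence of $G_*$ on $C_*$, $|\Omega|$, $T$, $\uinit$, $\kp$, $\eps$, and $\|f\|_{C^1[-\beta,\beta]}$.

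The main obstacle I anticipate is keeping the telescoped sum bounded \emph{independently of} $\dt$. The first-term contribution carries a single factor $\dt$ that must cancel against the step count $\le T/\dt$ to leave an $O(1)$ total; this cancellation hinges entirely on $\|u^{n+1}-u^n\|$ being genuinely $O(\dt)$ rather than $O(1)$, i.e. on the $H^2$-type estimate of Lemma \ref{lem_sav1_h2bound}. The quadratic remainder carries an extra factor $\dt$ and is harmless, so the crux is that the deviation $1-g(u^n,s^n)$ from the continuous value $g\equiv1$ stays uniformly controlled via the already-established two-sided bounds on $g$.
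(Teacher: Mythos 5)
Your argument is correct, and it rests on exactly the same pillars as the paper's: the MBP, the upper bound $g(u^n,s^n)\le G^*$ from Corollary \ref{cor_g_upbound}, the bound $\|\delta_t u^{n+1}\|\le M$ from Lemma \ref{lem_sav1_h2bound}, and a telescoping over at most $T/\dt$ steps of an $O(\dt)$ per-step loss. The difference is in the bookkeeping. The paper splits $g(u^n,s^n)=\expp{s^n}/\expp{E_{1h}(u^n)}$ and handles the two factors separately: the MBP bounds $E_{1h}(u^n)$ from above outright (no evolution argument needed for that piece), so it only remains to bound $s^n$ from below, which follows in one line from \eqref{lem_sav1_pf2} via $|\<f(u^n),u^{n+1}-u^n\>|\le F_0|\Omega|^{1/2}M\dt$ and recursion from $s^0\ge -C_*$. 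You instead track the combined exponent $d^n=s^n-E_{1h}(u^n)$, which forces you to Taylor-expand $E_{1h}(u^{n+1})-E_{1h}(u^n)$ and observe the near-cancellation of the $\<f(u^n),u^{n+1}-u^n\>$ contributions into a factor $1-g(u^n,s^n)$. This is more work for the same conclusion, but it is not wasted insight: your computation makes explicit that the per-step drift of $d^n$ (equivalently, the deviation of $g(u^n,s^n)$ from its continuum value $1$) is driven by $|1-g(u^n,s^n)|\cdot O(\dt)+O(\dt^2)$, i.e., it quantifies the consistency of the auxiliary variable with $E_{1h}(u^n)$ rather than merely bounding the two pieces independently. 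Both routes yield the same constant dependencies; the paper's is shorter because it never needs the second-order remainder or the cancellation.
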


\begin{proof}
According to the definition  of $g(u^n,s^n)$ in \eqref{gg} and the MBP for $\{u^n\}$,
it suffices to show the existence of the lower bound of $\{s^n\}$.
Using Lemma \ref{lem_sav1_h2bound}, we have
\[
\<f(u^n), u^{n+1}-u^n\> \le \dt \|f(u^n)\| \|\delta_t u^{n+1}\| \le F_0 |\Omega|^{\frac{1}{2}} M \dt,
\]
where $M$ is the constant defined  in Lemma \ref{lem_sav1_h2bound}.
Then, from \eqref{lem_sav1_pf2}, we have
\[
s^{n+1} \ge s^n - G^* F_0 |\Omega|^{\frac{1}{2}} M \dt.
\]
By recursion, noting that $s^0=E_{1h}(\uinit)\ge-C_*$, we obtain
\begin{equation*}
s^{n} \ge s^0 - G^* F_0 |\Omega|^{\frac{1}{2}} M n\dt \ge - C_* - G^* F_0 |\Omega|^{\frac{1}{2}} MT,
\end{equation*}
which completes the proof.
\end{proof}

The combination of Corollaries \ref{cor_g_upbound} and \ref{cor_sav1_g_bound} implies that
$0<G_*\le g(u^n,s^n) \le G^*$ for any fixed terminal time $T>0$, which will play an important role in
error estimates of the sESAV1 scheme \eqref{eq_sav1stab} in the next subsection.

\subsubsection{Error estimates}

In the following error analysis, as well as that for the second-order scheme presented later,
we will use many generic constants, and for simplicity of notations,
we may denote the constants with the same dependence but different values by the same notation.

If the exact solution $u_e$ to \eqref{AllenCahn} is smooth sufficiently,
letting $s_e(t)=E_1(u_e(t))$, we have
\begin{subequations}
\label{sav1trun}
\begin{align}
\frac{u_e(t_{n+1})-u_e(t_n)}{\dt}
& = \eps^2 \Delta_h u_e(t_{n+1}) + g(u_e(t_n),s_e(t_n)) f(u_e(t_n)) \nn \\
& \quad - \kp g(u_e(t_n),s_e(t_n)) (u_e(t_{n+1})-u_e(t_n)) + R_{1u}^n, \label{sav1truna} \\
\frac{s_e(t_{n+1})-s_e(t_n)}{\dt}
& = - g(u_e(t_n),s_e(t_n)) \Big\<f(u_e(t_n)),\frac{u_e(t_{n+1})-u_e(t_n)}{\dt}\Big\>
+ R_{1s}^n, \label{sav1trunb}
\end{align}
\end{subequations}
where the truncation errors $R_{1u}^n$ and $R_{1s}^n$ satisfy
\begin{equation}
\label{sav1trunerr}
\|R_{1u}^n\|\le C_e(\dt+h^2), \qquad |R_{1s}^n|\le C_e(\dt+h^2)
\end{equation}
with $C_e>0$ depending only on $u_e$, $\kp$, $\eps$, and $\|f\|_{C^1[-\beta,\beta]}$.
Define the error functions as
\begin{equation}
\label{errorfuns}
e_u^n = u^n - u_e(t_n), \qquad e_s^n = s^n - s_e(t_n).
\end{equation}
We first show a lemma on the error estimate for the nonlinear term.

\begin{lemma}
\label{lem_sav1_nonlinear}
If $h\le h_0$ and $\|u^n\|_\infty\le\beta$, we have
\begin{equation}
\label{lem_sav1_nonlinear1}
| g(u^n,s^n) - g(u_e(t_n),s_e(t_n)) | \le  C_{g} (\|e_u^n\| + |e_s^n|),
\end{equation}
and
\begin{equation}
\label{lem_sav1_nonlinear2}
\| g(u^n,s^n) f(u^n) - g(u_e(t_n),s_e(t_n)) f(u_e(t_n)) \| \le  C_{g} (\|e_u^n\| + |e_s^n|),
\end{equation}
where the constant $ C_{g} >0$ depends on $C_*$, $|\Omega|$, $\uinit$, and $\|f\|_{C^1[-\beta,\beta]}$.
\end{lemma}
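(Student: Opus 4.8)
The plan is to exploit the explicit exponential form $g(u,s)=\expp{s-E_{1h}(u)}$, which reduces both estimates to Lipschitz-type bounds for the exponential, for $F$ (recalling that $F'=-f$), and for $f$ itself, all restricted to the interval $[-\beta,\beta]$ on which the maximum bound principle keeps every argument.

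First I would write, with $a_n:=s^n-E_{1h}(u^n)$ and $b_n:=s_e(t_n)-E_{1h}(u_e(t_n))$,
\[
g(u^n,s^n)-g(u_e(t_n),s_e(t_n)) = \expp{a_n}-\expp{b_n},
\]
and apply the mean value theorem to obtain $|\expp{a_n}-\expp{b_n}|\le \expp{\xi_n}\,|a_n-b_n|$ for some $\xi_n$ between $a_n$ and $b_n$. Since $\expp{\xi_n}\le\max\{g(u^n,s^n),\,g(u_e(t_n),s_e(t_n))\}$, the prefactor is bounded uniformly in $n$ and $h$: the bound $g(u^n,s^n)\le G^*$ is Corollary \ref{cor_g_upbound} (equivalently, it follows directly from $\|u^n\|_\infty\le\beta$, from $s^n\le E_h(\uinit)$ in Corollary \ref{cor_sav1_es}, and from \eqref{uinit_approx} to remove the $h$-dependence), while the same type of bound for $g(u_e(t_n),s_e(t_n))$ follows from the continuous MBP \eqref{mbp} together with $s_e=E_1(u_e)$. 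Next I would estimate $|a_n-b_n|\le|e_s^n|+|E_{1h}(u^n)-E_{1h}(u_e(t_n))|$; writing the last difference as $\<F(u^n)-F(u_e(t_n)),1\>$ and using $F'=-f$ with both arguments in $[-\beta,\beta]$, a pointwise bound $|F(u^n)-F(u_e(t_n))|\le F_0\,|e_u^n|$ followed by Cauchy--Schwarz gives $|E_{1h}(u^n)-E_{1h}(u_e(t_n))|\le F_0|\Omega|^{\frac12}\|e_u^n\|$. Collecting terms yields \eqref{lem_sav1_nonlinear1} with $C_g$ depending only on $G^*$, $F_0$, and $|\Omega|$.

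For \eqref{lem_sav1_nonlinear2} I would use the add-and-subtract splitting
\[
g(u^n,s^n)f(u^n)-g(u_e(t_n),s_e(t_n))f(u_e(t_n)) = g(u^n,s^n)\big(f(u^n)-f(u_e(t_n))\big)+\big(g(u^n,s^n)-g(u_e(t_n),s_e(t_n))\big)f(u_e(t_n)).
\]
The first term is bounded by $G^*\|f'\|_{C[-\beta,\beta]}\|e_u^n\|$, using the mean value theorem for $f$ (both arguments in $[-\beta,\beta]$) together with $g(u^n,s^n)\le G^*$; the second is bounded by $\|f(u_e(t_n))\|\le F_0|\Omega|^{\frac12}$ times the already-established estimate \eqref{lem_sav1_nonlinear1}. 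Summing the two contributions gives the claim.

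The main obstacle I anticipate is not any single inequality but securing the uniform upper bound for the exponential prefactor $\expp{\xi_n}$: this requires the bound on $g$ to hold simultaneously for the numerical solution (via discrete energy dissipation and the MBP) and for the projected exact solution $u_e(t_n)$ (via the continuous MBP), and, crucially, to be independent of $h$, which is exactly where \eqref{uinit_approx} and the hypothesis $h\le h_0$ enter. Everything else is a routine combination of the mean value theorem and Cauchy--Schwarz.
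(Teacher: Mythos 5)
Your proof is correct and follows essentially the same route as the paper: a mean-value-theorem bound on the exponential with the prefactor controlled by the uniform bounds on $g$ (from discrete energy dissipation, the MBP, and \eqref{uinit_approx}), the Lipschitz bound $|F(u^n)-F(u_e(t_n))|\le F_0|e_u^n|$ with Cauchy--Schwarz for the $E_{1h}$ difference, and the identical add-and-subtract splitting for \eqref{lem_sav1_nonlinear2}. The only cosmetic difference is that you apply the mean value theorem once to the combined exponent $s-E_{1h}(u)$, whereas the paper inserts the intermediate term $g(u^n,s_e(t_n))$ and treats the two arguments separately; both yield the same constant dependence.
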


\begin{proof}
For the exact solutions $u_e(t_n)$ and $s_e(t_n)$,
we have $\|u_e(t_n)\|_\infty\le\beta$ by the MBP and $s_e(t_n)\le E(\uinit)$ by the energy dissipation law.
Some careful calculations yield
\begin{align*}
| g(u^n,s^n) - g(u^n,s_e(t_n)) |
& = \frac{1}{\exp \{E_{1h}(u^n)\}} |\exp \{s^n\} - \exp \{s_e(t_n)\}| \\
& \le \frac{\exp \{\xi^n\}}{\exp \{E_{1h}(u^n)\}} |s^n - s_e(t_n)| \\
& \le G^* |s^n - s_e(t_n)|
\end{align*}
with $\xi^n$ being a number between $s^n$ and $s_e(t_n)$, and
\begin{align*}
& | g(u^n,s_e(t_n)) - g(u_e(t_n),s_e(t_n)) | \\
& \qquad\quad = \exp \{s_e(t_n)\} \Big|\frac{1}{\exp\{ E_{1h}(u^n)\}} - \frac{1}{\exp \{E_{1h}(u_e(t_n))\}} \Big| \nn \\
& \qquad\quad \le (\expp{E(\uinit)+C_*}) |E_{1h}(u^n) - E_{1h}(u_e(t_n))| \nn \\
& \qquad\quad \le |\Omega|^{\frac{1}{2}} (\expp{E(\uinit)+C_*}) \|F(u^n) - F(u_e(t_n))\| \nn \\
& \qquad\quad \le F_0 |\Omega|^{\frac{1}{2}} (\expp{E(\uinit)+C_*}) \|u^n-u_e(t_n)\|.
\end{align*}
By combining both of the above inqualities, we obtain  \eqref{lem_sav1_nonlinear1}.
In addition, we have
\begin{align*}
& \| g(u^n,s^n) f(u_e(t_n)) - g(u_e(t_n),s_e(t_n)) f(u_e(t_n))\| \\
& \qquad\quad \le \|f(u_e(t_n))\| |g(u^n,s^n) - g(u_e(t_n),s_e(t_n))| \nn \\
& \qquad\quad \le F_0 |\Omega|^{\frac{1}{2}} C (\|e_u^n\| + |e_s^n|).
\end{align*}
According to Corollary \ref{cor_g_upbound}, it holds
\begin{align*}
\| g(u^n,s^n) f(u^n) - g(u^n,s^n) f(u_e(t_n)) \|
& \le G^* \|f(u^n) - f(u_e(t_n))\| \\
& \le G^* \|f'\|_{C[-\beta,\beta]} \|u^n - u_e(t_n)\|.
\end{align*}
Then, we obtain \eqref{lem_sav1_nonlinear2} with the help of the triangular inequality to the above two inequalities.
\end{proof}

\begin{theorem}[Error estimate of sESAV1]
\label{thm_sav1_error}
Given a fixed time $T>0$ and suppose the exact solution $u_e$ is smooth enough on $[0,T]\times\overline{\Omega}$.
Assume that $\kp\ge \|f'\|_{C[-\beta,\beta]}$ and $\|\uinit\|_\infty\le\beta$.
If $\dt$ and $h$ are  small sufficiently, then we have the error estimate
for the sESAV1 scheme \eqref{eq_sav1stab} as follows:
\begin{equation*}
\|e_u^n\| + \|\nabla_h e_u^n\| + |e_s^n| \le C(\dt+h^2),\qquad 0 \le n \le \lfloor T/\dt\rfloor,
\end{equation*}
where the constant $C>0$ depends on $C_*$, $|\Omega|$, $T$, $u_e$, $\kp$, $\eps$, and $\|f\|_{C^1[-\beta,\beta]}$
but is independent of $\dt$ and $h$.
\end{theorem}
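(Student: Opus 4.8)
The plan is to run the standard discrete energy method on the error functions $e_u^n$ and $e_s^n$ from \eqref{errorfuns} and close with a discrete Gronwall inequality. Throughout, the hypotheses $\kp\ge\|f'\|_{C[-\beta,\beta]}$ and $\|\uinit\|_\infty\le\beta$ let me invoke the MBP (Theorem \ref{thm_sav1_mbp}), so that $\|u^n\|_\infty\le\beta$ and the Lipschitz estimates of Lemma \ref{lem_sav1_nonlinear} apply, while Corollaries \ref{cor_g_upbound} and \ref{cor_sav1_g_bound} confine the variable coefficient to the fixed range $0<G_*\le g(u^n,s^n)\le G^*$. Writing $g^n=g(u^n,s^n)$ and $g_e^n=g(u_e(t_n),s_e(t_n))$, the first step is to subtract the consistency relations \eqref{sav1trun} from the scheme \eqref{eq_sav1stab} to get the error equations; the stabilization difference should be split as
\[
g^n(u^{n+1}-u^n)-g_e^n(u_e(t_{n+1})-u_e(t_n)) = g^n(e_u^{n+1}-e_u^n) + (g^n-g_e^n)(u_e(t_{n+1})-u_e(t_n)),
\]
where the last factor is $O(\dt)$ and $|g^n-g_e^n|$ is controlled by \eqref{lem_sav1_nonlinear1}.

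For the $u$-error I would test the first error equation with $2\dt e_u^{n+1}$. The Laplacian term produces $-2\dt\eps^2\|\nabla_h e_u^{n+1}\|^2\le0$; the leading stabilization piece $-\kp g^n(e_u^{n+1}-e_u^n)$ pairs with the discrete time derivative to give the factor $(1+\dt\kp g^n)\ge1$, so by positivity of $g^n$ this term only reinforces the coercive left-hand side (its effect on $\|e_u^{n+1}\|^2$ is an absorbable $O(\dt)$ contribution since $g^n\le G^*$). The nonlinear difference is handled by \eqref{lem_sav1_nonlinear2}, the remaining stabilization piece by $|g^n-g_e^n|\,\|u_e(t_{n+1})-u_e(t_n)\|=O(\dt(\|e_u^n\|+|e_s^n|))$, and the truncation term by \eqref{sav1trunerr}; Young's inequality then yields, after summation from $0$ to $m-1$ and using $e_u^0=0$,
\[
\|e_u^m\|^2 + \sum_{n=0}^{m-1}\|e_u^{n+1}-e_u^n\|^2 + \dt\eps^2\sum_{n=0}^{m-1}\|\nabla_h e_u^{n+1}\|^2 \le C\dt\sum_{n=0}^{m-1}(\|e_u^n\|^2+|e_s^n|^2) + CT(\dt+h^2)^2.
\]
A second test with $2\dt\,\delta_t e_u^{n+1}$ (an $H^1$-type estimate) upgrades this to a pointwise bound on $\|\nabla_h e_u^m\|^2$ and a summed bound on $\dt\sum\|\delta_t e_u^{n+1}\|^2$, with the same right-hand side.

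The delicate part is the $s$-error. Subtracting the exact analogue of \eqref{lem_sav1_pf2} expresses $e_s^{n+1}-e_s^n$ as $-[g^n\<f(u^n),u^{n+1}-u^n\> - g_e^n\<f(u_e(t_n)),u_e(t_{n+1})-u_e(t_n)\>]-\dt R_{1s}^n$, and splitting the bracket isolates the troublesome coupling $g^n\<f(u^n),e_u^{n+1}-e_u^n\>$ (the remainder is $O(\dt(\|e_u^n\|+|e_s^n|))$ by \eqref{lem_sav1_nonlinear2}). Bounding this coupling crudely by $\|e_u^{n+1}-e_u^n\|$ loses a factor of $\dt$ upon summation, so instead I would substitute the $u$-error equation for $e_u^{n+1}-e_u^n$, turning it into $\dt$ times manageable quantities: after summation by parts $\<f(u^n),\Delta_h e_u^{n+1}\>=-\<\nabla_h f(u^n),\nabla_h e_u^{n+1}\>$ with $\|\nabla_h f(u^n)\|\le C$ guaranteed by the discrete $H^2$-bound of Lemma \ref{lem_sav1_h2bound}, plus the nonlinear, stabilization, and truncation contributions. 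Summing in $n$ and applying Cauchy--Schwarz in time, the resulting $\dt\sum\|\nabla_h e_u^{n+1}\|$ and $\dt\sum\|e_u^{n+1}-e_u^n\|$ are absorbed by the gradient dissipation and increment control from the $u$-estimate, giving
\[
|e_s^m|^2 \le C\dt\sum_{n=0}^{m-1}(\|e_u^n\|^2+|e_s^n|^2) + CT(\dt+h^2)^2.
\]

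Finally, I would set $\mathcal E^n=\|e_u^n\|^2+\|\nabla_h e_u^n\|^2+|e_s^n|^2$ and add the three estimates to obtain $\mathcal E^m\le C\dt\sum_{n=0}^{m-1}\mathcal E^n + CT(\dt+h^2)^2$; the discrete Gronwall lemma then gives $\mathcal E^m\le Ce^{CT}(\dt+h^2)^2$ for all $m\le\lfloor T/\dt\rfloor$, which is the claimed estimate after taking square roots. I expect the main obstacle to be exactly the coupling term $g^n\<f(u^n),\delta_t u^{n+1}\>$ in the $s$-equation: it ties $e_s$ to the \emph{time increments} of $e_u$, and controlling it to the right order hinges on trading those increments (via the $u$-error equation) for the discrete parabolic gradient dissipation, quantified through the $H^2$-bound of Lemma \ref{lem_sav1_h2bound} and the two-sided bounds on $g(u^n,s^n)$. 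Keeping every constant uniform in $\dt$ and $h$ under the MBP is what makes the Gronwall argument close.
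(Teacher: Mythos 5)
Your proposal is correct and reaches the stated estimate, but it diverges from the paper's proof at the two places where the argument is actually delicate, so a comparison is worthwhile. First, for the $L^2$ bound on $e_u^n$ you perform a separate test of the $u$-error equation with $2\dt e_u^{n+1}$ and then a second test with $2\dt\,\delta_t e_u^{n+1}$ for the $H^1$ and increment bounds; the paper does only the single test with $2\dt\,\delta_t e_u^{n+1}$ and recovers the $L^2$ control from the stabilization term, using $g(u^n,s^n)\ge G_*$ and the identity \eqref{sav1err_pf0} to produce the telescoping quantity $2G_*\kp(\|e_u^{n+1}\|^2-\|e_u^n\|^2)$. Your version costs one extra estimate but has the minor advantage of not routing the $L^2$ control through the factor $\kp G_*$. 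Second, and more substantively, for the coupling term $g(u^n,s^n)\<f(u^n),\delta_t e_u^{n+1}\>$ in the $s$-equation you substitute the $u$-error equation, integrate by parts to get $-\<\nabla_h f(u^n),\nabla_h e_u^{n+1}\>$, invoke the $H^2$ bound of Lemma \ref{lem_sav1_h2bound} to control $\|\nabla_h f(u^n)\|$, and then absorb $\dt\sum\|\nabla_h e_u^{n+1}\|$ via Cauchy--Schwarz in time into the gradient dissipation from the $u$-estimate. The paper instead multiplies \eqref{sav1errb} by $2\dt e_s^{n+1}$ and bounds the coupling by $C_3\dt|e_s^{n+1}|^2+\frac{\dt}{4}\|\delta_t e_u^{n+1}\|^2$, where the second piece is exactly the portion of the dissipation $2\dt\|\delta_t e_u^{n+1}\|^2$ deliberately left unspent in \eqref{sav1err_pf5}; this is shorter and uses Lemma \ref{lem_sav1_h2bound} only indirectly (through Corollary \ref{cor_sav1_g_bound}), whereas your route uses it directly. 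Both treatments correctly avoid the half-order loss that a naive telescoping bound on $\|e_u^{n+1}-e_u^n\|$ would incur, which you rightly identify as the crux; your Gronwall closure is then the same as the paper's.
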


\begin{proof}
The difference between \eqref{eq_sav1stab} and \eqref{sav1trun} leads to
\begin{subequations}
\label{sav1err}
\begin{align}
\delta_te_u^{n+1}
& = \eps^2 \Delta_h e_u^{n+1} + g(u^n,s^n) f(u^n) 
- g(u_e(t_n),s_e(t_n)) f(u_e(t_n)) - \kp g(u^n,s^n) (e_u^{n+1}-e_u^n) \nn \\
& \quad + \kp (g(u_e(t_n),s_e(t_n)) - g(u^n,s^n)) (u_e(t_{n+1})-u_e(t_n)) - R_{1u}^n, \label{sav1erra} \\
\delta_te_s^{n+1}
& = \Big\< g(u_e(t_n),s_e(t_n)) f(u_e(t_n)) - g(u^n,s^n) f(u^n), \frac{u_e(t_{n+1})-u_e(t_n)}{\dt} \Big\> \nn \\
& \quad - g(u^n,s^n) \<f(u^n),\delta_te_u^{n+1}\> - R_{1s}^n. \label{sav1errb}
\end{align}
\end{subequations}

Taking the discrete inner product of \eqref{sav1erra} with $2\dt\delta_te_u^{n+1}$
and rearranging the terms give us
\begin{align*}
&
2\dt\|\delta_te_u^{n+1}\|^2 + 2\kp g(u^n,s^n) \|e_u^{n+1}-e_u^n\|^2 - 2\eps^2 \<\Delta_h e_u^{n+1},e_u^{n+1}-e_u^n\> \\
& \qquad = 2\dt \<g(u^n,s^n) f(u^n) - g(u_e(t_n),s_e(t_n)) f(u_e(t_n)), \delta_te_u^{n+1}\> \nn \\
& \qquad\quad + 2 \kp\dt (g(u_e(t_n),s_e(t_n)) - g(u^n,s^n)) \<u_e(t_{n+1})-u_e(t_n), \delta_te_u^{n+1}\>
- 2\dt\<R_{1u}^n,\delta_te_u^{n+1}\>.
\end{align*}
Since $g(u^n,s^n)\ge G_*>0$ by Corollary \ref{cor_sav1_g_bound},
using the identities
\begin{align}
\<\Delta_h e_u^{n+1},e_u^{n+1}-e_u^n\>
& = -\frac{1}{2}\|\nabla_h e_u^{n+1}\|^2 + \frac{1}{2}\|\nabla_h e_u^n\|^2
- \frac{1}{2}\dt^2\|\nabla_h \delta_te_u^{n+1}\|^2, \nn \\ 
\|e_u^{n+1}-e_u^n\|^2 & = \|e_u^{n+1}\|^2 - \|e_u^n\|^2 - 2\dt \<e_u^n, \delta_te_u^{n+1}\>, \label{sav1err_pf0}
\end{align}
we obtain
\begin{align}
&  2G_*\kp\|e_u^{n+1}\|^2 - 2G_*\kp\|e_u^n\|^2
+ \eps^2 \|\nabla_h e_u^{n+1}\|^2 - \eps^2 \|\nabla_h e_u^n\|^2
+ 2\dt \|\delta_te_u^{n+1}\|^2 \nn \\
& \qquad\quad \le 2\dt \<g(u^n,s^n) f(u^n) - g(u_e(t_n),s_e(t_n)) f(u_e(t_n)), \delta_te_u^{n+1}\> \nn \\
& \qquad\qquad + 2\kp\dt (g(u_e(t_n),s_e(t_n)) - g(u^n,s^n)) \<u_e(t_{n+1})-u_e(t_n), \delta_te_u^{n+1}\> \nn \\
& \qquad\qquad + 4G_*\kp\dt\<e_u^n, \delta_te_u^{n+1}\> - 2\dt \<R_{1u}^n, \delta_te_u^{n+1}\>. \label{sav1err_pf3}
\end{align}
For the first term in the right-hand side of \eqref{sav1err_pf3}, by Lemma \ref{lem_sav1_nonlinear} we have
\begin{align}
& 2\dt \<g(u^n,s^n) f(u^n) - g(u_e(t_n),s_e(t_n)) f(u_e(t_n)), \delta_te_u^{n+1}\> \nn \\
& \qquad\quad \le 2 C_{g} \dt (\|e_u^n\| + |e_s^n|) \|\delta_te_u^{n+1}\| 
\le 4 C_{g} ^2 \dt (\|e_u^n\|^2 + |e_s^n|^2)  + \frac{\dt}{2} \|\delta_te_u^{n+1}\|^2, \label{sav1err_pf4a}
\end{align}
where $ C_{g} >0$ is the constant in Lemma \ref{lem_sav1_nonlinear}.
For the second term in the right-hand side of \eqref{sav1err_pf3}, we have
\begin{align}
& 2\kp\dt (g(u_e(t_n),s_e(t_n)) - g(u^n,s^n)) \<u_e(t_{n+1})-u_e(t_n), \delta_te_u^{n+1}\> \nn \\
& \qquad\quad \le 2\kp\dt |g(u_e(t_n),s_e(t_n)) - g(u^n,s^n)| \|u_e(t_{n+1})-u_e(t_n)\| \|\delta_te_u^{n+1}\| \nn \\
& \qquad\quad \le 2 C_{g} \kp\dt (\|u_e(t_{n+1})\|+\|u_e(t_n)\|) (\|e_u^n\| + |e_s^n|) \|\delta_te_u^{n+1}\|\nn \\
& \qquad\quad \le C_1\kp^2\dt (\|e_u^n\|^2 + |e_s^n|^2) + \frac{\dt}{2} \|\delta_te_u^{n+1}\|^2, \label{sav1err_pf4b0}
\end{align}
where $C_1>0$ depends on $C_*$, $|\Omega|$, $u_e$, and $\|f\|_{C^1[-\beta,\beta]}$.
Using the Young's inequality,
the third and fourth terms in the right-hand side of \eqref{sav1err_pf3} can be bounded respectively as
\begin{eqnarray}
& 4G_*\kp\dt\<e_u^n, \delta_te_u^{n+1}\>
\le 4G_*\kp\dt \|e_u^n\| \|\delta_te_u^{n+1}\|
\le 8G_*^2\kp^2\dt \|e_u^n\|^2 + \dfrac{\dt}{2}\|\delta_te_u^{n+1}\|^2, \qquad \label{sav1err_pf4b} \\
& -2\dt \<R_{1u}^n, \delta_te_u^{n+1}\> \le 2\dt \|R_{1u}^n\| \|\delta_te_u^{n+1}\|
\le 4\dt \|R_{1u}^n\|^2 + \dfrac{\dt}{4}\|\delta_te_u^{n+1}\|^2. \quad \label{sav1err_pf4c}
\end{eqnarray}
Then, substituting \eqref{sav1err_pf4a}--\eqref{sav1err_pf4c} into \eqref{sav1err_pf3} leads to
\begin{align}
& 2G_*\kp\|e_u^{n+1}\|^2 - 2G_*\kp\|e_u^n\|^2
+ \eps^2 \|\nabla_h e_u^{n+1}\|^2 - \eps^2 \|\nabla_h e_u^n\|^2
+ \frac{\dt}{4} \|\delta_te_u^{n+1}\|^2 \nn \\
& \qquad \le (4 C_{g} ^2+C_1\kp^2+8G_*^2\kp^2) \dt \|e_u^n\|^2
+ (4 C_{g} ^2+C_1\kp^2) \dt|e_s^n|^2 + 4\dt \|R_{1u}^n\|^2. \label{sav1err_pf5}
\end{align}

Multiplying \eqref{sav1errb} by $2\dt e_s^{n+1}$ yields
\begin{align}
& |e_s^{n+1}|^2 - |e_s^n|^2 + |e_s^{n+1}-e_s^n|^2 \nn \\
& \qquad\quad = 2e_s^{n+1} \< g(u_e(t_n),s_e(t_n)) f(u_e(t_n)) - g(u^n,s^n) f(u^n), u_e(t_{n+1})-u_e(t_n) \> \nn \\
& \qquad\qquad - 2\dt e_s^{n+1} g(u^n,s^n) \<f(u^n),\delta_te_u^{n+1}\> - 2\dt R_{1s}^n e_s^{n+1}. \label{sav1err_pf6}
\end{align}
For the first term in the right-hand side of \eqref{sav1err_pf6}, by Lemma \ref{lem_sav1_nonlinear} we have
\begin{align}
& 2e_s^{n+1} \< g(u_e(t_n),s_e(t_n)) f(u_e(t_n)) - g(u^n,s^n) f(u^n), u_e(t_{n+1})-u_e(t_n) \> \nn \\
& \qquad\quad \le 2|e_s^{n+1}| \|g(u_e(t_n),s_e(t_n)) f(u_e(t_n)) - g(u^n,s^n) f(u^n)\|  \|u_e(t_{n+1})-u_e(t_n)\| \nn \\
& \qquad\quad \le 2 C_{g} \dt |e_s^{n+1}|  (\|e_u^n\| + |e_s^n|) \|(u_e)_t(\theta_n)\| \qquad (\mbox{for some } t_n<\theta_n<t_{n+1}) \nn \\
& \qquad\quad \le C_2 \dt (\|e_u^n\|^2 + |e_s^n|^2 + |e_s^{n+1}|^2), \label{sav1err_pf7a}
\end{align}
where $C_2>0$ depends on $C_*$, $|\Omega|$, $u_e$, and $\|f\|_{C^1[-\beta,\beta]}$.
For the second term in the right-hand side of \eqref{sav1err_pf6},
using Corollary \ref{cor_g_upbound}, we obtain
\begin{align}
\label{sav1err_pf7b}
- 2\dt e_s^{n+1} g(u^n,s^n) \<f(u^n),\delta_te_u^{n+1}\>
& \le 2G^*\dt \|f(u^n)\| |e_s^{n+1}| \|\delta_te_u^{n+1}\| \nn \\
& \le C_3\dt |e_s^{n+1}|^2 + \frac{\dt}{4}\|\delta_te_u^{n+1}\|^2,
\end{align}
where $C_3>0$ depends on $C_*$, $|\Omega|$, $\uinit$, and $\|f\|_{C[-\beta,\beta]}$.
For the third term in the right-hand side of \eqref{sav1err_pf6}, we have
\begin{equation}
\label{sav1err_pf7c}
- 2\dt R_{1s}^n e_s^{n+1} \le \dt |R_{1s}^n|^2 + \dt |e_s^{n+1}|^2.
\end{equation}
Then, substituting \eqref{sav1err_pf7a}--\eqref{sav1err_pf7c} into \eqref{sav1err_pf6} leads to
\begin{equation}
\label{sav1err_pf8}
|e_s^{n+1}|^2 - |e_s^n|^2
\le C_2 \dt \|e_u^n\|^2 + C_2 \dt |e_s^n|^2
+ (1 + C_2 + C_3) \dt |e_s^{n+1}|^2 + \frac{\dt}{4}\|\delta_te_u^{n+1}\|^2 + \dt |R_{1s}^n|^2.
\end{equation}

Adding \eqref{sav1err_pf5} and \eqref{sav1err_pf8}, we obtain
\begin{align*}
& 2G_*\kp (\|e_u^{n+1}\|^2 - \|e_u^n\|^2)
+ \eps^2 (\|\nabla_h e_u^{n+1}\|^2 - \|\nabla_h e_u^n\|^2) + (|e_s^{n+1}|^2 - |e_s^n|^2) \\
& \qquad\quad \le (4 C_{g} ^2+C_1\kp^2+8G_*^2\kp^2+C_2) \dt \|e_u^n\|^2 + (4 C_{g} ^2+C_1\kp^2+C_2) \dt|e_s^n|^2 \\
& \qquad\qquad + (1 + C_2 + C_3) \dt |e_s^{n+1}|^2 + 4\dt \|R_{1u}^n\|^2 + \dt |R_{1s}^n|^2.
\end{align*}
Then, using \eqref{sav1trunerr},
we reach
\begin{align*}
& \quad~ 2G_*\kp (\|e_u^{n+1}\|^2 - \|e_u^n\|^2)
+ \eps^2 (\|\nabla_h e_u^{n+1}\|^2 - \|\nabla_h e_u^n\|^2)
+ (|e_s^{n+1}|^2 - |e_s^n|^2) \\
&\qquad\qquad \le C_4 \dt (\|e_u^n\|^2 + |e_s^n|^2 + |e_s^{n+1}|^2) + 5C_e^2\dt(\dt+h^2)^2,
\end{align*}
where the constant $C_4$ depends on $C_*$, $|\Omega|$, $T$, $u_e$, $\kp$, $\eps$, and $\|f\|_{C^1[-\beta,\beta]}$.

Letting $W^n := 2G_*\kp \|e_u^n\|^2 + \eps^2 \|\nabla_h e_u^n\|^2 + |e_s^n|^2$,
we have
\[
W^{n+1} - W^n \le  \widetilde{C}_4\dt (W^n + W^{n+1}) + 5C_e^2\dt(\dt+h^2)^2,
\]
where $ \widetilde{C}_4$ depends on $C_4$ and $\kp$.
When $\dt\le\frac{1}{2 \widetilde{C}_4}$,
noting that $\frac{1+ \widetilde{C}_4\dt}{1- \widetilde{C}_4\dt}\le 1 + 4 \widetilde{C}_4\dt$, we obtain
\[
W^{n+1} \le (1+4 \widetilde{C}_4\dt) W^n + 10C_e^2\dt (\dt+h^2)^2.
\]
Using the discrete Gronwall's inequality, we obtain
\[
2G_*\kp\|e_u^n\|^2 + \eps^2 \|\nabla_h e_u^n\|^2 + |e_s^n|^2
= W^n \le 10C_e^2 \e^{4 \widetilde{C}_4T} (\dt+h^2)^2,
\]
which completes the proof.
\end{proof}

\begin{remark}
For any fixed $h>0$, let us recall the space-discrete problem \eqref{eq_semidis}
and denote by $u_{h,e}(t)$ the exact solution.
By similar analysis as Theorem \ref{thm_sav1_error},
one can obtain the error estimates for sufficiently small $\dt$ as follows:
\begin{equation*}
\|u^n-u_{h,e}(t_n)\| + \|\nabla_h u^n - \nabla_h u_{h,e}(t_n)\| + |s^n-E_{1h}(u_{h,e}(t))| \le C_h\dt,
\end{equation*}
where the constant  $C_h>0$ depends on $C_*$, $|\Omega|$, $T$, $u_{h,e}$, $\kp$, $\eps$, and $\|f\|_{C^1[-\beta,\beta]}$
but is independent of $\dt$.
\end{remark}

\subsection{Second-order sESAV scheme}
\label{sect_sav2}

For the space-discrete system \eqref{eq_semidis},
the second-order stabilized ESAV scheme (sESAV2) is given by
\begin{subequations}
\label{eq_sav2stab}
\begin{align}
\delta_t u^{n+1}
& = \eps^2 \Delta_h u^{n+\frac{1}{2}} +  g(\widehat{u}^{n+\frac{1}{2}},\widehat{s}^{n+\frac{1}{2}}) f(\widehat{u}^{n+\frac{1}{2}})
- \kp  g(\widehat{u}^{n+\frac{1}{2}},\widehat{s}^{n+\frac{1}{2}}) (u^{n+\frac{1}{2}}-\widehat{u}^{n+\frac{1}{2}}), \label{eq_sav2staba} \\
\delta_t s^{n+1}
& = -  g(\widehat{u}^{n+\frac{1}{2}},\widehat{s}^{n+\frac{1}{2}}) \<f(\widehat{u}^{n+\frac{1}{2}}),\delta_t u^{n+1}\>
+ \kp  g(\widehat{u}^{n+\frac{1}{2}},\widehat{s}^{n+\frac{1}{2}}) \<u^{n+\frac{1}{2}}-\widehat{u}^{n+\frac{1}{2}}, \delta_t u^{n+1}\>, \label{eq_sav2stabb}
\end{align}
\end{subequations}
where $  g(\widehat{u}^{n+\frac{1}{2}},\widehat{s}^{n+\frac{1}{2}}) > 0$
with $(\widehat{u}^{n+\frac{1}{2}},\widehat{s}^{n+\frac{1}{2}})$ being generated by
the first-order scheme \eqref{eq_sav1stab} with the time step size $\dt/2$, i.e.,
\begin{subequations}
\label{eq_sav2stab0}
\begin{align}
\frac{\widehat{u}^{n+\frac{1}{2}}-u^n}{\dt/2}
& = \eps^2 \Delta_h \widehat{u}^{n+\frac{1}{2}} +  g(u^n,s^n)f(u^n) - \kp  g(u^n,s^n)(\widehat{u}^{n+\frac{1}{2}}-u^n),
\label{eq_sav2stab0a}\\
\widehat{s}^{n+\frac{1}{2}}-s^n & = -  g(u^n,s^n)\<f(u^n),\widehat{u}^{n+\frac{1}{2}}-u^n\>. \label{eq_sav2stab0b}
\end{align}
\end{subequations}
The scheme \eqref{eq_sav2stab} is started by $u^0=\uinit$ and $s^0=E_{1h}(u^0)$.
By the definition of $u^{n+\frac{1}{2}}$,
the last term in \eqref{eq_sav2staba} is actually
$- \frac{1}{2} \kp  g(\widehat{u}^{n+\frac{1}{2}},\widehat{s}^{n+\frac{1}{2}}) (u^{n+1}-2\widehat{u}^{n+\frac{1}{2}}+u^n)$,
which provides a second-order truncation error in time.
We can rewrite \eqref{eq_sav2stab} in the following form:
\begin{subequations}
\label{eq_sav2var}
\begin{align}
& \Big[\Big(\frac{2}{\dt}+\kp  g(\widehat{u}^{n+\frac{1}{2}},\widehat{s}^{n+\frac{1}{2}})\Big)I-\eps^2\Delta_h\Big] u^{n+1}
 \nn\\
&\qquad = \Big[\Big(\frac{2}{\dt}-\kp  g(\widehat{u}^{n+\frac{1}{2}},\widehat{s}^{n+\frac{1}{2}})\Big)I+\eps^2\Delta_h\Big] u^n + 2g(\widehat{u}^{n+\frac{1}{2}},\widehat{s}^{n+\frac{1}{2}})[f(\widehat{u}^{n+\frac{1}{2}}) + \kp\widehat{u}^{n+\frac{1}{2}}], \label{lem_sav2_pf} \\
&s^{n+1}  = s^n -  g(\widehat{u}^{n+\frac{1}{2}},\widehat{s}^{n+\frac{1}{2}})
\<f(\widehat{u}^{n+\frac{1}{2}}) - \kp (u^{n+\frac{1}{2}}-\widehat{u}^{n+\frac{1}{2}}),u^{n+1}-u^n\>.
\end{align}
\end{subequations}
It is then easy to see that the system \eqref{eq_sav2var} is linear and uniquely solvable for any $\dt>0$
since $(\frac{2}{\dt}+\kp  g(\widehat{u}^{n+\frac{1}{2}},\widehat{s}^{n+\frac{1}{2}}))I-\eps^2\Delta_h$ is self-adjoint and positive definite.

\subsubsection{Energy dissipation and MBP}

The energy dissipation law and the MBP preservation of the  sESAV2  scheme \eqref{eq_sav2stab}  are stated below.

\begin{theorem}[Energy dissipation of sESAV2]
\label{thm_sav2_es}
For any $\kp\ge0$ and $\dt>0$, the sESAV2 scheme \eqref{eq_sav2stab} is energy dissipative
in the sense that $\hE_h(u^{n+1},s^{n+1})\le \hE_h(u^n,s^n)$,
where $\hE_h(u^n,s^n)$ is given by \eqref{modified_energy}.
Moreover, it holds that  $s^n\le E_h(\uinit)$ and $\widehat{s}^{n+\frac{1}{2}}\le E_h(\uinit)$ for all $n$.
\end{theorem}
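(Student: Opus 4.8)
The plan is to mirror the structure of the proof of Theorem~\ref{thm_sav1_es}, exploiting the fact that the $s$-update \eqref{eq_sav2stabb} is deliberately engineered to absorb exactly those terms produced when testing the $u$-update against the increment $u^{n+1}-u^n$. Writing $G:= g(\widehat{u}^{n+\frac{1}{2}},\widehat{s}^{n+\frac{1}{2}})$ for brevity, first I would take the discrete inner product of \eqref{eq_sav2staba} with $u^{n+1}-u^n=\dt\,\delta_t u^{n+1}$, obtaining
\[
\frac{1}{\dt}\|u^{n+1}-u^n\|^2
= \eps^2\<\Delta_h u^{n+\frac{1}{2}},u^{n+1}-u^n\>
+ G\<f(\widehat{u}^{n+\frac{1}{2}}),u^{n+1}-u^n\>
- \kp G\<u^{n+\frac{1}{2}}-\widehat{u}^{n+\frac{1}{2}},u^{n+1}-u^n\>.
\]
The crucial observation is that multiplying \eqref{eq_sav2stabb} by $\dt$ gives precisely
\[
s^{n+1}-s^n
= -G\<f(\widehat{u}^{n+\frac{1}{2}}),u^{n+1}-u^n\>
+ \kp G\<u^{n+\frac{1}{2}}-\widehat{u}^{n+\frac{1}{2}},u^{n+1}-u^n\>,
\]
so the entire nonlinear-plus-stabilization contribution on the right of the first identity equals $-(s^{n+1}-s^n)$.

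Next I would handle the diffusion term using the Crank--Nicolson structure. By summation-by-parts together with the midpoint average $u^{n+\frac{1}{2}}=\frac{1}{2}(u^{n+1}+u^n)$, one has the clean telescoping identity
\[
\eps^2\<\Delta_h u^{n+\frac{1}{2}},u^{n+1}-u^n\>
= -\frac{\eps^2}{2}\big(\|\nabla_h u^{n+1}\|^2-\|\nabla_h u^n\|^2\big),
\]
with no leftover squared-increment term (unlike the backward-Euler case in Theorem~\ref{thm_sav1_es}). Substituting the two identities into the tested $u$-equation and rearranging yields
\[
\hE_h(u^{n+1},s^{n+1})-\hE_h(u^n,s^n)
= -\frac{1}{\dt}\|u^{n+1}-u^n\|^2 \le 0,
\]
which is exactly the claimed energy dissipation.

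For the two uniform bounds I would argue as in Corollary~\ref{cor_sav1_es}. Iterating the dissipation inequality down to $n=0$ and recalling $s^0=E_{1h}(\uinit)$ gives $\hE_h(u^n,s^n)\le\hE_h(u^0,s^0)=\frac{\eps^2}{2}\|\nabla_h\uinit\|^2+E_{1h}(\uinit)=E_h(\uinit)$; discarding the nonnegative gradient term then produces $s^n\le E_h(\uinit)$. For the predicted value, I would observe that $(\widehat{u}^{n+\frac{1}{2}},\widehat{s}^{n+\frac{1}{2}})$ is generated from $(u^n,s^n)$ by exactly one step of the sESAV1 scheme \eqref{eq_sav1stab} with step size $\dt/2$; hence Theorem~\ref{thm_sav1_es} applies verbatim to that single step, giving $\hE_h(\widehat{u}^{n+\frac{1}{2}},\widehat{s}^{n+\frac{1}{2}})\le\hE_h(u^n,s^n)\le E_h(\uinit)$, and dropping the gradient term again yields $\widehat{s}^{n+\frac{1}{2}}\le E_h(\uinit)$.

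I expect the only genuinely subtle point to be recognizing the exact sign-matching between the tested $u$-equation and the $s$-update: unlike the first-order scheme, where the stabilization contribution $-\kp G\|u^{n+1}-u^n\|^2$ is itself dissipative, here the stabilization inner product $-\kp G\<u^{n+\frac{1}{2}}-\widehat{u}^{n+\frac{1}{2}},u^{n+1}-u^n\>$ is not sign-definite and is instead cancelled identically by the matching term built into \eqref{eq_sav2stabb}. Once that cancellation is spotted, the remainder is routine; the Crank--Nicolson telescoping and the reduction of the predictor bound to Theorem~\ref{thm_sav1_es} require no new estimates.
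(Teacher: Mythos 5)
Your proposal is correct and follows essentially the same route as the paper: test \eqref{eq_sav2staba} with $u^{n+1}-u^n$, use the Crank--Nicolson telescoping of the diffusion term, cancel the nonlinear and stabilization contributions exactly against the $s$-update \eqref{eq_sav2stabb} to get $\hE_h(u^{n+1},s^{n+1})-\hE_h(u^n,s^n)=-\frac{1}{\dt}\|u^{n+1}-u^n\|^2$, and then bound $\widehat{s}^{n+\frac{1}{2}}$ by applying Theorem~\ref{thm_sav1_es} to the sESAV1 predictor step. All steps, including the observation that the stabilization inner product is not sign-definite but is cancelled identically, are consistent with the paper's argument.
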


\begin{proof}
Taking the inner product of \eqref{eq_sav2stab} with $u^{n+1}-u^n$ yields
\begin{align}
\label{sav2_es_pf1}
\frac{1}{\dt}\|u^{n+1}-u^n\|^2
& = -\frac{\eps^2}{2}\|\nabla_hu^{n+1}\|^2 + \frac{\eps^2}{2}\|\nabla_hu^n\|^2 
+  g(\widehat{u}^{n+\frac{1}{2}},\widehat{s}^{n+\frac{1}{2}}) \<f(\widehat{u}^{n+\frac{1}{2}}),u^{n+1}-u^n\>\nn\\
&\quad - \kp  g(\widehat{u}^{n+\frac{1}{2}},\widehat{s}^{n+\frac{1}{2}}) \<u^{n+\frac{1}{2}}-\widehat{u}^{n+\frac{1}{2}},u^{n+1}-u^n\>.
\end{align}
Combining \eqref{sav2_es_pf1} and \eqref{eq_sav2stabb},
we obtain
\[
\hE_h(u^{n+1},s^{n+1}) - \hE_h(u^n,s^n) = -\frac{1}{\dt} \|u^{n+1}-u^n\|^2 \le 0.
\]

Similar to the proof of Corollary \ref{cor_sav1_es},
the uniform boundedness of $\{s^n\}$ is a direct result of the energy stability.
Since $\widehat{s}^{n+\frac{1}{2}}$ is generated by the sESAV1 scheme \eqref{eq_sav2stab0},
according to Theorem \ref{thm_sav1_es}, we have
$\widehat{s}^{n+\frac{1}{2}}
\le \hE_h(\widehat{u}^{n+\frac{1}{2}},\widehat{s}^{n+\frac{1}{2}})
\le \hE_h(u^n,s^n)$,
and thus, we have $\widehat{s}^{n+\frac{1}{2}}\le E_h(\uinit)$.
\end{proof}


\begin{theorem}[MBP of sESAV2]
\label{thm_sav2_mbp}
If $h\le h_0$, $\kp\ge \|f'\|_{C[-\beta,\beta]}$, and
\begin{equation}
\label{sav2_mbp_dt}
\dt \le \Big( \frac{\kp G^*}{2} + \frac{\eps^2}{h^2} \Big)^{-1},
\end{equation}
where $G^*$ is the positive constant defined in Corollary \ref{cor_g_upbound}, then the sESAV2 scheme \eqref{eq_sav2stab} preserves the MBP for $\{u^n\}$,
i.e., \eqref{dismbp} is valid.
\end{theorem}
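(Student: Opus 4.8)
The plan is to argue by induction on $n$, establishing the implication $\|u^n\|_\infty\le\beta\Rightarrow\|u^{n+1}\|_\infty\le\beta$, the base case being $u^0=\uinit$ with $\|\uinit\|_\infty\le\beta$. Write $g_n:=g(\widehat{u}^{n+\frac12},\widehat{s}^{n+\frac12})$ for brevity. The first step is to dispose of the predictor: since $(\widehat{u}^{n+\frac12},\widehat{s}^{n+\frac12})$ is produced by the first-order scheme \eqref{eq_sav2stab0} run with step size $\dt/2$, Theorem \ref{thm_sav1_mbp} applies verbatim and yields $\|\widehat{u}^{n+\frac12}\|_\infty\le\beta$ \emph{with no restriction on} $\dt$. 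Moreover Theorem \ref{thm_sav2_es} gives $\widehat{s}^{n+\frac12}\le E_h(\uinit)$, so the estimate behind Corollary \ref{cor_g_upbound} applies to the predictor pair and furnishes $0<g_n\le G^*$. Thus the coefficient $g_n$ multiplying the stabilization and nonlinear terms is positive and uniformly bounded, which is exactly what will let a single step-size condition work for every $n$.

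Next I would work from the linearly-implicit form \eqref{lem_sav2_pf}, abbreviated as $L\,u^{n+1}=R\,u^n+2g_n\bigl[f(\widehat{u}^{n+\frac12})+\kp\widehat{u}^{n+\frac12}\bigr]$ with $L:=(\tfrac{2}{\dt}+\kp g_n)I-\eps^2\Delta_h$ and $R:=(\tfrac{2}{\dt}-\kp g_n)I+\eps^2\Delta_h$. For the implicit side, the rescaled version of Lemma \ref{lem_lapdiff} (pulling $\eps^2$ out of the Laplacian) gives $\|L^{-1}\|_\infty\le(\tfrac{2}{\dt}+\kp g_n)^{-1}$. For the nonlinear/stabilization contribution, since $\kp\ge\|f'\|_{C[-\beta,\beta]}$ and $\|\widehat{u}^{n+\frac12}\|_\infty\le\beta$, Lemma \ref{lem_nonlinear} yields $\|f(\widehat{u}^{n+\frac12})+\kp\widehat{u}^{n+\frac12}\|_\infty\le\kp\beta$, so this last term contributes at most $2g_n\kp\beta$ in the $\infty$-norm.

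The crux is the explicit operator $R$. Its off-diagonal entries $\eps^2/h^2$ are already positive, while its diagonal entry equals $\tfrac{2}{\dt}-\kp g_n$ minus the diffusive contribution $4\eps^2/h^2$; this diagonal is nonnegative precisely when $\dt$ obeys a CFL-type bound, and replacing $g_n$ by its uniform upper bound $G^*$ turns that into the $n$-independent restriction of the form \eqref{sav2_mbp_dt}. Under this condition one also has $\tfrac{2}{\dt}-\kp g_n\ge0$, so $R$ becomes an entrywise nonnegative matrix whose rows all sum to $\tfrac{2}{\dt}-\kp g_n$ (the rows of $\eps^2\Delta_h$ summing to zero). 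Consequently $\|R\,u^n\|_\infty\le(\tfrac{2}{\dt}-\kp g_n)\beta$ from $\|u^n\|_\infty\le\beta$, and combining the three estimates gives
\[
\|u^{n+1}\|_\infty\le\Bigl(\tfrac{2}{\dt}+\kp g_n\Bigr)^{-1}\Bigl[\bigl(\tfrac{2}{\dt}-\kp g_n\bigr)\beta+2g_n\kp\beta\Bigr]=\beta,
\]
which closes the induction and establishes \eqref{dismbp}.

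I expect the main obstacle to be precisely the treatment of the explicit term $R\,u^n$. Unlike the first-order scheme, where the entire Laplacian sits on the implicit side and the MBP holds for all $\dt$, the Crank--Nicolson splitting moves half of the diffusion to the explicit side, and the crude bound $\|R\,u^n\|_\infty\le\|R\|_\infty\|u^n\|_\infty$ is compatible with the MBP only when $R$ has no negative entries; this nonnegativity of the forward operator is the sole source of the time-step constraint. The other genuine wrinkle, the step-dependence of $g_n$, is handled cleanly by the uniform bounds $0<g_n\le G^*$; the only care needed is that those bounds be available \emph{before} the corrector step, i.e.\ that the predictor already satisfies the MBP, which Theorem \ref{thm_sav1_mbp} guarantees unconditionally.
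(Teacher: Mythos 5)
Your proof follows essentially the same route as the paper's: MBP of the predictor via Theorem \ref{thm_sav1_mbp}, the bounds $0<g_n\le G^*$ via Theorem \ref{thm_sav2_es} and (the argument of) Corollary \ref{cor_g_upbound}, Lemma \ref{lem_nonlinear} for the term $f(\widehat{u}^{n+\frac12})+\kp\widehat{u}^{n+\frac12}$, Lemma \ref{lem_lapdiff} for the implicit operator, and a row-sum computation for the explicit operator $R$. One caveat: your (correct) observation that entrywise nonnegativity of $R$ hinges on the diagonal $\frac{2}{\dt}-\kp g_n-\frac{4\eps^2}{h^2}\ge 0$ actually translates into $\dt\le\big(\frac{\kp G^*}{2}+\frac{2\eps^2}{h^2}\big)^{-1}$, which is slightly stronger than \eqref{sav2_mbp_dt}; the stated condition only yields $\frac{2}{\dt}-\kp g_n\ge\frac{2\eps^2}{h^2}$, under which the diagonal can still be negative and the identity $\|R\|_\infty=\frac{2}{\dt}-\kp g_n$ (asserted without comment in the paper and required by your estimate $\|Ru^n\|_\infty\le(\frac{2}{\dt}-\kp g_n)\beta$) can fail. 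This factor-of-two discrepancy is present in the paper's own proof as well, so your argument is faithful to it, but the time-step restriction you genuinely establish is the marginally tighter one.
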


\begin{proof}
Suppose $(u^n,s^n)$ is given and $\|u^n\|_\infty\le\beta$ for some $n$.
By Theorems \ref{thm_sav1_mbp} and \ref{thm_sav2_es},
we have $\|\widehat{u}^{n+\frac{1}{2}}\|_\infty\le\beta$
and $\widehat{s}^{n+\frac{1}{2}}\le E_h(\uinit)$.
Then, we know that $0< g(\widehat{u}^{n+\frac{1}{2}},\widehat{s}^{n+\frac{1}{2}})\le G^*$
by the similar analysis as Corollary \ref{cor_g_upbound}.
The condition \eqref{sav2_mbp_dt} implies
\begin{equation*}
\frac{2}{\dt}-\kp g(\widehat{u}^{n+\frac{1}{2}},\widehat{s}^{n+\frac{1}{2}})\ge\frac{2\eps^2}{h^2}.
\end{equation*}
According to the definition of the matrix $\infty$-norm,
we have
\[
\Big\|\Big(\frac{2}{\dt}-\kp g(\widehat{u}^{n+\frac{1}{2}},\widehat{s}^{n+\frac{1}{2}})\Big)I+\eps^2\Delta_h\Big\|_\infty
= \frac{2}{\dt}-\kp g(\widehat{u}^{n+\frac{1}{2}},\widehat{s}^{n+\frac{1}{2}}).
\]
Since $\kp\ge\|f'\|_{C[-\beta,\beta]}$ and $\|\widehat{u}^{n+\frac{1}{2}}\|_\infty\le\beta$,
according to Lemma \ref{lem_nonlinear}, we have
\[
\|f(\widehat{u}^{n+\frac{1}{2}}) + \kp\widehat{u}^{n+\frac{1}{2}}\|_\infty \le \kp\beta.
\]
Therefore, using Lemma \ref{lem_lapdiff}, we obtain from \eqref{lem_sav2_pf} that
\[
\|u^{n+1}\|_\infty
\le \Big(\frac{2}{\dt}+\kp g(\widehat{u}^{n+\frac{1}{2}},\widehat{s}^{n+\frac{1}{2}})\Big)^{-1}
\Big[\Big(\frac{2}{\dt}-\kp g(\widehat{u}^{n+\frac{1}{2}},\widehat{s}^{n+\frac{1}{2}})\Big) \beta + 2\kp g(\widehat{u}^{n+\frac{1}{2}},\widehat{s}^{n+\frac{1}{2}})\beta \Big] = \beta.
\]
By induction, we have $\|u^n\|_\infty\le\beta$ for all $n$.
\end{proof}

\begin{remark}
\label{rmk_sav2_gbound}
Theorem \ref{thm_sav2_mbp} implies that $0<  g(u^n,s^n)\le G^*$ and $0< g(\widehat{u}^{n+\frac{1}{2}},\widehat{s}^{n+\frac{1}{2}})\le G^*$ hold for all $n$.
\end{remark}

\begin{remark}
\label{rmk_sav2_mbp}
The  condition \eqref{sav2_mbp_dt} on the time step size implies $\dt=\mathcal{O}(h^2/\eps^2)$,
which is the same as those enforced in \cite{HoLe20,HoTaYa17}.
This restriction comes essentially from the explicit term $\Delta_hu^n$
due to the use of the Crank--Nicolson approximation,
which also means that
the second-order scheme \eqref{eq_sav2stab} cannot preserve the MBP unconditionally
even though we introduce the stabilization term.
In practical computations, $ g(\widehat{u}^{n+\frac{1}{2}},\widehat{s}^{n+\frac{1}{2}})\approx1$
so that the requirement for the time step size can be set to be $\dt \le ( \frac{\kp}{2} + \frac{\eps^2}{h^2} )^{-1}$ in order to preserve the MBP,
which is later used in our numerical experiments.
\end{remark}

Similar to the analysis for the sESAV1 scheme,
we can show that both $\{  g(u^n,s^n)\}$ and $\{ g(\widehat{u}^{n+\frac{1}{2}},\widehat{s}^{n+\frac{1}{2}})\}$ have uniform positive lower bounds.

\begin{lemma}
\label{lem_sav2_h2bound}
Given a fixed time $T>0$.
If $h\le h_0$, $\kp\ge \|f'\|_{C[-\beta,\beta]}$, $\|\uinit\|_\infty\le\beta$,
and $\dt\le 1$ satisfying \eqref{sav2_mbp_dt},
there exists a constant $M>0$ depending on  $C_*$, $|\Omega|$, $T$, $\uinit$, $\kp$, $\eps$, and $\|f\|_{C^1[-\beta,\beta]}$ such that
\begin{align*}
\dt^{-1}\|\widehat{u}^{n+\frac{1}{2}}-u^n\| + \|\Delta_h\widehat{u}^{n+\frac{1}{2}}\| \le M, \\
\dt^{-1}\|u^{n+1}-u^n\| + \|\Delta_h u^{n+1}\|\le M,
\end{align*}
for $0 \le n \le \lfloor T/\dt\rfloor-1$.
\end{lemma}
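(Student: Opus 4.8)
The plan is to mirror the strategy of Lemma \ref{lem_sav1_h2bound}, deriving a recursion for the discrete $H^2$ semi-norm $\|\Delta_h u^{n+1}\|^2$, but now in two stages matching the predictor \eqref{eq_sav2stab0} and the Crank--Nicolson corrector \eqref{eq_sav2stab}. Throughout I would invoke the MBP from Theorem \ref{thm_sav2_mbp}, so that $\|u^n\|_\infty\le\beta$ and $\|\widehat{u}^{n+\frac{1}{2}}\|_\infty\le\beta$, together with the uniform upper bounds $0<g(u^n,s^n)\le G^*$ and $0<g(\widehat{u}^{n+\frac{1}{2}},\widehat{s}^{n+\frac{1}{2}})\le G^*$ from Remark \ref{rmk_sav2_gbound}, where $G^*$ is the constant of Corollary \ref{cor_g_upbound}. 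For brevity write $\widehat{g}:=g(\widehat{u}^{n+\frac{1}{2}},\widehat{s}^{n+\frac{1}{2}})$ and $F_0:=\|f\|_{C[-\beta,\beta]}$.

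\textbf{Step 1 (predictor).} Since \eqref{eq_sav2stab0a} is precisely the sESAV1 scheme run with step size $\dt/2$ and coefficient $g(u^n,s^n)\le G^*$, I would take the discrete inner product with $\dt\,\Delta_h^2\widehat{u}^{n+\frac{1}{2}}$ and repeat verbatim the manipulation leading to \eqref{lem_sav1_h2bound_pf}, using the discrete Poincar\'e inequality \eqref{222}, Young's inequality, and dropping the nonnegative gradient term. This yields a recursion of the form $\|\Delta_h\widehat{u}^{n+\frac{1}{2}}\|^2\le(1+C\dt)\|\Delta_h u^n\|^2$, with $C$ depending on $\kp$, $G^*$, $\|f'\|_{C[-\beta,\beta]}$, $C_\Omega$, and $\eps$, which controls $\|\Delta_h\widehat{u}^{n+\frac{1}{2}}\|$ once $\|\Delta_h u^n\|$ is bounded.

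\textbf{Step 2 (corrector).} I would rewrite the stabilization term in \eqref{eq_sav2staba} as $-\frac{1}{2}\kp\widehat{g}\,(u^{n+1}-2\widehat{u}^{n+\frac{1}{2}}+u^n)$, as noted below \eqref{eq_sav2stab0}, and take the discrete inner product of \eqref{eq_sav2staba} with $2\dt\,\Delta_h^2 u^{n+\frac{1}{2}}$. The left-hand side telescopes cleanly, since $2\Delta_h u^{n+\frac{1}{2}}=\Delta_h u^{n+1}+\Delta_h u^n$ gives $\langle\Delta_h u^{n+1}-\Delta_h u^n,\Delta_h u^{n+1}+\Delta_h u^n\rangle=\|\Delta_h u^{n+1}\|^2-\|\Delta_h u^n\|^2$; the diffusion term produces $-2\dt\eps^2\|\nabla_h\Delta_h u^{n+\frac{1}{2}}\|^2$; the nonlinear term, after one summation-by-parts and a Young split against the diffusion term, is bounded by $\frac{\dt(G^*)^2}{\eps^2}\|\nabla_h f(\widehat{u}^{n+\frac{1}{2}})\|^2\le C\dt\|\Delta_h\widehat{u}^{n+\frac{1}{2}}\|^2$ via \eqref{222}; and the stabilization term is bounded by $C\dt(\|\Delta_h u^{n+1}\|^2+\|\Delta_h\widehat{u}^{n+\frac{1}{2}}\|^2+\|\Delta_h u^n\|^2)$. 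Absorbing the $\dt\|\Delta_h u^{n+1}\|^2$ contribution into the left-hand side for small $\dt$ and inserting the Step~1 estimate for $\|\Delta_h\widehat{u}^{n+\frac{1}{2}}\|^2$, I obtain $\|\Delta_h u^{n+1}\|^2\le(1+C\dt)\|\Delta_h u^n\|^2$; recursion and the discrete Gronwall inequality then give $\|\Delta_h u^{n+1}\|^2\le\e^{CT}\|\Delta_h\uinit\|^2$, whose $h$-independence follows from \eqref{uinit_approx}. This simultaneously bounds $\|\Delta_h\widehat{u}^{n+\frac{1}{2}}\|$ through Step~1. For the difference quotients, rearranging \eqref{eq_sav2stab0a} gives $(\tfrac{2}{\dt}+\kp g(u^n,s^n))\|\widehat{u}^{n+\frac{1}{2}}-u^n\|\le\eps^2\|\Delta_h\widehat{u}^{n+\frac{1}{2}}\|+G^*F_0|\Omega|^{\frac{1}{2}}$, and dropping the nonnegative $\kp g$ factor yields $\dt^{-1}\|\widehat{u}^{n+\frac{1}{2}}-u^n\|\le M$; likewise $\|\delta_t u^{n+1}\|\le\eps^2\|\Delta_h u^{n+\frac{1}{2}}\|+G^*F_0|\Omega|^{\frac{1}{2}}+\kp G^*\|u^{n+\frac{1}{2}}-\widehat{u}^{n+\frac{1}{2}}\|$, where $\|\Delta_h u^{n+\frac{1}{2}}\|\le\frac{1}{2}(\|\Delta_h u^{n+1}\|+\|\Delta_h u^n\|)$ is already controlled and the last factor is crudely $\le2\beta|\Omega|^{\frac{1}{2}}$ by the MBP, giving $\dt^{-1}\|u^{n+1}-u^n\|=\|\delta_t u^{n+1}\|\le M$.

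The delicate point, and the main obstacle, is the coupled stabilization term in the corrector: because $u^{n+\frac{1}{2}}$ contains the unknown $u^{n+1}$, the $H^2$ test produces a $\|\Delta_h u^{n+1}\|^2$ term on the right-hand side as well as a $\|\Delta_h\widehat{u}^{n+\frac{1}{2}}\|^2$ term, so the recursion closes only after absorbing the former using the smallness of $\dt$ and feeding in the predictor estimate of Step~1 for the latter. Keeping every constant uniform in $n$ hinges on the uniform bound $G^*$ and the MBP, both already guaranteed by Theorem \ref{thm_sav2_mbp} and Remark \ref{rmk_sav2_gbound}; note that the lower bound $G_*$ is \emph{not} needed here, only the upper bound.
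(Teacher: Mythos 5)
Your proposal is correct in outline and follows essentially the same two-stage route as the paper: first control $\|\Delta_h\widehat{u}^{n+\frac{1}{2}}\|$ by reusing the sESAV1 estimate \eqref{lem_sav1_h2bound_pf} with step $\dt/2$, then test the corrector \eqref{eq_sav2staba} with $2\dt\Delta_h^2u^{n+\frac{1}{2}}$ so that the left-hand side telescopes to $\|\Delta_h u^{n+1}\|^2-\|\Delta_h u^n\|^2$, close the recursion, and read off the difference quotients directly from \eqref{eq_sav2stab0a} and \eqref{eq_sav2staba} using the MBP and the uniform bound $G^*$. The one place where you diverge is the point you yourself flag as ``the main obstacle'': the stabilization term $-\kp\widehat{g}\,(u^{n+\frac{1}{2}}-\widehat{u}^{n+\frac{1}{2}})$ paired with $2\dt\Delta_h^2u^{n+\frac{1}{2}}$. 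You bound it crudely by $C\dt(\|\Delta_h u^{n+1}\|^2+\|\Delta_h\widehat{u}^{n+\frac{1}{2}}\|^2+\|\Delta_h u^n\|^2)$ and then absorb the $\|\Delta_h u^{n+1}\|^2$ contribution into the left-hand side, which forces an additional smallness condition $C\dt\le\frac{1}{2}$ (with $C\sim\kp G^*$) beyond the stated hypotheses $\dt\le1$ and \eqref{sav2_mbp_dt}; strictly speaking, your argument then proves the lemma only for $\dt$ below this extra threshold. The obstacle is in fact illusory: after summation by parts the term equals
\begin{equation*}
-2\kp\dt\,\widehat{g}\,\|\Delta_h u^{n+\frac{1}{2}}\|^2
+2\kp\dt\,\widehat{g}\,\langle\Delta_h\widehat{u}^{n+\frac{1}{2}},\Delta_h u^{n+\frac{1}{2}}\rangle
\le \kp\dt\,\widehat{g}\,\|\Delta_h\widehat{u}^{n+\frac{1}{2}}\|^2
-\kp\dt\,\widehat{g}\,\|\Delta_h u^{n+\frac{1}{2}}\|^2
\le \kp G^*\dt\,\|\Delta_h\widehat{u}^{n+\frac{1}{2}}\|^2,
\end{equation*}
so no $\|\Delta_h u^{n+1}\|^2$ appears on the right and no absorption is needed; this is how the paper arrives at $\|\Delta_h u^{n+1}\|^2\le\|\Delta_h u^n\|^2+C\dt\|\Delta_h\widehat{u}^{n+\frac{1}{2}}\|^2$ under only $\dt\le1$. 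With that single refinement your proof matches the paper's; everything else, including the observation that only the upper bound $G^*$ (and not $G_*$) is needed, is in order.
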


\begin{proof}
Since $\widehat{u}^{n+\frac{1}{2}}$ is the solution to the sESAV1 substep \eqref{eq_sav2stab0},
according to \eqref{lem_sav1_h2bound_pf},
we have
\begin{equation}
\label{lem_sav2_h2bound_pf}
\|\Delta_h\widehat{u}^{n+\frac{1}{2}}\|^2
\le \Big(1+\frac{G^*\kp}{2}+\frac{(G^* \|f'\|_{C[-\beta,\beta]} C_\Omega)^2}{4\eps^2}\Big) \|\Delta_h u^n\|^2,
\end{equation}
where we used $\dt\le1$.

Taking the discrete inner product of \eqref{eq_sav2staba} with $2\dt\Delta_h^2u^{n+\frac{1}{2}}$,
using the fact $0< g(\widehat{u}^{n+\frac{1}{2}},\widehat{s}^{n+\frac{1}{2}})\le G^*$,
and conducting the similar analysis as the proof of Lemma \ref{lem_sav1_h2bound},
we can obtain
\begin{align*}
\|\Delta_h u^{n+1}\|^2
\le \|\Delta_h u^n\|^2
+ \Big(G^*\kp + \frac{(G^* \|f'\|_{C[-\beta,\beta]} C_\Omega)^2}{2\eps^2}\Big) \dt \|\Delta_h\widehat{u}^{n+\frac{1}{2}}\|^2.
\end{align*}
Substituting \eqref{lem_sav2_h2bound_pf} into the above inequality, we have
\begin{align*}
\|\Delta_h u^{n+1}\|^2
& \le \Big[1 + \Big(G^*\kp + \frac{(G^* \|f'\|_{C[-\beta,\beta]} C_\Omega)^2}{2\eps^2}\Big) \\
& \qquad \cdot \Big(1+\frac{G^*\kp}{2}+\frac{(C_\Omega \|f'\|_{C[-\beta,\beta]} G^*)^2}{4\eps^2}\Big) \dt \Big] \|\Delta_h u^n\|^2.
\end{align*}
By recursion, we can obtain a uniform upper bound for $\|\Delta_h u^{n+1}\|$.
Then, by \eqref{lem_sav2_h2bound_pf} we also can get the upper bound for $\|\Delta_h\widehat{u}^{n+\frac{1}{2}}\|$.

Finally, as a consequence of the above analysis,
using \eqref{eq_sav2stab0a} and \eqref{eq_sav2staba}, we also get
the boundedness of $\dt^{-1}\|\widehat{u}^{n+\frac{1}{2}}-u^n\|$ and $\dt^{-1}\|u^{n+1}-u^n\|$, and the proof is completed.
\end{proof}

\begin{corollary}
\label{cor_sav2_g_bound}
Given a fixed time $T>0$.
If $h\le h_0$, $\kp\ge \|f'\|_{C[-\beta,\beta]}$, $\|\uinit\|_\infty\le\beta$,
and $\dt\le 1$ satisfying \eqref{sav2_mbp_dt},
there exists a constant ${\widetilde G}_{*}>0$ such that
$  g(u^n,s^n)\ge {\widetilde G}_{*}$ and $ g(\widehat{u}^{n+\frac{1}{2}},\widehat{s}^{n+\frac{1}{2}})\ge {\widetilde G}_{*}$,
where ${\widetilde G}_{*}$ depends on $C_*$, $|\Omega|$, $T$, $\uinit$, $\kp$, $\eps$, and $\|f\|_{C^1[-\beta,\beta]}$.
\end{corollary}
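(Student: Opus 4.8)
The plan is to reduce everything to uniform lower bounds for the two scalar sequences $\{s^n\}$ and $\{\widehat{s}^{n+\frac{1}{2}}\}$, exactly in the spirit of Corollary \ref{cor_sav1_g_bound}. By the MBP established in Theorem \ref{thm_sav2_mbp} we have $\|u^n\|_\infty\le\beta$ and $\|\widehat{u}^{n+\frac{1}{2}}\|_\infty\le\beta$; since $F$ is continuous and hence bounded on $[-\beta,\beta]$, the quantities $E_{1h}(u^n)=\<F(u^n),1\>$ and $E_{1h}(\widehat{u}^{n+\frac{1}{2}})$ are bounded from above by a constant depending only on $|\Omega|$ and $\max_{[-\beta,\beta]}F$. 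Consequently $\exp\{-E_{1h}(\cdot)\}$ admits a positive lower bound, and by the definition \eqref{gg} of $g$ it suffices to bound $s^n$ and $\widehat{s}^{n+\frac{1}{2}}$ from below uniformly for $0\le n\le\lfloor T/\dt\rfloor$.

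Next I would estimate the increment of $\{s^n\}$ from the update \eqref{eq_sav2stabb}. Writing $u^{n+1}-u^n=\dt\,\delta_t u^{n+1}$ and $u^{n+\frac{1}{2}}-\widehat{u}^{n+\frac{1}{2}}=\frac{1}{2}(u^{n+1}-u^n)+(u^n-\widehat{u}^{n+\frac{1}{2}})$, I invoke Lemma \ref{lem_sav2_h2bound} to get $\|\delta_t u^{n+1}\|\le M$, $\|u^{n+1}-u^n\|\le M\dt$, and $\|\widehat{u}^{n+\frac{1}{2}}-u^n\|\le M\dt$, so that $\|u^{n+\frac{1}{2}}-\widehat{u}^{n+\frac{1}{2}}\|\le\frac{3}{2}M\dt$. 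Together with $g(\widehat{u}^{n+\frac{1}{2}},\widehat{s}^{n+\frac{1}{2}})\le G^*$ from Remark \ref{rmk_sav2_gbound} and $\|f(\widehat{u}^{n+\frac{1}{2}})\|\le F_0|\Omega|^{\frac{1}{2}}$ (a consequence of the MBP), applying the Cauchy--Schwarz inequality to each term of \eqref{eq_sav2stabb} yields $|s^{n+1}-s^n|\le C\dt$, where the $O(\dt^2)$ contribution from the stabilization term is absorbed using $\dt\le1$ and $C$ depends only on $G^*$, $M$, $F_0$, $\kp$, and $|\Omega|$. A telescoping recursion starting from $s^0=E_{1h}(\uinit)\ge-C_*$ then gives $s^n\ge-C_*-CT$.

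Finally, I would obtain the lower bound for $\widehat{s}^{n+\frac{1}{2}}$ from \eqref{eq_sav2stab0b}: since $\widehat{s}^{n+\frac{1}{2}}=s^n-g(u^n,s^n)\<f(u^n),\widehat{u}^{n+\frac{1}{2}}-u^n\>$, the bounds $g(u^n,s^n)\le G^*$, $\|f(u^n)\|\le F_0|\Omega|^{\frac{1}{2}}$, and $\|\widehat{u}^{n+\frac{1}{2}}-u^n\|\le M\dt\le M$ combine with the just-established bound $s^n\ge-C_*-CT$ to give $\widehat{s}^{n+\frac{1}{2}}\ge-C_*-CT-G^*F_0|\Omega|^{\frac{1}{2}}M$. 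Taking the smaller of the two lower bounds as $s_*$ and setting ${\widetilde G}_{*}=\exp\{s_*-|\Omega|\max_{[-\beta,\beta]}F\}>0$ furnishes the desired common lower bound for both $g(u^n,s^n)$ and $g(\widehat{u}^{n+\frac{1}{2}},\widehat{s}^{n+\frac{1}{2}})$, with the stated dependence inherited through $G^*$ and $M$. The argument is essentially routine given the preceding lemmas; the only points requiring care are the order of dependence --- the bound on $\widehat{s}^{n+\frac{1}{2}}$ must be derived after, and from, the bound on $s^n$ --- and correctly discarding the higher-order stabilization term via the step-size restriction $\dt\le1$.
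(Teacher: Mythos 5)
Your proposal is correct and follows essentially the same route as the paper: reduce to uniform lower bounds on $\{s^n\}$ and $\{\widehat{s}^{n+\frac{1}{2}}\}$, bound the increment $|s^{n+1}-s^n|\le C\dt$ via Lemma \ref{lem_sav2_h2bound} and the upper bound $G^*$ on $g$, telescope from $s^0\ge -C_*$, and then bound $\widehat{s}^{n+\frac{1}{2}}$ from below using \eqref{eq_sav2stab0b} and the already-established bound on $s^n$. Your treatment is in fact slightly more explicit than the paper's (which defers the increment estimate to ``similar analysis as Corollary \ref{cor_sav1_g_bound}''), including the correct absorption of the $O(\dt^2)$ stabilization contribution using $\dt\le 1$.
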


\begin{proof}
It suffices to show the existence of the lower bounds of $\{s^n\}$ and $\{\widehat{s}^{n+\frac{1}{2}}\}$.
By Lemma \ref{lem_sav2_h2bound}, we have $\|u^{n+1}-u^n\|\le M\dt$.
From \eqref{eq_sav2stabb},
the similar analysis as Corollary \ref{cor_sav1_g_bound} leads to the lower boundedness of $\{s^n\}$.
Then, since $\|\widehat{u}^{n+\frac{1}{2}}-u^n\|\le M\dt\le M$,
we can derive from \eqref{eq_sav2stab0b} to give
\[
\widehat{s}^{n+\frac{1}{2}} \ge s^n - G^* F_0 |\Omega|^{\frac{1}{2}} M,
\]
which completes the proof.
\end{proof}

\subsubsection{Error estimates}

It is easy to check that the exact solution $u_e$ to \eqref{AllenCahn} with $s_e(t)=E_1(u_e(t))$ satisfies
\begin{subequations}
\label{sav2trun}
\begin{align}
&\frac{u_e(t_{n+1})-u_e(t_n)}{\dt}
 = \frac{\eps^2}{2} \Delta_h (u_e(t_{n+1})+u_e(t_n))
+ g(u_e(t_{n+\frac{1}{2}}),s_e(t_{n+\frac{1}{2}})) f(u_e(t_{n+\frac{1}{2}})) \nn \\
& \qquad - \kp g(u_e(t_{n+\frac{1}{2}}),s_e(t_{n+\frac{1}{2}})) \Big(\frac{u_e(t_{n+1})+u_e(t_n)}{2}-u_e(t_{n+\frac{1}{2}})\Big)
+ R_{2u}^n, \label{sav2truna} \\
&\frac{s_e(t_{n+1})-s_e(t_n)}{\dt}
 = - g(u_e(t_{n+\frac{1}{2}}),s_e(t_{n+\frac{1}{2}})) \Big\<f(u_e(t_{n+\frac{1}{2}})),\frac{u_e(t_{n+1})-u_e(t_n)}{\dt}\Big\> \nn \\
& \qquad + \kp g(u_e(t_{n+\frac{1}{2}}),s_e(t_{n+\frac{1}{2}}))
\Big\<\frac{u_e(t_{n+1})+u_e(t_n)}{2}-u_e(t_{n+\frac{1}{2}}),\frac{u_e(t_{n+1})-u_e(t_n)}{\dt}\Big\>
+ R_{2s}^n, \label{sav2trunb}
\end{align}
\end{subequations}
where the truncation errors $R_{2u}^n$ and $R_{2s}^n$ satisfy
\begin{equation}
\label{sav2trunerr}
\|R_{2u}^n\|\le C_e(\dt^2+h^2), \qquad |R_{2s}^n|\le C_e(\dt^2+h^2).
\end{equation}
Apart from the numerical error functions $e_u^n$ and $e_s^n$ defined by \eqref{errorfuns},
let us also define
\[
\widehat{e}_u^{n+\frac{1}{2}} = \widehat{u}^{n+\frac{1}{2}} - u_e(t_{n+\frac{1}{2}}), \qquad
\widehat{e}_s^{n+\frac{1}{2}} = \widehat{s}^{n+\frac{1}{2}} - s_e(t_{n+\frac{1}{2}}).
\]
We first present an estimate for $\widehat{e}_u^{n+\frac{1}{2}}$ and $\widehat{e}_s^{n+\frac{1}{2}}$,
which will be used in the proof of the error estimate for the sESAV2 scheme \eqref{eq_sav2stab}.
Recalling the proof of Theorem \ref{thm_sav1_error} for the sESAV1 scheme,
the error equations with respect to $\widehat{e}_u^{n+\frac{1}{2}}$ and $\widehat{e}_s^{n+\frac{1}{2}}$ read as
\begin{subequations}
\label{sav2err1}
\begin{align}
\frac{\widehat{e}_u^{n+\frac{1}{2}} - e_u^n}{\dt/2}
& = \eps^2 \Delta_h \widehat{e}_u^{n+\frac{1}{2}} + g(u^n,s^n)f(u^n) 
- g(u_e(t_n),s_e(t_n))f(u_e(t_n)) - \kp g(u^n,s^n) (\widehat{e}_u^{n+\frac{1}{2}}-e_u^n) \nn \\
& \quad  + \kp (g(u_e(t_n),s_e(t_n))-g(u^n,s^n)) (u_e(t_{n+\frac{1}{2}})-u_e(t_n))
- \widehat{R}_{1u}^n, \label{sav2err1a} \\
\widehat{e}_s^{n+\frac{1}{2}} - e_s^n
& = \< g(u_e(t_n),s_e(t_n))f(u_e(t_n)) - g(u^n,s^n)f(u^n), u_e(t_{n+\frac{1}{2}})-u_e(t_n) \> \nn \\
& \quad - g(u^n,s^n) \<f(u^n),\widehat{e}_u^{n+\frac{1}{2}} - e_u^n\>
- \frac{\dt}{2}\widehat{R}_{1s}^n, \label{sav2err1b}
\end{align}
\end{subequations}
where
\begin{equation}
\label{sav2trunerr1}
\|\widehat{R}_{1u}^n\|\le C_e(\dt+h^2), \qquad |\widehat{R}_{1s}^n|\le C_e(\dt+h^2).
\end{equation}

\begin{lemma}
Suppose that $h\le h_0$ and $\|u^n\|_\infty\le\beta$.
If $\dt$ is small sufficiently, we have
\begin{equation}
\label{sav2err_pf11}
\|\widehat{e}_u^{n+\frac{1}{2}}\|^2 + |\widehat{e}_s^{n+\frac{1}{2}}|^2
\le \widehat{C} (\|e_u^n\|^2 + |e_s^n|^2) + \widehat{C} C_e^2(\dt^2+\dt h^2)^2,
\end{equation}
where the constant $\widehat{C}>0$ depends on $C_*$, $|\Omega|$, $u_e$, $\kp$, and $\|f\|_{C^1[-\beta,\beta]}$.
\end{lemma}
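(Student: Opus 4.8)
The plan is to mirror the one-step energy argument behind the sESAV1 error estimate (Theorem \ref{thm_sav1_error}), now applied to the predictor substep \eqref{eq_sav2stab0} over the half step $\dt/2$, and crucially to extract from the $u$-component not only a bound on $\|\widehat{e}_u^{n+\frac{1}{2}}\|$ but also on the increment $\|\widehat{e}_u^{n+\frac{1}{2}}-e_u^n\|$, since the latter is exactly what couples into the $s$-component. The two components must therefore be handled in this order.

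First I would take the discrete inner product of \eqref{sav2err1a}, after multiplying by $\dt/2$, with $\widehat{e}_u^{n+\frac{1}{2}}$. Using the identity $\<a-b,a\>=\frac{1}{2}(\|a\|^2-\|b\|^2+\|a-b\|^2)$, the left-hand side becomes $\frac{1}{2}(\|\widehat{e}_u^{n+\frac{1}{2}}\|^2-\|e_u^n\|^2+\|\widehat{e}_u^{n+\frac{1}{2}}-e_u^n\|^2)$. The diffusion term contributes $-\frac{\eps^2\dt}{2}\|\nabla_h\widehat{e}_u^{n+\frac{1}{2}}\|^2\le0$, and the stabilization term $-\frac{\kp\dt}{2}g(u^n,s^n)\<\widehat{e}_u^{n+\frac{1}{2}}-e_u^n,\widehat{e}_u^{n+\frac{1}{2}}\>$, again via the same identity and $g(u^n,s^n)>0$, supplies a nonnegative $\|\widehat{e}_u^{n+\frac{1}{2}}\|^2$-contribution on the left at the cost of a $-\frac{\kp\dt}{4}g(u^n,s^n)\|e_u^n\|^2\ge-\frac{\kp G^*\dt}{4}\|e_u^n\|^2$ term (bounded by Corollary \ref{cor_g_upbound}) moved to the right. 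The three remaining right-hand side terms are estimated by Cauchy--Schwarz and Young's inequality: the nonlinear difference via \eqref{lem_sav1_nonlinear2}, the $g$-difference via \eqref{lem_sav1_nonlinear1} together with the smoothness bound $\|u_e(t_{n+\frac{1}{2}})-u_e(t_n)\|\le C\dt$, and the truncation term via \eqref{sav2trunerr1}, the $\dt/2$ prefactor producing precisely the $(\dt^2+\dt h^2)^2$ form once squared. Splitting each product so that the $\|\widehat{e}_u^{n+\frac{1}{2}}\|^2$ factor carries a small coefficient and absorbing those into the leading $\frac{1}{2}\|\widehat{e}_u^{n+\frac{1}{2}}\|^2$ (this is where $\dt$ small is used) yields
$$\|\widehat{e}_u^{n+\frac{1}{2}}\|^2+\|\widehat{e}_u^{n+\frac{1}{2}}-e_u^n\|^2\le \widehat{C}(\|e_u^n\|^2+|e_s^n|^2)+\widehat{C}C_e^2(\dt^2+\dt h^2)^2.$$

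Next I would bound $|\widehat{e}_s^{n+\frac{1}{2}}|$ directly from \eqref{sav2err1b} by the triangle inequality. The first inner product is controlled by \eqref{lem_sav1_nonlinear2} and $\|u_e(t_{n+\frac{1}{2}})-u_e(t_n)\|\le C\dt$; the coupling term is bounded using $g(u^n,s^n)\le G^*$ and $\|f(u^n)\|\le F_0|\Omega|^{\frac{1}{2}}$ by a constant times $\|\widehat{e}_u^{n+\frac{1}{2}}-e_u^n\|$; the last term is $\frac{\dt}{2}|\widehat{R}_{1s}^n|$. Squaring, using $(\sum_i a_i)^2\le 4\sum_i a_i^2$, and substituting the increment bound $\|\widehat{e}_u^{n+\frac{1}{2}}-e_u^n\|^2$ from the previous step gives $|\widehat{e}_s^{n+\frac{1}{2}}|^2\le \widehat{C}(\|e_u^n\|^2+|e_s^n|^2)+\widehat{C}C_e^2(\dt^2+\dt h^2)^2$. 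Adding the two estimates produces \eqref{sav2err_pf11}.

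The hard part is the coupling term $g(u^n,s^n)\<f(u^n),\widehat{e}_u^{n+\frac{1}{2}}-e_u^n\>$ in the $s$-equation: it involves the \emph{increment} of the $u$-error rather than $\widehat{e}_u^{n+\frac{1}{2}}$ itself, so a careless bound would lose a half power in $\dt$. The remedy, and the reason the energy-type inner product is the right tool, is that this argument returns $\|\widehat{e}_u^{n+\frac{1}{2}}-e_u^n\|^2$ with a favorable sign on the left, delivering exactly the control on the increment needed to close the $s$-estimate without degrading the order. Everything else is routine, resting on the uniform bound $g\le G^*$ from Corollary \ref{cor_g_upbound} and the perturbation estimates of Lemma \ref{lem_sav1_nonlinear}, both available here because the MBP ensures $\|u^n\|_\infty\le\beta$ and $\|\widehat{u}^{n+\frac{1}{2}}\|_\infty\le\beta$.
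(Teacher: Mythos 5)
Your proposal is correct and follows essentially the same route as the paper: testing the $u$-error equation with $\widehat{e}_u^{n+\frac{1}{2}}$ so that the polarization identity retains $\|\widehat{e}_u^{n+\frac{1}{2}}-e_u^n\|^2$ with a favorable sign, invoking Corollary \ref{cor_g_upbound} and Lemma \ref{lem_sav1_nonlinear} for the variable-coefficient terms, and then using that increment bound to close the coupling term in the $s$-equation. The only cosmetic difference is that the paper handles the $s$-component by multiplying \eqref{sav2err1b} by $2\widehat{e}_s^{n+\frac{1}{2}}$ and applying Young's inequality rather than your triangle-inequality-then-square step; the two are interchangeable.
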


\begin{proof}
Taking the discrete inner product of \eqref{sav2err1a} with $\dt\widehat{e}_u^{n+\frac{1}{2}}$
and rearranging the terms, we have
\begin{align*}
& \Big(1+\frac{\kp  g(u^n,s^n)}{2}\dt\Big)
\big(\|\widehat{e}_u^{n+\frac{1}{2}}\|^2 - \|e_u^n\|^2 + \|\widehat{e}_u^{n+\frac{1}{2}} - e_u^n\|^2\big)
+ \eps^2 \dt \|\nabla_h \widehat{e}_u^{n+\frac{1}{2}}\|^2 \\
& \qquad = \dt \<g(u^n,s^n)f(u^n) - g(u_e(t_n),s_e(t_n))f(u_e(t_n)), \widehat{e}_u^{n+\frac{1}{2}}\> \\
& \qquad\quad
+ \kp\dt \<(g(u_e(t_n),s_e(t_n))-g(u^n,s^n)) (u_e(t_{n+\frac{1}{2}})-u_e(t_n)), \widehat{e}_u^{n+\frac{1}{2}}\>
- \dt \<\widehat{R}_{1u}^n, \widehat{e}_u^{n+\frac{1}{2}}\>.
\end{align*}
Using Young's inequality, we then get
\begin{equation*}
- \dt \<\widehat{R}_{1u}^n, \widehat{e}_u^{n+\frac{1}{2}}\>
\le \dt \|\widehat{R}_{1u}^n\| \|\widehat{e}_u^{n+\frac{1}{2}}\|
\le \dt^2 \|\widehat{R}_{1u}^n\|^2 + \frac{1}{4}\|\widehat{e}_u^{n+\frac{1}{2}}\|^2.
\end{equation*}
Similar to the deductions of \eqref{sav1err_pf4a} and \eqref{sav1err_pf4b0},
applying Lemma \ref{lem_sav1_nonlinear} leads to
\begin{align*}
& \dt \<g(u^n,s^n)f(u^n) - g(u_e(t_n),s_e(t_n))f(u_e(t_n)), \widehat{e}_u^{n+\frac{1}{2}}\>
\le 2 C_{g} ^2\dt^2 (\|e_u^n\|^2+|e_s^n|^2) + \frac{1}{4}\|\widehat{e}_u^{n+\frac{1}{2}}\|^2, \\
& \kp\dt \<(g(u_e(t_n),s_e(t_n))-g(u^n,s^n)) (u_e(t_{n+\frac{1}{2}})-u_e(t_n)), \widehat{e}_u^{n+\frac{1}{2}}\>
\le C_1\kp^2\dt^2 (\|e_u^n\|^2+|e_s^n|^2) + \frac{1}{4} \|\widehat{e}_u^{n+\frac{1}{2}}\|^2.
\end{align*}
Then, we have
\begin{align*}
& \Big(\frac{1}{4}+\frac{\kp  g(u^n,s^n)}{2}\dt\Big) \|\widehat{e}_u^{n+\frac{1}{2}}\|^2
+ \Big(1+\frac{\kp  g(u^n,s^n)}{2}\dt\Big) \|\widehat{e}_u^{n+\frac{1}{2}} - e_u^n\|^2 \nn\\
& \qquad\quad \le \Big(1+\frac{\kp  g(u^n,s^n)}{2}\dt\Big) \|e_u^n\|^2
+ (2 C_{g} ^2 + C_1\kp^2) \dt^2 (\|e_u^n\|^2+|e_s^n|^2)
+ \dt^2 \|\widehat{R}_{1u}^n\|^2.
\end{align*}
By Remark \ref{rmk_sav2_gbound},
we then can simplify the above equation to get
\begin{align*}
\|\widehat{e}_u^{n+\frac{1}{2}}\|^2 + 4\|\widehat{e}_u^{n+\frac{1}{2}} - e_u^n\|^2
& \le (4+2G^*\kp\dt) \|e_u^n\|^2
+ (8 C_{g} ^2 + 4C_1\kp^2)\dt^2 (\|e_u^n\|^2+|e_s^n|^2)
+ 4\dt^2 \|\widehat{R}_{1u}^n\|^2.
\end{align*}
When $\dt\le1$, using \eqref{sav2trunerr1}, we obtain
\begin{align}
\label{sav2err_pf11a}
\|\widehat{e}_u^{n+\frac{1}{2}}\|^2 + 4\|\widehat{e}_u^{n+\frac{1}{2}} - e_u^n\|^2
& \le (4+8 C_{g} ^2+2G^*\kp+4C_1\kp^2) \|e_u^n\|^2 \nn \\
& \quad + (8 C_{g} ^2 + 4C_1\kp^2) |e_s^n|^2
+ 4C_e^2 \dt^2 (\dt+h^2)^2.
\end{align}

Multiplying \eqref{sav2err1b} by $2\widehat{e}_s^{n+\frac{1}{2}}$ yields
\begin{align*}
& |\widehat{e}_s^{n+\frac{1}{2}}|^2 - |e_s^n|^2 + |\widehat{e}_s^{n+\frac{1}{2}}-e_s^n|^2 \\
& \qquad\quad = 2\widehat{e}_s^{n+\frac{1}{2}}
\< g(u_e(t_n),s_e(t_n))f(u_e(t_n)) - g(u^n,s^n)f(u^n), u_e(t_{n+\frac{1}{2}})-u_e(t_n) \> \nn \\
& \qquad\qquad - 2\widehat{e}_s^{n+\frac{1}{2}} g(u^n,s^n) \<f(u^n),\widehat{e}_u^{n+\frac{1}{2}}-e_u^n\>
- \dt \widehat{R}_{1s}^n \widehat{e}_s^{n+\frac{1}{2}}.
\end{align*}
We can bound the third and second terms in the right-hand side of the above equation repectively as
\begin{eqnarray*}
& \displaystyle
- \dt \widehat{R}_{1s}^n \widehat{e}_s^{n+\frac{1}{2}}
\le \dt^2 |\widehat{R}_{1s}^n|^2 + \frac{1}{4} |\widehat{e}_s^{n+\frac{1}{2}}|^2, \\
& \displaystyle
- 2\widehat{e}_s^{n+\frac{1}{2}} g(u^n,s^n) \<f(u^n),\widehat{e}_u^{n+\frac{1}{2}}-e_u^n\>
\le \frac{1}{4} |\widehat{e}_s^{n+\frac{1}{2}}|^2
+ C_5 \|\widehat{e}_u^{n+\frac{1}{2}}-e_u^n\|^2,
\end{eqnarray*}
where $C_5>0$ depends on $C_*$, $|\Omega|$, $\uinit$, and $\|f\|_{C[-\beta,\beta]}$.
By estimating the first term in the similar way as \eqref{sav1err_pf7a},
we obtain
\begin{align*}
& |\widehat{e}_s^{n+\frac{1}{2}}|^2 - |e_s^n|^2 + |\widehat{e}_s^{n+\frac{1}{2}}-e_s^n|^2 \\
& \qquad\quad \le C_2 \dt (\|e_u^n\|^2 + |e_s^n|^2 + |\widehat{e}_s^{n+\frac{1}{2}}|^2)
+ C_5 \|\widehat{e}_u^{n+\frac{1}{2}}-e_u^n\|^2
+ \frac{1}{2} |\widehat{e}_s^{n+\frac{1}{2}}|^2 + \dt^2 |\widehat{R}_{1s}^n|^2,
\end{align*}
and thus,
\[
(1-2C_2\dt) |\widehat{e}_s^{n+\frac{1}{2}}|^2
\le 2|e_s^n|^2 + 2C_2 \dt (\|e_u^n\|^2 + |e_s^n|^2)
+ 2C_5 \|\widehat{e}_u^{n+\frac{1}{2}}-e_u^n\|^2 + 2\dt^2 |\widehat{R}_{1s}^n|^2.
\]
When $\dt\le \frac{1}{4C_2}$, using \eqref{sav2trunerr1}, we can get
\begin{equation}
\label{sav2err_pf11b}
|\widehat{e}_s^{n+\frac{1}{2}}|^2
\le \|e_u^n\|^2 + 5|e_s^n|^2 + 4C_5 \|\widehat{e}_u^{n+\frac{1}{2}}-e_u^n\|^2 + 4C_e^2 \dt^2 (\dt+h^2)^2.
\end{equation}
The sum of \eqref{sav2err_pf11a} multiplied by $C_5$ and \eqref{sav2err_pf11b} leads to \eqref{sav2err_pf11}.
\end{proof}

\begin{theorem}[Error estimate of sESAV2]
\label{thm_sav2_error}
Given a fixed time $T > 0$ and
suppose the exact solution $u_e$ is smooth enough on $[0,T]\times\overline{\Omega}$.
Assume that $\kp\ge \|f'\|_{C[-\beta,\beta]}$ and $\|\uinit\|_\infty\le\beta$.
If $\dt$ and $h$ are small sufficiently and satisfy \eqref{sav2_mbp_dt}, then we have the error estimate
for the sESAV2 scheme \eqref{eq_sav2stab} as follows:
\begin{equation*}
\|e_u^n\| + \|\nabla_h e_u^n\| + |e_s^n| \le C(\dt^2+h^2),\qquad 0 \le n \le \lfloor T/\dt\rfloor,
\end{equation*}
where the constant $C>0$ depends on $C_*$, $|\Omega|$, $T$, $u_e$, $\kp$, $\eps$, and $\|f\|_{C^1[-\beta,\beta]}$
but is independent of $\dt$ and $h$.
\end{theorem}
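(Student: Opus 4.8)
The plan is to follow the blueprint of Theorem~\ref{thm_sav1_error} for the first-order scheme, adapting it to the two features peculiar to the Crank--Nicolson structure: the nonlinear and stabilization terms are now evaluated at the \emph{predicted} midpoint $(\widehat{u}^{n+\frac{1}{2}},\widehat{s}^{n+\frac{1}{2}})$, and the diffusion is discretized by the trapezoidal rule. First I would subtract the consistency identities \eqref{sav2trun} from the scheme \eqref{eq_sav2stab} to obtain error equations for $\delta_t e_u^{n+1}$ and $\delta_t e_s^{n+1}$. The nonlinear difference $g(\widehat{u}^{n+\frac{1}{2}},\widehat{s}^{n+\frac{1}{2}})f(\widehat{u}^{n+\frac{1}{2}}) - g(u_e(t_{n+\frac{1}{2}}),s_e(t_{n+\frac{1}{2}}))f(u_e(t_{n+\frac{1}{2}}))$, together with the corresponding stabilization difference, is then controlled exactly as in Lemma~\ref{lem_sav1_nonlinear} but centred at $t_{n+\frac{1}{2}}$, yielding a bound proportional to $\|\widehat{e}_u^{n+\frac{1}{2}}\| + |\widehat{e}_s^{n+\frac{1}{2}}|$; here I would invoke Theorem~\ref{thm_sav2_mbp} to guarantee $\|\widehat{u}^{n+\frac{1}{2}}\|_\infty\le\beta$, and Remark~\ref{rmk_sav2_gbound} together with Corollary~\ref{cor_sav2_g_bound} to keep $g$ trapped in $[\widetilde{G}_*,G^*]$.

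Next I would take the discrete inner product of the $u$-error equation with $2\dt\,\delta_t e_u^{n+1}$. Because the Laplacian enters as $\tfrac{\eps^2}{2}\Delta_h(e_u^{n+1}+e_u^n)$, self-adjointness makes the cross terms cancel and produces the clean telescoping $-\eps^2\|\nabla_h e_u^{n+1}\|^2 + \eps^2\|\nabla_h e_u^n\|^2$, without the extra $\dt^2$ remainder that appeared in the first-order analysis. Using the identity \eqref{sav1err_pf0}, Young's inequality, and the bound on the nonlinear difference, I would absorb every factor of $\|\delta_t e_u^{n+1}\|$ into a single $\tfrac{\dt}{4}\|\delta_t e_u^{n+1}\|^2$ on the left, leaving a right-hand side of the form $\dt(\|\widehat{e}_u^{n+\frac{1}{2}}\|^2 + |\widehat{e}_s^{n+\frac{1}{2}}|^2 + \|e_u^n\|^2) + \dt\|R_{2u}^n\|^2$. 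Multiplying the $s$-error equation by $2\dt\,e_s^{n+1}$ and repeating the steps leading to \eqref{sav1err_pf8} gives an analogous inequality for $|e_s^{n+1}|^2 - |e_s^n|^2$.

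The crucial step, and the one that makes the scheme genuinely second order despite the first-order predictor, is to eliminate the half-step quantities through the preliminary estimate \eqref{sav2err_pf11}, which bounds $\|\widehat{e}_u^{n+\frac{1}{2}}\|^2 + |\widehat{e}_s^{n+\frac{1}{2}}|^2$ by $\widehat{C}(\|e_u^n\|^2 + |e_s^n|^2) + \widehat{C}C_e^2(\dt^2+\dt h^2)^2$. The accumulated part $\|e_u^n\|^2 + |e_s^n|^2$ is simply carried into the Gronwall loop, while the fresh truncation $(\dt^2+\dt h^2)^2 = \dt^2(\dt+h^2)^2$ satisfies $(\dt^2+\dt h^2)^2\le(\dt^2+h^2)^2$ for $\dt\le1$ and is therefore dominated by the genuine consistency error $\|R_{2u}^n\|^2+|R_{2s}^n|^2 = O((\dt^2+h^2)^2)$ from \eqref{sav2trunerr}. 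Thus the predictor's first-order content, always multiplied by an extra power of $\dt$, enters only at second order.

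Finally I would set $W^n := 2\widetilde{G}_*\kp\|e_u^n\|^2 + \eps^2\|\nabla_h e_u^n\|^2 + |e_s^n|^2$, in direct analogy with the first-order proof (the lower bound $\widetilde{G}_*$ from Corollary~\ref{cor_sav2_g_bound} now playing the role of $G_*$), and add the two inequalities to reach a recursion of the form $W^{n+1} - W^n \le \widetilde{C}\dt(W^n + W^{n+1}) + C\dt(\dt^2+h^2)^2$. Restricting $\dt$ so that $\widetilde{C}\dt\le\tfrac{1}{2}$ and applying the discrete Gronwall inequality then yields $W^n\le C\mathrm{e}^{CT}(\dt^2+h^2)^2$, which is the claimed bound. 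I expect the main obstacle to be bookkeeping rather than conceptual: one must route every occurrence of the predicted-midpoint error through \eqref{sav2err_pf11} and carefully track the stabilization difference $-\kp[g(\widehat{u}^{n+\frac{1}{2}},\widehat{s}^{n+\frac{1}{2}})(u^{n+\frac{1}{2}}-\widehat{u}^{n+\frac{1}{2}}) - \cdots]$, whose argument contributes both the full-step errors $\tfrac{1}{2}(e_u^{n+1}+e_u^n)$ and a half-step error $\widehat{e}_u^{n+\frac{1}{2}}$, so as to confirm that it never degrades the second-order rate.
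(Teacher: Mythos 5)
Your proposal follows essentially the same route as the paper's proof: form the error equations from \eqref{eq_sav2stab} and \eqref{sav2trun}, test with $2\dt\,\delta_te_u^{n+1}$ and $2\dt\,e_s^{n+1}$, bound the midpoint nonlinear and stabilization differences via the analogue of Lemma~\ref{lem_sav1_nonlinear} at $t_{n+\frac{1}{2}}$ using the MBP and the bounds $\widetilde{G}_*\le g\le G^*$, eliminate $\|\widehat{e}_u^{n+\frac{1}{2}}\|^2+|\widehat{e}_s^{n+\frac{1}{2}}|^2$ through the predictor estimate \eqref{sav2err_pf11}, and close with discrete Gronwall. You also correctly identify the key point that the first-order predictor error always carries an extra factor of $\dt$ and hence does not degrade the $O(\dt^2+h^2)$ rate, which is exactly the mechanism in the paper.
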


\begin{proof}
The difference between \eqref{eq_sav2stab} and \eqref{sav2trun} leads to
\begin{subequations}
\label{sav2err}
\begin{align}
&\delta_te_u^{n+1}
 = \eps^2 \Delta_h e_u^{n+\frac{1}{2}}
+ g(\widehat{u}^{n+\frac{1}{2}},\widehat{s}^{n+\frac{1}{2}}) f(\widehat{u}^{n+\frac{1}{2}})
- g(u_e(t_{n+\frac{1}{2}}),s_e(t_{n+\frac{1}{2}})) f(u_e(t_{n+\frac{1}{2}})) \nn \\
& \qquad\qquad
+ \kp \big(g(u_e(t_{n+\frac{1}{2}}),s_e(t_{n+\frac{1}{2}}))-g(\widehat{u}^{n+\frac{1}{2}},\widehat{s}^{n+\frac{1}{2}})\big)
\Big(\frac{u_e(t_{n+1})+u_e(t_n)}{2}-u_e(t_{n+\frac{1}{2}})\Big) \nn \\
& \qquad\qquad
- \kp g(\widehat{u}^{n+\frac{1}{2}},\widehat{s}^{n+\frac{1}{2}}) (e_u^{n+\frac{1}{2}}-\widehat{e}_u^{n+\frac{1}{2}})
- R_{2u}^n, \label{sav2erra} \\
&\delta_te_s^{n+1}
 = \Big\< g(u_e(t_{n+\frac{1}{2}}),s_e(t_{n+\frac{1}{2}})) f(u_e(t_{n+\frac{1}{2}}))
- g(\widehat{u}^{n+\frac{1}{2}}, \widehat{s}^{n+\frac{1}{2}}) f(\widehat{u}^{n+\frac{1}{2}}),
\frac{u_e(t_{n+1})-u_e(t_n)}{\dt} \Big\> \nn \\
& \qquad\qquad
+ \kp \big(g(\widehat{u}^{n+\frac{1}{2}},\widehat{s}^{n+\frac{1}{2}})-g(u_e(t_{n+\frac{1}{2}}),s_e(t_{n+\frac{1}{2}}))\big)
\Big\<\frac{u_e(t_{n+1})+u_e(t_n)}{2}-u_e(t_{n+\frac{1}{2}}),\frac{u_e(t_{n+1})-u_e(t_n)}{\dt}\Big\> \nn \\
& \qquad\qquad - g(\widehat{u}^{n+\frac{1}{2}},\widehat{s}^{n+\frac{1}{2}})
\<f(\widehat{u}^{n+\frac{1}{2}}),\delta_te_u^{n+1}\>
+ \kp g(\widehat{u}^{n+\frac{1}{2}},\widehat{s}^{n+\frac{1}{2}})
\<u^{n+\frac{1}{2}}-\widehat{u}^{n+\frac{1}{2}}, \delta_te_u^{n+1}\> \nn \\
& \qquad\qquad + \kp g(\widehat{u}^{n+\frac{1}{2}},\widehat{s}^{n+\frac{1}{2}})
\Big\<e_u^{n+\frac{1}{2}}-\widehat{e}_u^{n+\frac{1}{2}}, \frac{u_e(t_{n+1})-u_e(t_n)}{\dt} \Big\>
- R_{2s}^n. \label{sav2errb}
\end{align}
\end{subequations}

Taking the discrete inner product of \eqref{sav2erra} with $2\dt\delta_te_u^{n+1}$
and rearranging the term yield
\begin{align}
& \eps^2\|\nabla_h e_u^{n+1}\|^2 - \eps^2\|\nabla_h e_u^n\|^2 + 2\dt\|\delta_te_u^{n+1}\|^2 \nn \\
& \qquad
= 2\dt \<g(\widehat{u}^{n+\frac{1}{2}},\widehat{s}^{n+\frac{1}{2}}) f(\widehat{u}^{n+\frac{1}{2}})
- g(u_e(t_{n+\frac{1}{2}}),s_e(t_{n+\frac{1}{2}})) f(u_e(t_{n+\frac{1}{2}})),\delta_te_u^{n+1}\> \nn \\
& \qquad\quad
+ 2\kp\dt \big(g(u_e(t_{n+\frac{1}{2}}),s_e(t_{n+\frac{1}{2}}))-g(\widehat{u}^{n+\frac{1}{2}},\widehat{s}^{n+\frac{1}{2}})\big)
\Big\<\frac{u_e(t_{n+1})+u_e(t_n)}{2}-u_e(t_{n+\frac{1}{2}}),\delta_te_u^{n+1}\Big\> \nn \\
& \qquad\quad - 2\kp \dt  g(\widehat{u}^{n+\frac{1}{2}},\widehat{s}^{n+\frac{1}{2}}) \<e_u^{n+\frac{1}{2}}-\widehat{e}_u^{n+\frac{1}{2}},\delta_te_u^{n+1}\> - 2\dt\<R_{2u}^n,\delta_te_u^{n+1}\>. \label{sav2err_pf0}
\end{align}
Since $ g(\widehat{u}^{n+\frac{1}{2}},\widehat{s}^{n+\frac{1}{2}})\ge {\widetilde G}_{*}>0$, we get
\begin{align*}
& 2\kp \dt  g(\widehat{u}^{n+\frac{1}{2}},\widehat{s}^{n+\frac{1}{2}}) \<e_u^{n+\frac{1}{2}}-\widehat{e}_u^{n+\frac{1}{2}},\delta_te_u^{n+1}\>\nn\\
&\qquad = 2\kp \dt  g(\widehat{u}^{n+\frac{1}{2}},\widehat{s}^{n+\frac{1}{2}})
\Big\<\frac{e_u^{n+1}-e_u^n}{2}+e_u^n-\widehat{e}_u^{n+\frac{1}{2}},\delta_te_u^{n+1}\Big\> \\
& \qquad= \kp  g(\widehat{u}^{n+\frac{1}{2}},\widehat{s}^{n+\frac{1}{2}}) \|e_u^{n+1}-e_u^n\|^2
+ 2\kp \dt  g(\widehat{u}^{n+\frac{1}{2}},\widehat{s}^{n+\frac{1}{2}}) \<e_u^n-\widehat{e}_u^{n+\frac{1}{2}},\delta_te_u^{n+1}\> \\
&\qquad \ge \kp {\widetilde G}_{*} \|e_u^{n+1}-e_u^n\|^2
+ 2\kp \dt  g(\widehat{u}^{n+\frac{1}{2}},\widehat{s}^{n+\frac{1}{2}}) \<e_u^n-\widehat{e}_u^{n+\frac{1}{2}},\delta_te_u^{n+1}\> \\
&\qquad = \kp {\widetilde G}_{*} \|e_u^{n+1}\|^2 - \kp {\widetilde G}_{*} \|e_u^n\|^2 - 2\kp {\widetilde G}_{*}\dt \<e_u^n, \delta_te_u^{n+1}\> 
+ 2\kp \dt  g(\widehat{u}^{n+\frac{1}{2}},\widehat{s}^{n+\frac{1}{2}}) \<e_u^n-\widehat{e}_u^{n+\frac{1}{2}},\delta_te_u^{n+1}\>,
\end{align*}
where we have used \eqref{sav1err_pf0} in the last step.
Then, we obtain from \eqref{sav2err_pf0} that
\begin{align}
& {\widetilde G}_{*}\kp \|e_u^{n+1}\|^2 - {\widetilde G}_{*}\kp \|e_u^n\|^2
+ \eps^2\|\nabla_h e_u^{n+1}\|^2 - \eps^2\|\nabla_h e_u^n\|^2 + 2\dt \|\delta_te_u^{n+1}\|^2 \nn \\
& \qquad = 2\dt \<g(\widehat{u}^{n+\frac{1}{2}},\widehat{s}^{n+\frac{1}{2}}) f(\widehat{u}^{n+\frac{1}{2}})
- g(u_e(t_{n+\frac{1}{2}}),s_e(t_{n+\frac{1}{2}})) f(u_e(t_{n+\frac{1}{2}})), \delta_te_u^{n+1}\> \nn \\
& \qquad\quad + 2\kp\dt \big(g(u_e(t_{n+\frac{1}{2}}),s_e(t_{n+\frac{1}{2}}))-g(\widehat{u}^{n+\frac{1}{2}},\widehat{s}^{n+\frac{1}{2}})\big)
\Big\<\frac{u_e(t_{n+1})+u_e(t_n)}{2}-u_e(t_{n+\frac{1}{2}}),\delta_te_u^{n+1}\Big\> \nn \\
& \qquad\quad + 2{\widetilde G}_{*}\kp\dt \<e_u^n, \delta_te_u^{n+1}\>
+ 2\kp\dt  g(\widehat{u}^{n+\frac{1}{2}},\widehat{s}^{n+\frac{1}{2}}) \<\widehat{e}_u^{n+\frac{1}{2}}-e_u^n,\delta_te_u^{n+1}\>
- 2 \dt \<R_{2u}^n, \delta_te_u^{n+1}\>. \label{sav2err_pf1}
\end{align}
For the last three terms in the right-hand side of \eqref{sav2err_pf1}, we have respectively
\begin{align}
2\kp\dt  g(\widehat{u}^{n+\frac{1}{2}},\widehat{s}^{n+\frac{1}{2}}) \<\widehat{e}_u^{n+\frac{1}{2}}-e_u^n,\delta_te_u^{n+1}\>
& \le 2G^*\kp\dt (\|\widehat{e}_u^{n+\frac{1}{2}}\|+\|e_u^n\|) \|\delta_te_u^{n+1}\| \nn \\
& \le 6{G^*}^2\kp^2\dt (\|\widehat{e}_u^{n+\frac{1}{2}}\|^2+\|e_u^n\|^2) + \frac{\dt}{3}\|\delta_te_u^{n+1}\|^2,
\label{sav2err_pf2a} \\
2{\widetilde G}_{*}\kp\dt \<e_u^n, \delta_te_u^{n+1}\>
& \le 3{\widetilde G}_{*}^2\kp^2 \dt\|e_u^n\|^2 + \frac{\dt}{3} \|\delta_te_u^{n+1}\|^2, \label{sav2err_pf2b} \\
- 2\dt \<R_{2u}^n, \delta_te_u^{n+1}\>
& \le 3\dt \|R_{2u}^n\|^2 + \frac{\dt}{3}\|\delta_te_u^{n+1}\|^2. \label{sav2err_pf2c}
\end{align}
By the  energy dissipation and MBP of the sESAV1 substep \eqref{eq_sav2stab0},
we know that $\widehat{u}^{n+\frac{1}{2}}$ and $\widehat{s}^{n+\frac{1}{2}}$ are bounded uniformly.
By conducting the similar deductions to the proof of Lemma \ref{lem_sav1_nonlinear},
we can obtain
\begin{subequations}
\label{sav1_nonlinearhat}
\begin{align}
& |g(\widehat{u}^{n+\frac{1}{2}},\widehat{s}^{n+\frac{1}{2}})
- g(u_e(t_{n+\frac{1}{2}}),s_e(t_{n+\frac{1}{2}}))|
\le  C_{g} (\|\widehat{e}_u^{n+\frac{1}{2}}\|+|\widehat{e}_s^{n+\frac{1}{2}}|), \\
& \|g(\widehat{u}^{n+\frac{1}{2}},\widehat{s}^{n+\frac{1}{2}}) f(\widehat{u}^{n+\frac{1}{2}})
- g(u_e(t_{n+\frac{1}{2}}),s_e(t_{n+\frac{1}{2}})) f(u_e(t_{n+\frac{1}{2}}))\|
\le  C_{g} (\|\widehat{e}_u^{n+\frac{1}{2}}\|+|\widehat{e}_s^{n+\frac{1}{2}}|),
\end{align}
\end{subequations}
where $ C_{g} >0$ is the same constant defined in Lemma \ref{lem_sav1_nonlinear}.
Then, the first and second terms in the right-hand side of \eqref{sav2err_pf1} can be bounded respectively as
\begin{align}
& 2\dt \<g(\widehat{u}^{n+\frac{1}{2}},\widehat{s}^{n+\frac{1}{2}}) f(\widehat{u}^{n+\frac{1}{2}})
- g(u_e(t_{n+\frac{1}{2}}),s_e(t_{n+\frac{1}{2}})) f(u_e(t_{n+\frac{1}{2}})), \delta_te_u^{n+1}\> \nn \\
& \qquad \le 2 C_{g} \dt (\|\widehat{e}_u^{n+\frac{1}{2}}\|+|\widehat{e}_s^{n+\frac{1}{2}}|) \|\delta_te_u^{n+1}\|\nn\\
&\qquad\le 6 C_{g} ^2 \dt (\|\widehat{e}_u^{n+\frac{1}{2}}\|^2 + |\widehat{e}_s^{n+\frac{1}{2}}|^2)
+ \frac{\dt}{3} \|\delta_te_u^{n+1}\|^2, \label{sav2err_pf2d}
\end{align}
and
\begin{align}
& 2\kp\dt \big(g(u_e(t_{n+\frac{1}{2}}),s_e(t_{n+\frac{1}{2}}))-g(\widehat{u}^{n+\frac{1}{2}},\widehat{s}^{n+\frac{1}{2}})\big)
\Big\<\frac{u_e(t_{n+1})+u_e(t_n)}{2}-u_e(t_{n+\frac{1}{2}}),\delta_te_u^{n+1}\Big\> \nn \\
& \qquad \le  C_{g} \kp\dt (\|\widehat{e}_u^{n+\frac{1}{2}}\|+|\widehat{e}_s^{n+\frac{1}{2}}|)
(\|u_e(t_{n+1})\|+\|u_e(t_n)\|+2\|u_e(t_{n+\frac{1}{2}})\|) \|\delta_te_u^{n+1}\| \nn \\
& \qquad \le C_1 \kp^2 \dt(\|\widehat{e}_u^{n+\frac{1}{2}}\|^2 + |\widehat{e}_s^{n+\frac{1}{2}}|^2)
+ \frac{\dt}{3} \|\delta_te_u^{n+1}\|^2, \label{sav2err_pf2e}
\end{align}
where $C_1>0$ has the same dependence as the constant $C_1$ used in \eqref{sav1err_pf4b0} but may have a different value.
Substituting \eqref{sav2err_pf2a}--\eqref{sav2err_pf2e} into \eqref{sav2err_pf1} leads to
\begin{align}
& {\widetilde G}_{*}\kp \|e_u^{n+1}\|^2 - {\widetilde G}_{*}\kp \|e_u^n\|^2
+ \eps^2\|\nabla_h e_u^{n+1}\|^2 - \eps^2\|\nabla_h e_u^n\|^2 + \frac{\dt}{3} \|\delta_te_u^{n+1}\|^2 \nn \\
& \qquad \le
(6{G^*}^2\kp^2 + 6 C_{g} ^2 + C_1\kp^2) \dt \|\widehat{e}_u^{n+\frac{1}{2}}\|^2
+ (6 C_{g} ^2 + C_1\kp^2) \dt  |\widehat{e}_s^{n+\frac{1}{2}}|^2 \nn \\
& \qquad\quad + (3{\widetilde G}_{*}^2+6{G^*}^2)\kp^2\dt \|e_u^n\|^2 + 3\dt \|R_{2u}^n\|^2. \label{sav2err_pf3}
\end{align}

Multiplying \eqref{sav2errb} by $2\dt e_s^{n+1}$ yields
\begin{align}
& |e_s^{n+1}|^2 - |e_s^n|^2 + |e_s^{n+1}-e_s^n|^2 \nn \\
& \quad = 2e_s^{n+1} \< g(u_e(t_{n+\frac{1}{2}}),s_e(t_{n+\frac{1}{2}})) f(u_e(t_{n+\frac{1}{2}}))
- g(\widehat{u}^{n+\frac{1}{2}},\widehat{s}^{n+\frac{1}{2}}) f(\widehat{u}^{n+\frac{1}{2}}),
u_e(t_{n+1})-u_e(t_n) \> \nn \\
& \quad\quad + \kp e_s^{n+1}
\big(g(\widehat{u}^{n+\frac{1}{2}},\widehat{s}^{n+\frac{1}{2}})-g(u_e(t_{n+\frac{1}{2}}),s_e(t_{n+\frac{1}{2}}))\big)
\<u_e(t_{n+1})+u_e(t_n)-2u_e(t_{n+\frac{1}{2}}),u_e(t_{n+1})-u_e(t_n)\> \nn \\
& \quad\quad - 2\dt e_s^{n+1} g(\widehat{u}^{n+\frac{1}{2}},\widehat{s}^{n+\frac{1}{2}})
\<f(\widehat{u}^{n+\frac{1}{2}}),\delta_te_u^{n+1}\>
+ 2\kp\dt e_s^{n+1} g(\widehat{u}^{n+\frac{1}{2}},\widehat{s}^{n+\frac{1}{2}})
\<u^{n+\frac{1}{2}}-\widehat{u}^{n+\frac{1}{2}}, \delta_te_u^{n+1}\> \nn \\
& \quad\quad + 2\kp e_s^{n+1} g(\widehat{u}^{n+\frac{1}{2}},\widehat{s}^{n+\frac{1}{2}})
\<e_u^{n+\frac{1}{2}}-\widehat{e}_u^{n+\frac{1}{2}}, u_e(t_{n+1})-u_e(t_n)\>
- 2\dt R_{2s}^n e_s^{n+1}. \label{sav2err_pf4}
\end{align}
The last term in the right-hand side of \eqref{sav2err_pf4} can be estimated by
\begin{equation}
\label{sav2err_pf5a}
- 2\dt R_{2s}^n e_s^{n+1} \le \dt |e_s^{n+1}|^2 + \dt |R_{2s}^n|^2.
\end{equation}
By the boundedness of $u^{n+\frac{1}{2}}$, $\widehat{u}^{n+\frac{1}{2}}$, and $\widehat{s}^{n+\frac{1}{2}}$,
the sum of the third and fourth terms in the right-hand side of \eqref{sav2err_pf4}
can be estimated similarly to \eqref{sav1err_pf7b} as follows:
\begin{align}
& - 2\dt e_s^{n+1} g(\widehat{u}^{n+\frac{1}{2}},\widehat{s}^{n+\frac{1}{2}})
\<f(\widehat{u}^{n+\frac{1}{2}}),\delta_te_u^{n+1}\>
+ 2\kp\dt e_s^{n+1} g(\widehat{u}^{n+\frac{1}{2}},\widehat{s}^{n+\frac{1}{2}})
\<u^{n+\frac{1}{2}}-\widehat{u}^{n+\frac{1}{2}}, \delta_te_u^{n+1}\> \nn \\
& \qquad\le 2G^*\kp \dt (\|u^{n+\frac{1}{2}}\|+\|\widehat{u}^{n+\frac{1}{2}}\|) |e_s^{n+1}| \|\delta_te_u^{n+1}\|
+ 2G^*\dt \|f(\widehat{u}^{n+\frac{1}{2}})\| |e_s^{n+1}| \|\delta_te_u^{n+1}\| \nn \\
& \qquad\le C_6 \dt |e_s^{n+1}|^2 + \frac{\dt}{3} \|\delta_te_u^{n+1}\|^2, 
\end{align}
where $C_6>0$ depends on $C_*$, $|\Omega|$, $\uinit$, $\kp$, and $\|f\|_{C[-\beta,\beta]}$.
Then, using the facts that $\|u_e(t_{n+1})-u_e(t_n)\|\le C\dt$,
$\|u_e(t_{n+1})+u_e(t_n)-2u_e(t_{n+\frac{1}{2}})\|\le C\tau^2$ (where $C>0$ is a constant due to smoothness of $u_e$), and the inequalities \eqref{sav1_nonlinearhat},
in the similar spirit of deriving \eqref{sav1err_pf7a},
the sum of the first, second and fifth terms in the right-hand side of \eqref{sav2err_pf4} can be bounded above by
\begin{equation}
\label{sav2err_pf5c}
\dt \big(\|\widehat{e}_u^{n+\frac{1}{2}}\|^2 + |\widehat{e}_s^{n+\frac{1}{2}}|^2
+ \|e_u^n\|^2 + \|e_u^{n+1}\|^2 + |e_s^{n+1}|^2\big)
\end{equation}
multiplied with a positive constant depending on $C_*$, $|\Omega|$, $u_e$, $\kp$, and $\|f\|_{C^1[-\beta,\beta]}$.
Combining \eqref{sav2err_pf4} with \eqref{sav2err_pf5a}--\eqref{sav2err_pf5c}, we obtain
\begin{align}
|e_s^{n+1}|^2 - |e_s^n|^2
& \le C_7 \dt \big(\|\widehat{e}_u^{n+\frac{1}{2}}\|^2 + |\widehat{e}_s^{n+\frac{1}{2}}|^2
+ \|e_u^n\|^2 + \|e_u^{n+1}\|^2 + |e_s^{n+1}|^2\big) \nn\\
& \quad
+ \frac{\dt}{3} \|\delta_te_u^{n+1}\|^2 + \dt |R_{2s}^n|^2 \label{sav2err_pf6}
\end{align}
with $C_7$ depending on $C_*$, $|\Omega|$, $u_e$, $\kp$, and $\|f\|_{C^1[-\beta,\beta]}$.

Adding \eqref{sav2err_pf3} and \eqref{sav2err_pf6}, we obtain
\begin{align}
& {\widetilde G}_{*}\kp (\|e_u^{n+1}\|^2 - \|e_u^n\|^2)
+ \eps^2 (\|\nabla_h e_u^{n+1}\|^2 - \|\nabla_h e_u^n\|^2)
+ (|e_s^{n+1}|^2 - |e_s^n|^2) \nn \\
& \qquad \le C_8 \dt \big(\|\widehat{e}_u^{n+\frac{1}{2}}\|^2 + |\widehat{e}_s^{n+\frac{1}{2}}|^2
+ \|e_u^n\|^2 + \|e_u^{n+1}\|^2 + |e_s^{n+1}|^2 \big)
+ 3\dt \|R_{2u}^n\|^2 + \dt |R_{2s}^n|^2, \label{sav2err_pf7}
\end{align}
where $C_8>0$ depends on $C_*$, $|\Omega|$, $u_e$, $\kp$, and $\|f\|_{C^1[-\beta,\beta]}$.
Substituting \eqref{sav2err_pf11} into \eqref{sav2err_pf7} and using the estimate \eqref{sav2trunerr},
when $\dt\le1$, we have
\begin{align*}
& {\widetilde G}_{*}\kp (\|e_u^{n+1}\|^2 - \|e_u^n\|^2)
+ \eps^2 (\|\nabla_h e_u^{n+1}\|^2 - \|\nabla_h e_u^n\|^2)
+ (|e_s^{n+1}|^2 - |e_s^n|^2) \nn \\
& \qquad \le C_8(\widehat{C}+1) \dt (\|e_u^n\|^2 + \|e_u^{n+1}\|^2 + |e_s^{n+1}|^2)
+ (C_8 \widehat{C}+4) C_e^2 \dt(\dt^2+h^2)^2.
\end{align*}
When $\dt$ is small sufficiently,
similar to the last paragraph in the proof of Theorem \ref{thm_sav1_error},
applying the discrete Gronwall's inequality yields
\[
{\widetilde G}_{*}\kp \|e_u^n\|^2 + \eps^2 \|\nabla_h e_u^n\|^2 + |e_s^n|^2 \le C(\dt^2+h^2)^2,
\]
which completes the proof.
\end{proof}

%

\section{Numerical experiments}
\label{sect_experiment}

This section is devoted to numerical tests and comparisons between
the proposed sESAV schemes and existing SAV schemes listed in Sections \ref{sect_classicSAV} and \ref{sect_ESAV}.
We consider the Allen--Cahn equation \eqref{AllenCahn} in two-dimensional spatial domain $\Omega=(0,1)\times(0,1)$
equipped with periodic boundary conditions,
so that the schemes can be solved efficiently by the fast Fourier transform.
We take two types of commonly-used nonlinear functions $f(u)$.
One is given by
\begin{equation}
\label{f_dw}
f(u)=-F'(u)=u-u^3
\end{equation}
with $F$ being the {\em double-well potential} $$F(u)=\frac{1}{4}(u^2-1)^2.$$
In this case, one has $\beta=1$ and $\|f'\|_{C[-1,1]}=2$.
The constant $C_0$ in \eqref{sav_e2} is $C_0=\frac{1}{4}(\kp^2+2\kp)$.
The other one is determined by the {\em Flory--Huggins potential}
\[
F(u) = \frac{\theta}{2}[(1+u)\ln(1+u) + (1-u)\ln(1-u)] - \frac{\theta_c}{2}u^2,
\]
which gives
\begin{equation}
\label{f_fh}
f(u) = -F'(u) = \frac{\theta}{2}\ln\frac{1-u}{1+u} + \theta_c u,
\end{equation}
where $\theta_c>\theta>0$.
In the following experiments,
we set $\theta=0.8$ and $\theta_c=1.6$, then the positive root of $f(\rho)=0$ gives us  $\beta\approx 0.9575$,
and $\|f'\|_{C[-\beta,\beta]}\approx8.02$.
The constant $C_0$ in \eqref{sav_e2} is then determined by $C_0=-F(\alpha)+\frac{\kp}{2}\alpha^2$,
where $\alpha>0$ solves $f(\alpha)+\kp\alpha=0$.

\subsection{Convergence in time}

We first verify the convergence order in time for the proposed sESAV schemes. Let us set $\eps=0.01$ in \eqref{AllenCahn} and take a smooth initial value
\[
\uinit(x,y) = 0.1 \sin(2\pi x) \sin(2\pi y).
\]
The temporal convergence tests  are conducted
by fixing the spatial mesh size $h=1/512$. As requested by the stabilizing condition $\kp\ge \|f'\|_{C[-\beta,\beta]}$,
we set $\kp=2$ for the double-well potential case (i.e., $f(u)$ given by \eqref{f_dw})
and $\kp=8.02$ for the Flory--Huggins potential case (i.e., $f(u)$ given by \eqref{f_fh}).
We compute the numerical solutions at $t=2$ using the sESAV1 and sESAV2 schemes
with various time step sizes $\dt=2^{-k}$, $k=4,5,\dots,12$.
To compute the numerical errors,
we treat the sESAV2 solution obtained by $\dt=0.1\times2^{-12}$ as the benchmark solution.
Figure \ref{fig_conv} shows the relation between the $L^2$-norm error and the time step size,
where the left picture corresponds to the double-well potential case
and the right one for the Flory--Huggins potential case.
The first-order temporal accuracy for sESAV1 and the second-order for sESAV2  are  observed for both cases as expected.

\begin{figure}[!ht]
\centering
\includegraphics[width=0.45\textwidth]{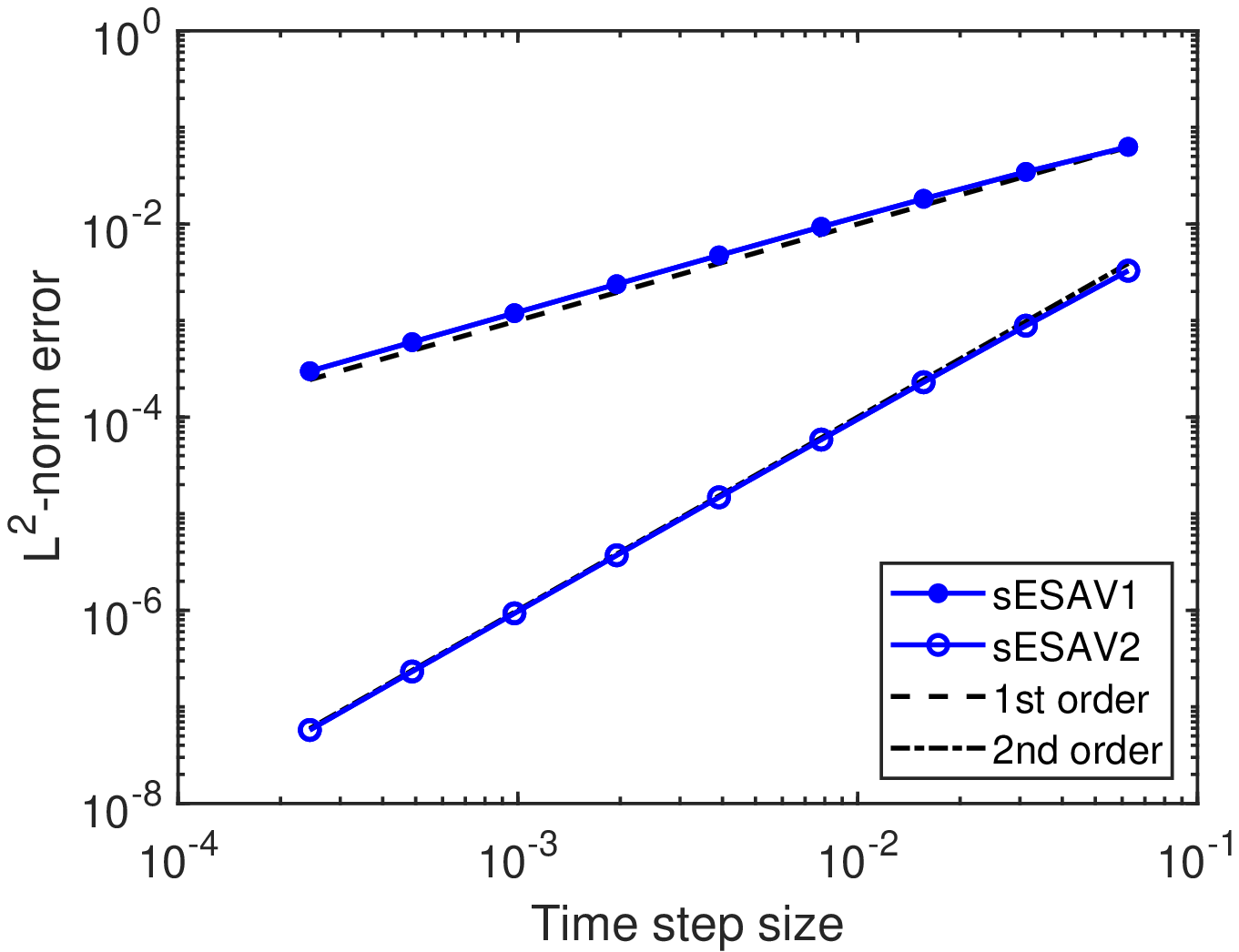}
\includegraphics[width=0.45\textwidth]{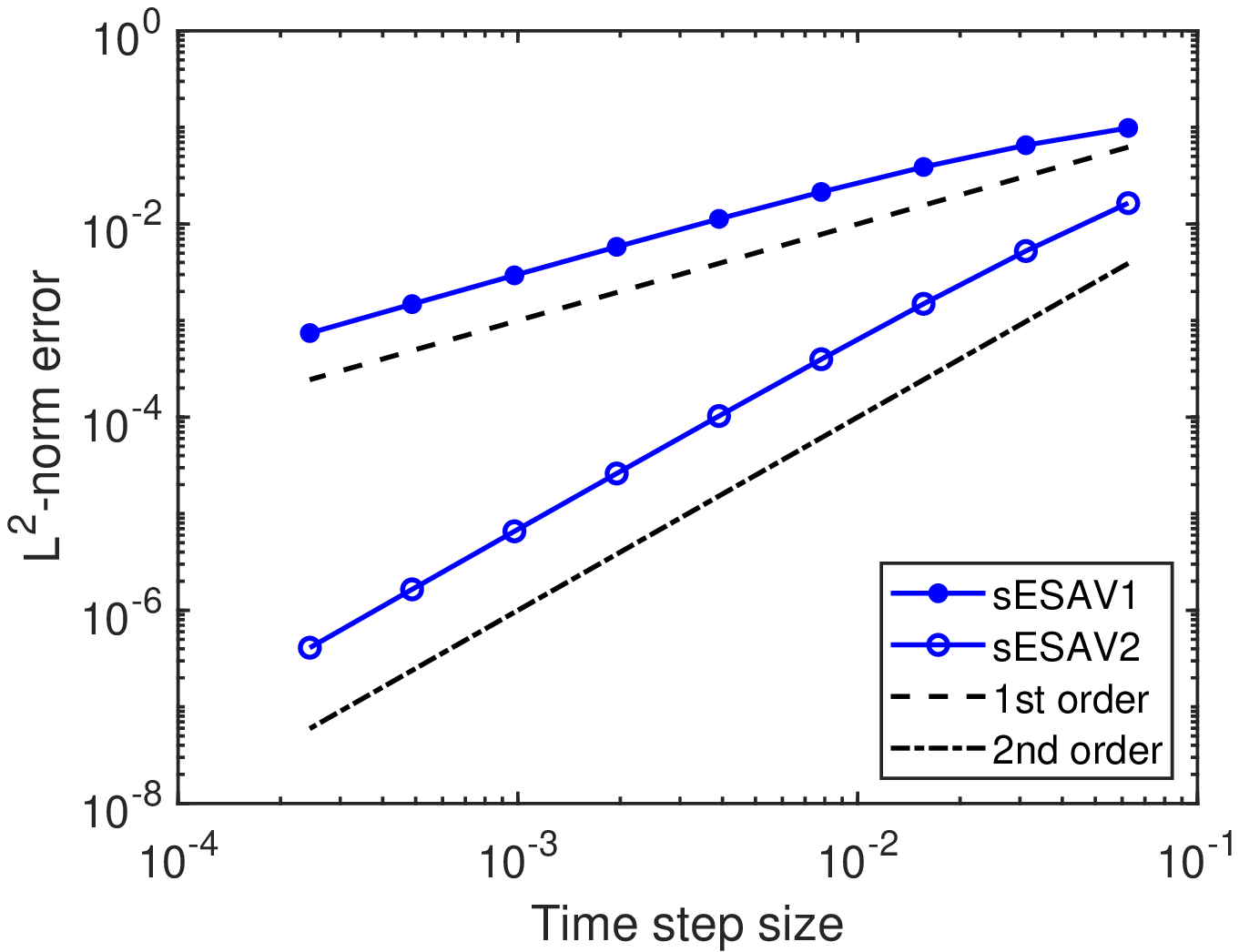}
\caption{The $L^2$-norm errors vs. the time step size produced by the proposed sESAV1 and sESAV2 schemes for the double-well potential case \eqref{f_dw} (left) and the Flory--Huggins potential case \eqref{f_fh} (right).}
\label{fig_conv}
\end{figure}

\subsection{Comparisons with existing SAV schemes}

In the following numerical experiments,
we compare the proposed sESAV schemes with classic SAV and ESAV schemes
by focusing on the MBP and energy dissipation law.
While  various modified energies are introduced as approximations of the original energy in discrete settings in order to facilitate the proof of energy dissipation law, the original one possesses the most accurate physical meaning for the model problem.
Therefore, we are concerned about the behavior of the original (discrete) energy $E_h(u)$ defined in \eqref{egydis} for
reflecting the phase transition process.
The dynamic process considered usually needs a long-time evolution to reach the steady state; here we conduct simulations in a short time interval for the comparison among these schemes.

Let us still consider the problem \eqref{AllenCahn} with $\eps=0.01$.
We adopt the uniform spatial mesh with $h=1/512$ and give the initial value by random numbers between $-0.8$ and $0.8$ on each mesh point. We then set the time step size $\dt=0.01$, and compute the numerical solutions by using the sESAV schemes, the classic SAV schemes (SAV1 and SAV2), and the ESAV schemes (ESAV1 and ESAV2).
Note that we set $\delta=C_0+0.01$
for the classic SAV schemes \eqref{eq_sav1shen} and \eqref{eq_sav2shen}.
For all comparison experiments, we will consider two settings for the stabilizing parameter:  $\kp=\|f'\|_{C[-\beta,\beta]}$ and $\kp=\frac{1}{2}\|f'\|_{C[-\beta,\beta]}$,
where the former one satisfies the requirement for the MBP preservation for the sESAV schemes
and the latter one was adopted in \cite{ShXuYa19} for the classic SAV schemes.
In addition, we take the numerical results obtained by the IFRK4 scheme \cite{JuLiQiYa21}
with the small time step size $10^{-4}$ as the benchmark solution.

First, we test the double-well  potential case \eqref{f_dw},
and correspondingly, set the stabilizing parameter $\kp=1$ and $\kp=2$ respectively to carry out the experiments.
Figure \ref{fig_comp_poly1} shows the evolutions of the supremum norms and the energies of simulated solutions
computed by the sESAV1, SAV1, and ESAV1 schemes.
For either $\kp=1$ or $\kp=2$, the sESAV1 scheme preserves the MBP,
while the supremum norms of the SAV1 and ESAV1 solutions obviously evolve beyond $1$,
which means that the MBP is violated.
The energy dissipation are observed for these three schemes,
where the sESAV1 scheme provides the most accurate result.
In addition, the larger $\kp$ leads to larger errors in the results,
especially for the ESAV1 scheme.
Figure \ref{fig_comp_poly2} plots corresponding results computed by the second-order schemes.
Again, only the sESAV2 scheme preserves the MBP and the energy dissipation perfectly.
The SAV2 and ESAV2 solutions evolve beyond $1$
but closer to $1$ than their first-order results due to the higher-order temporal accuracy.

\begin{figure}[!ht]
\centerline{
\includegraphics[width=0.43\textwidth]{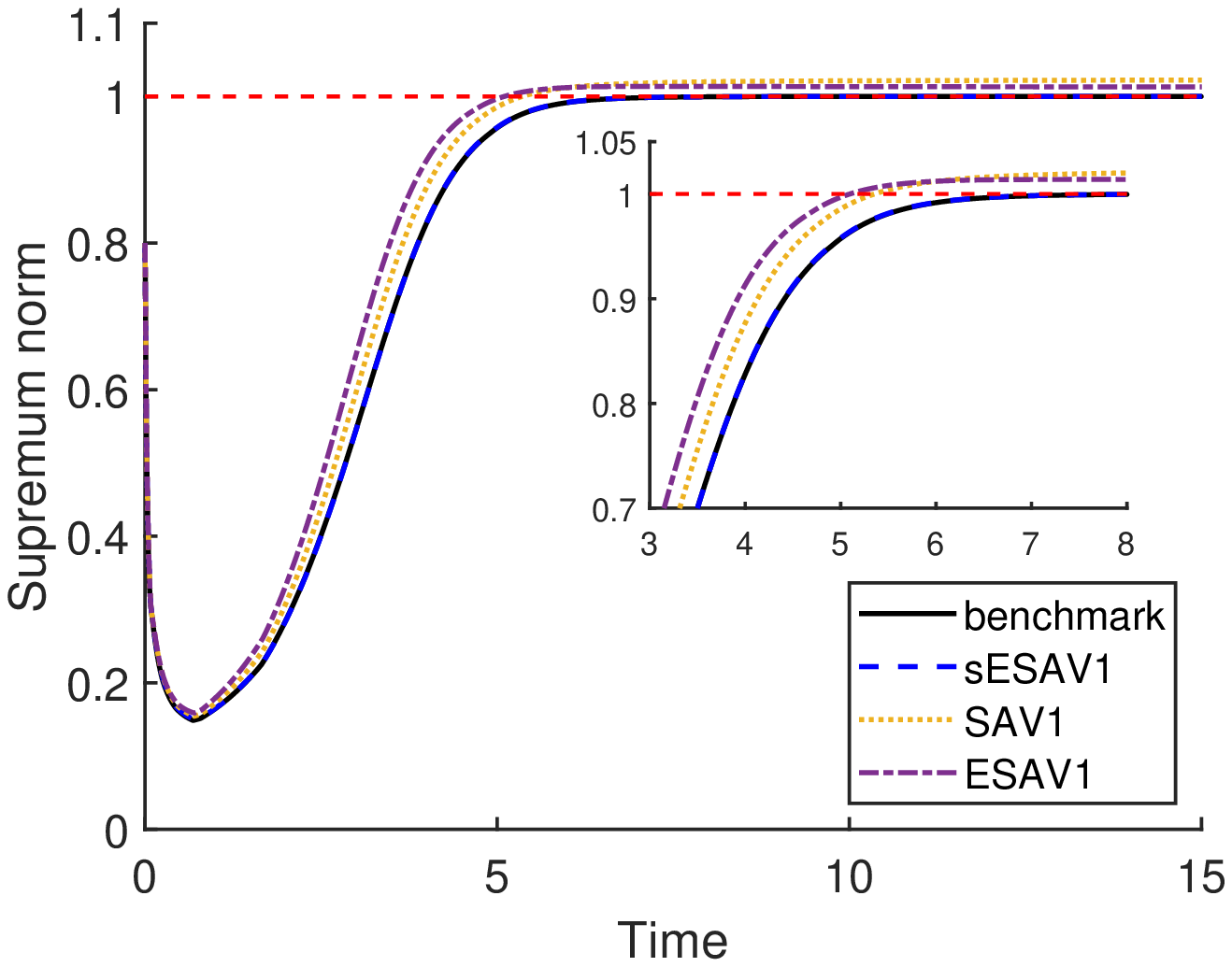}
\includegraphics[width=0.43\textwidth]{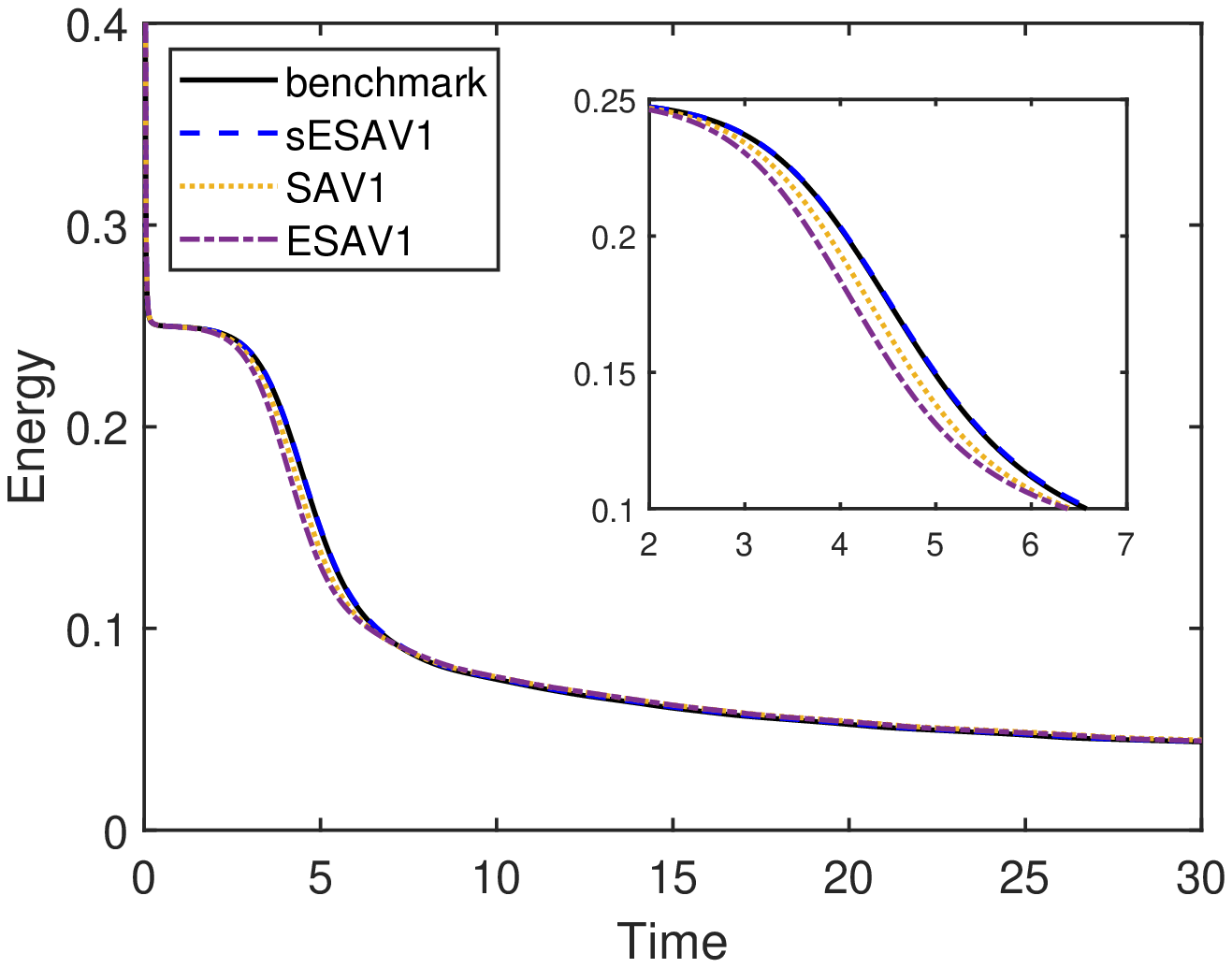}}
\centerline{
\includegraphics[width=0.43\textwidth]{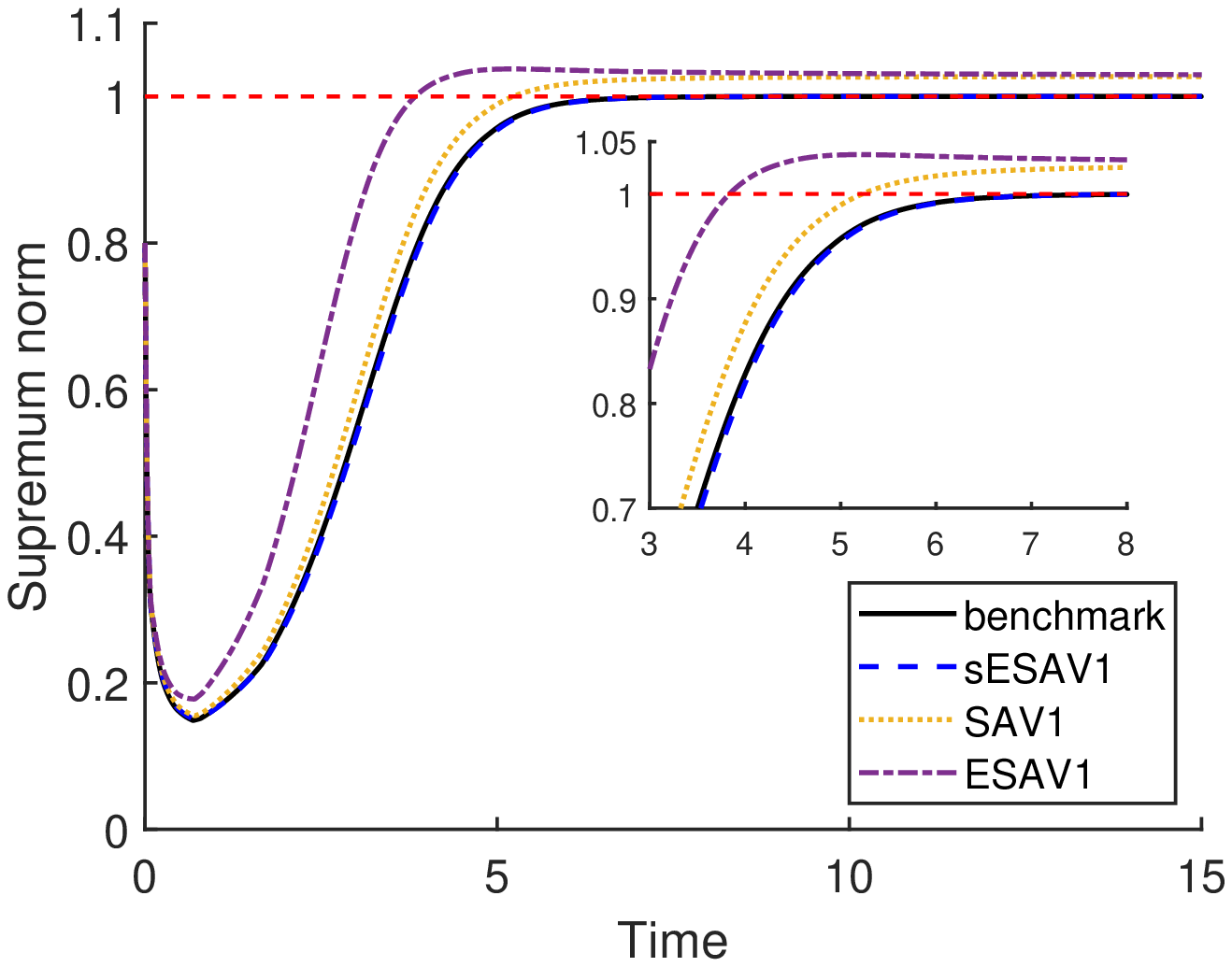}
\includegraphics[width=0.43\textwidth]{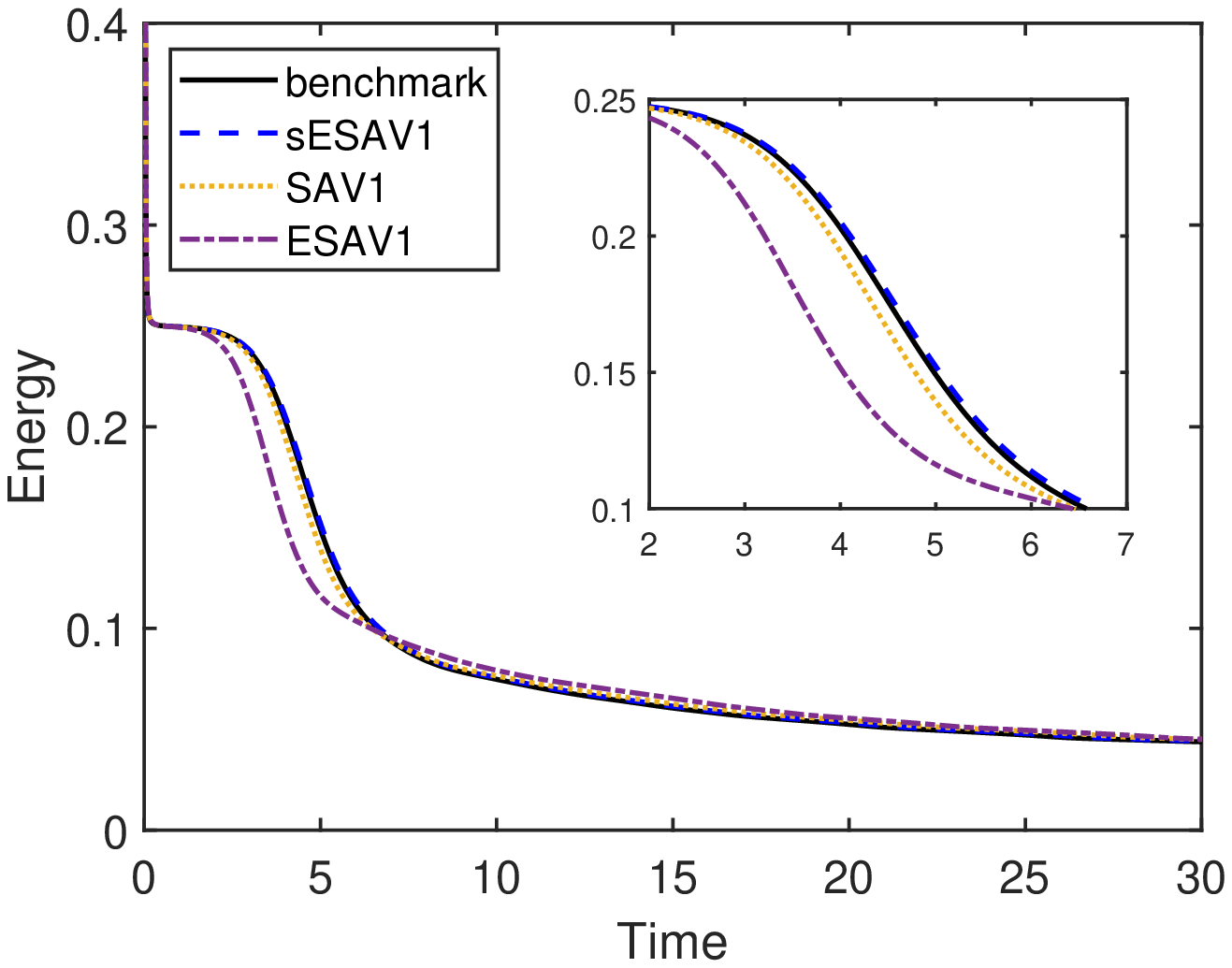}}
\caption{Evolutions of the supremum norms and the energies of  simulated solutions computed by the sESAV1, SAV1, and ESAV1 schemes
with  $\dt=0.01$ and  $\kp=1$ (top row) or $\kp=2$ (bottom row) for the double-well potential case.}
\label{fig_comp_poly1}
\end{figure}

\begin{figure}[!ht]
\centerline{
\includegraphics[width=0.43\textwidth]{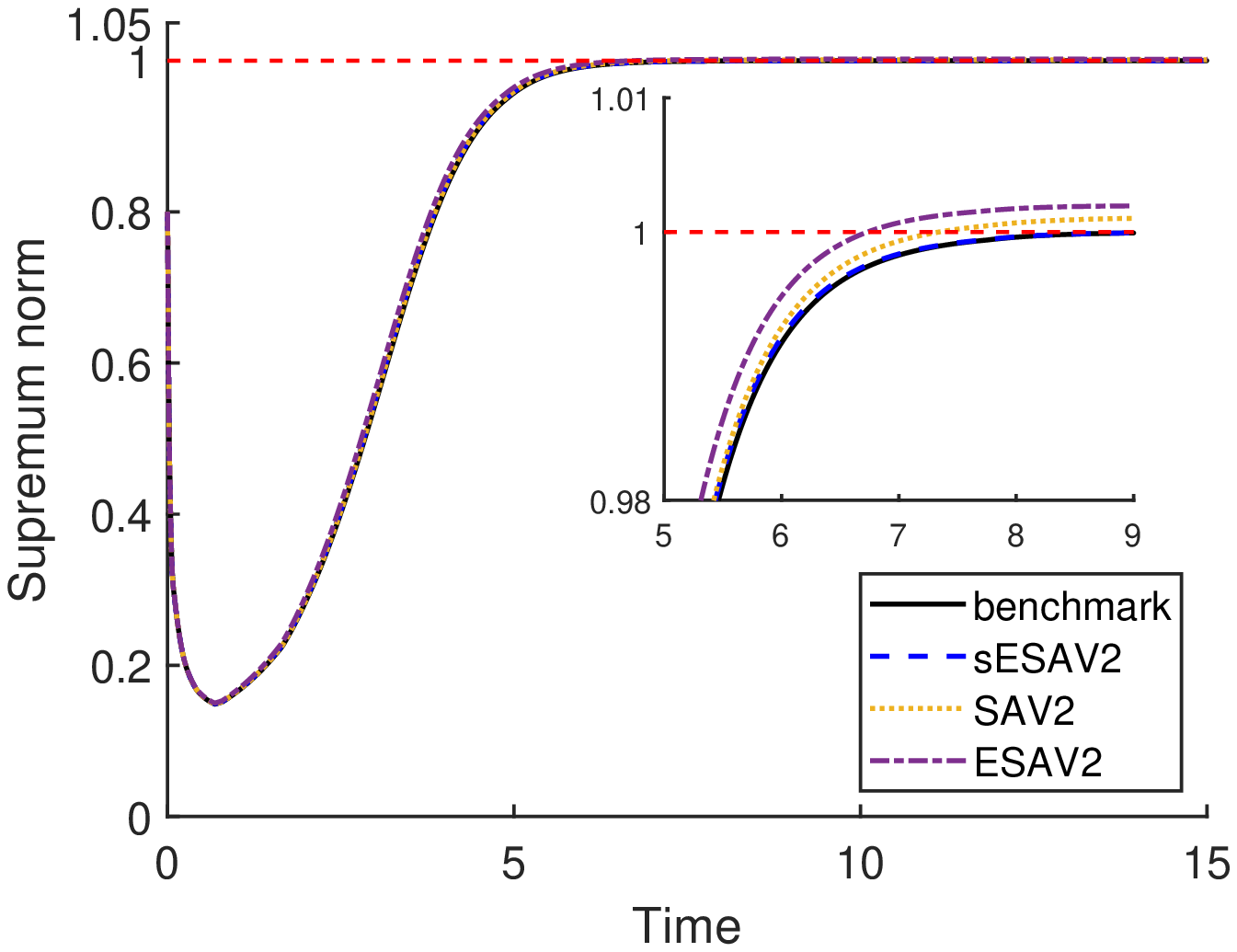}
\includegraphics[width=0.43\textwidth]{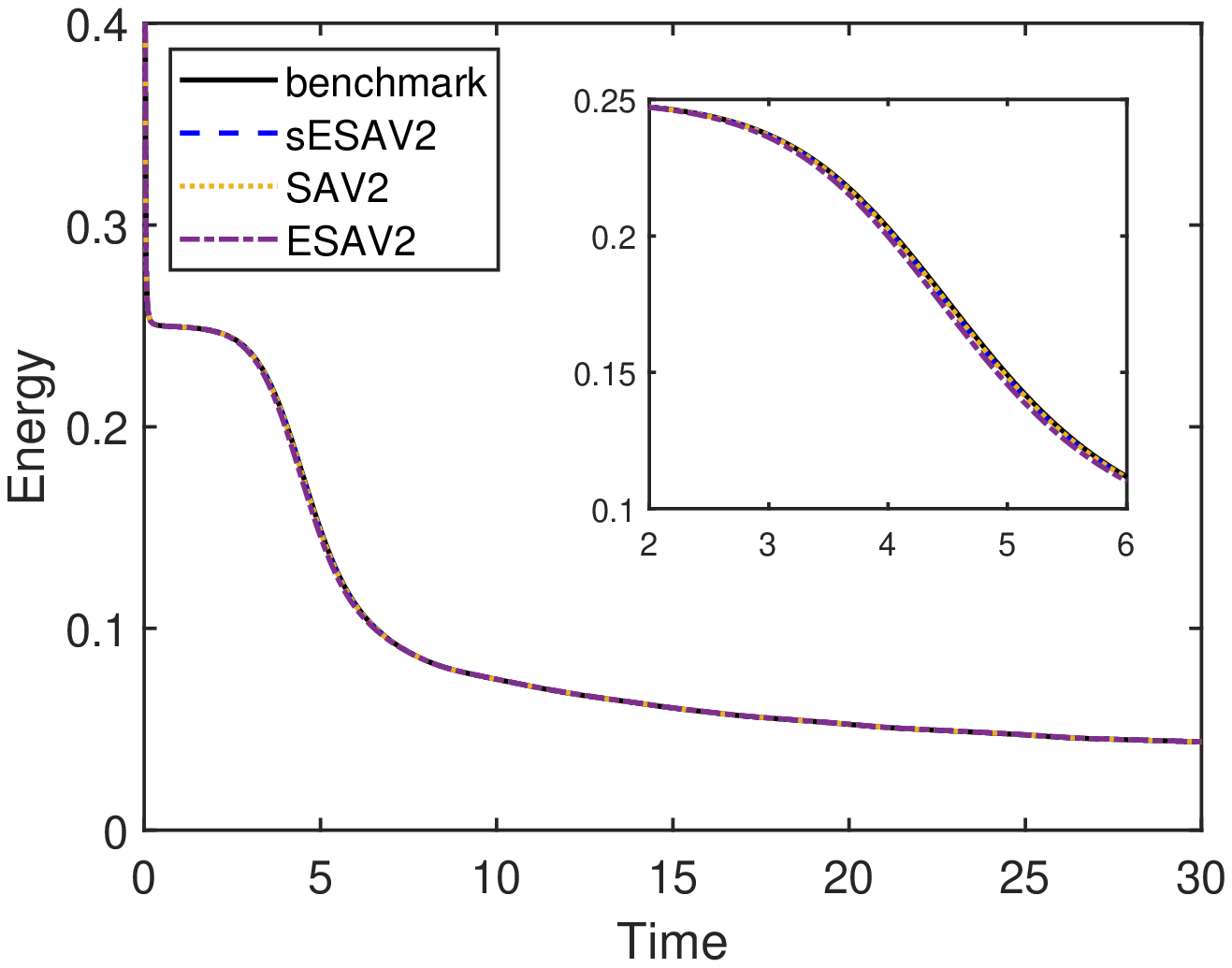}}
\centerline{
\includegraphics[width=0.43\textwidth]{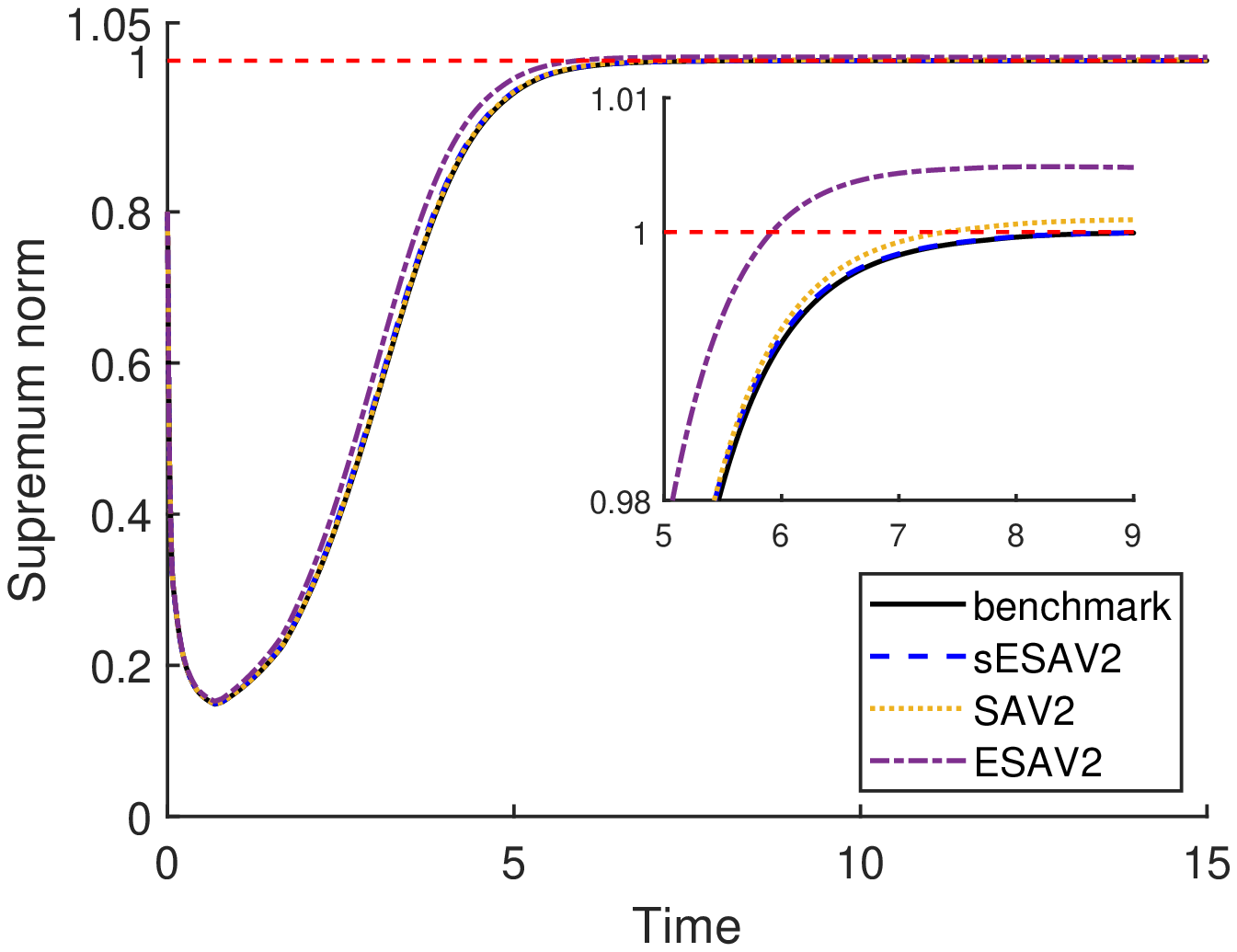}
\includegraphics[width=0.43\textwidth]{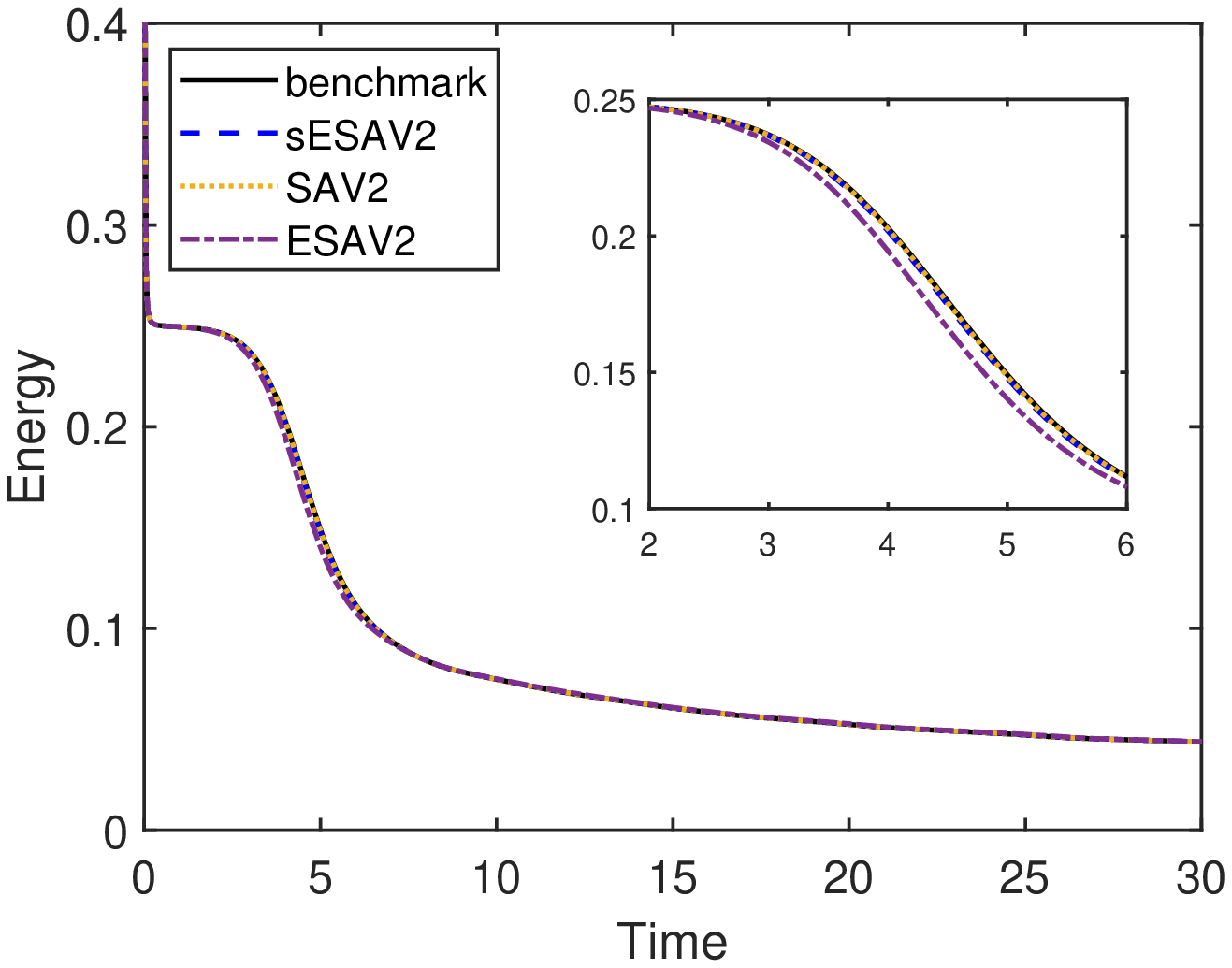}}
\caption{Evolutions of the supremum norms and the energies of  simulated solutions computed by the sESAV2, SAV2, and ESAV2 schemes with  $\dt=0.01$ and  $\kp=1$ (top row) or $\kp=2$ (bottom row) for the double-well potential case.}
\label{fig_comp_poly2}
\end{figure}

Next, we test the Flory--Huggins potential case  \eqref{f_fh} and correspondingly set $\kp=4.01$ and $\kp=8.02$ respectively.
Figures \ref{fig_comp_log1} and \ref{fig_comp_log2}
present the evolutions of the supremum norms and the energies of simulated solutions
obtained by the first- and second-order schemes, respectively.
Similar to the double-well potential case,
only the sESAV schemes preserve the MBP and the energy dissipation law as expected.
The SAV1, ESAV1, and ESAV2 schemes,
having the supremum norms beyond the theoretical bound $0.9575$, lead to inaccurate dynamic processes.
Especially, the ESAV1 solution with $\kp=8.02$ evolves beyond $1$,
which yields complex numbers due to the existence of the logarithmic term and gives the completely wrong dynamics.
For the SAV2 solutions, the dynamic processes look moderately correct according to the energy evolutions. Moreover,
it is interesting that the supremum norm goes larger than the desired bound for $\kp=8.02$
while it does not exceed for $\kp=4.01$, but both results are still a bit away from the expected value $0.9575$.

\begin{figure}[!ht]
\centerline{
\includegraphics[width=0.43\textwidth]{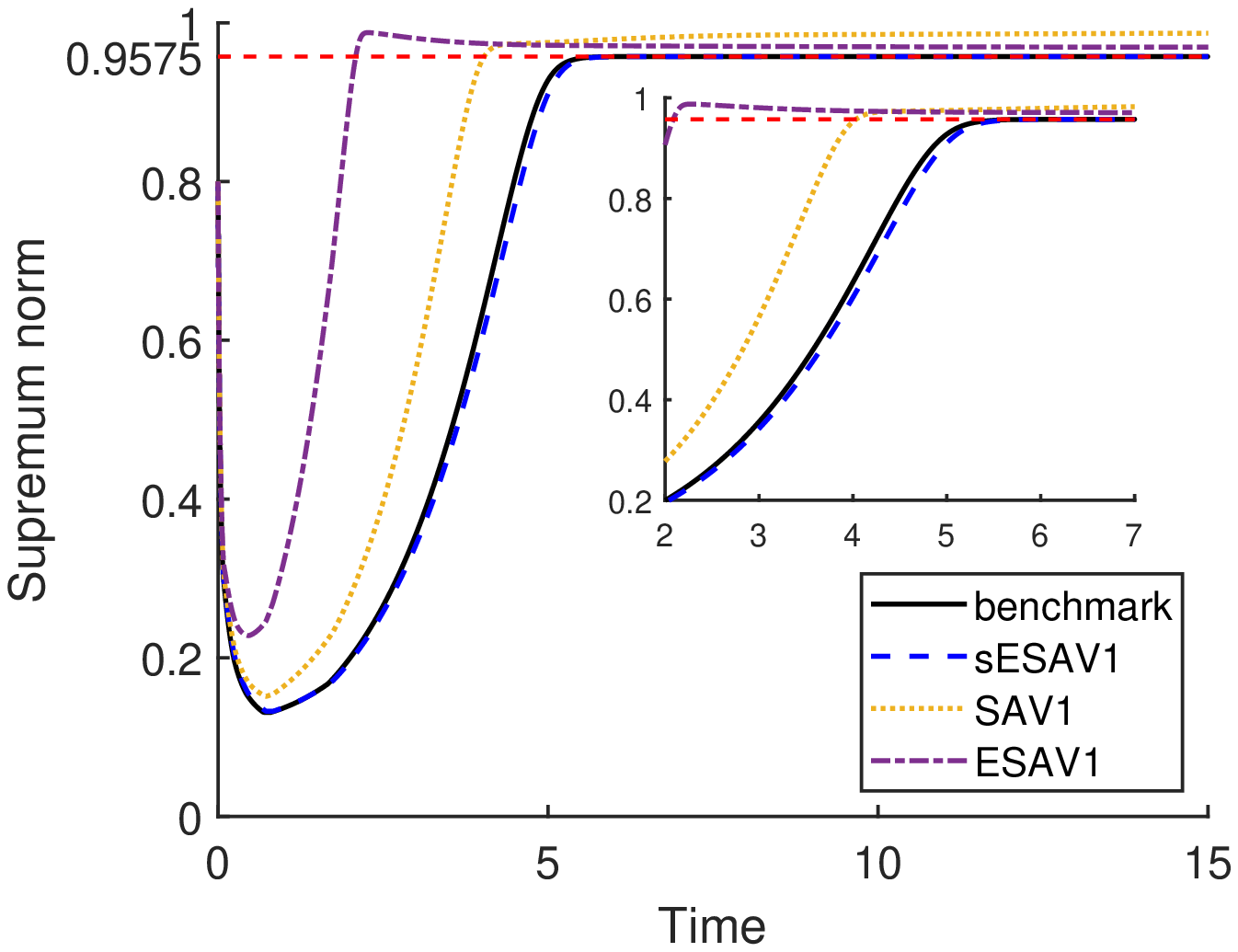}
\includegraphics[width=0.43\textwidth]{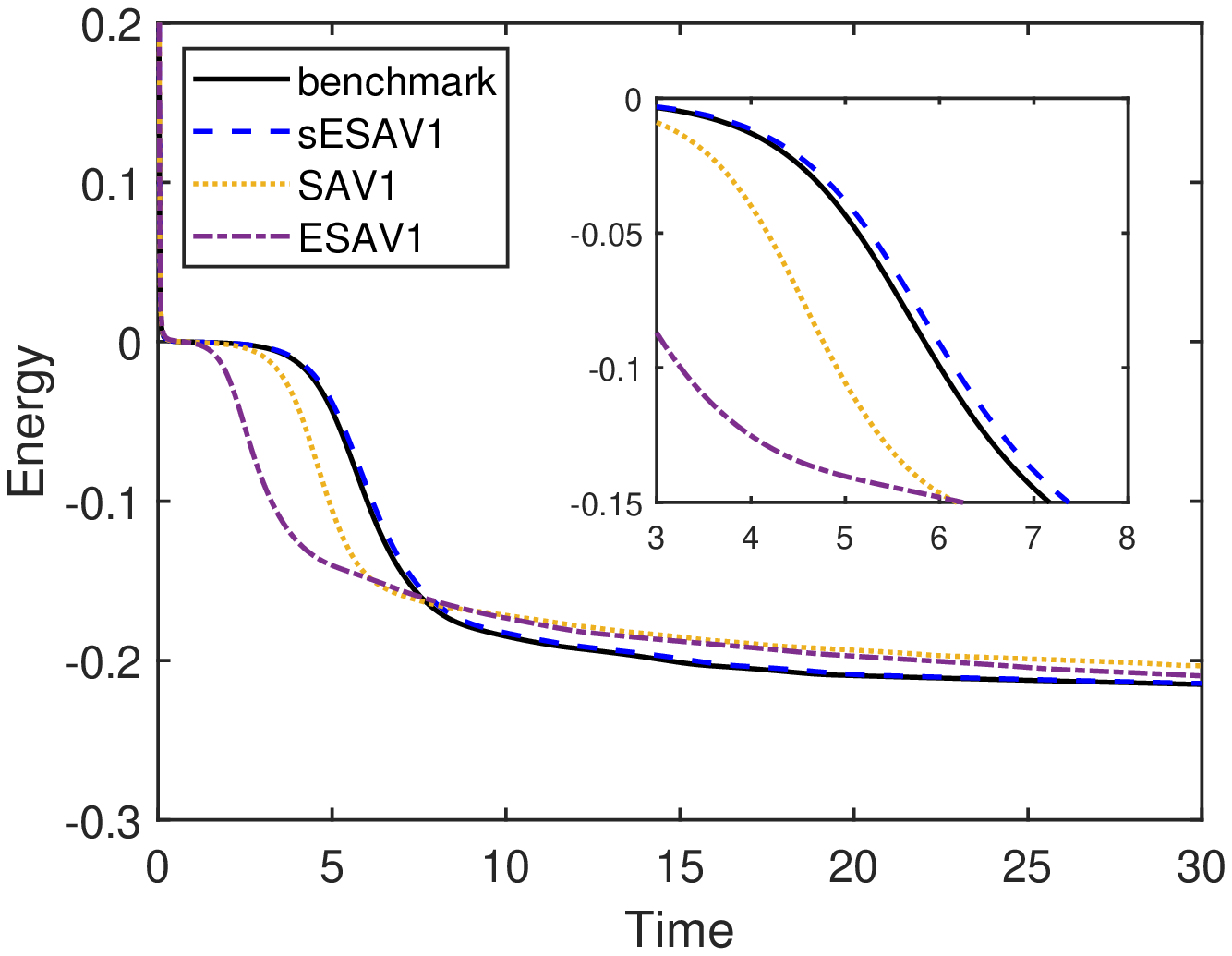}}
\centerline{
\includegraphics[width=0.43\textwidth]{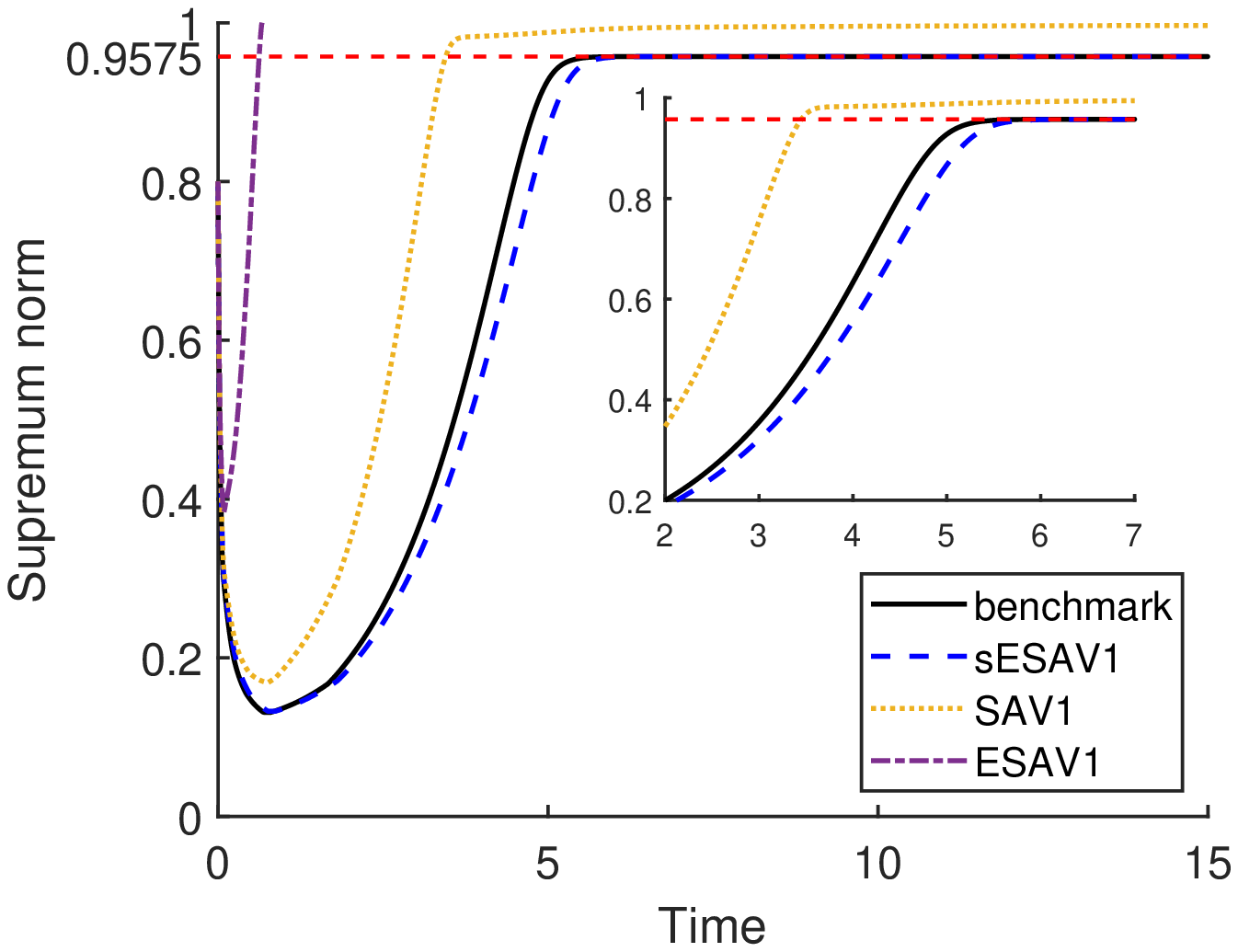}
\includegraphics[width=0.43\textwidth]{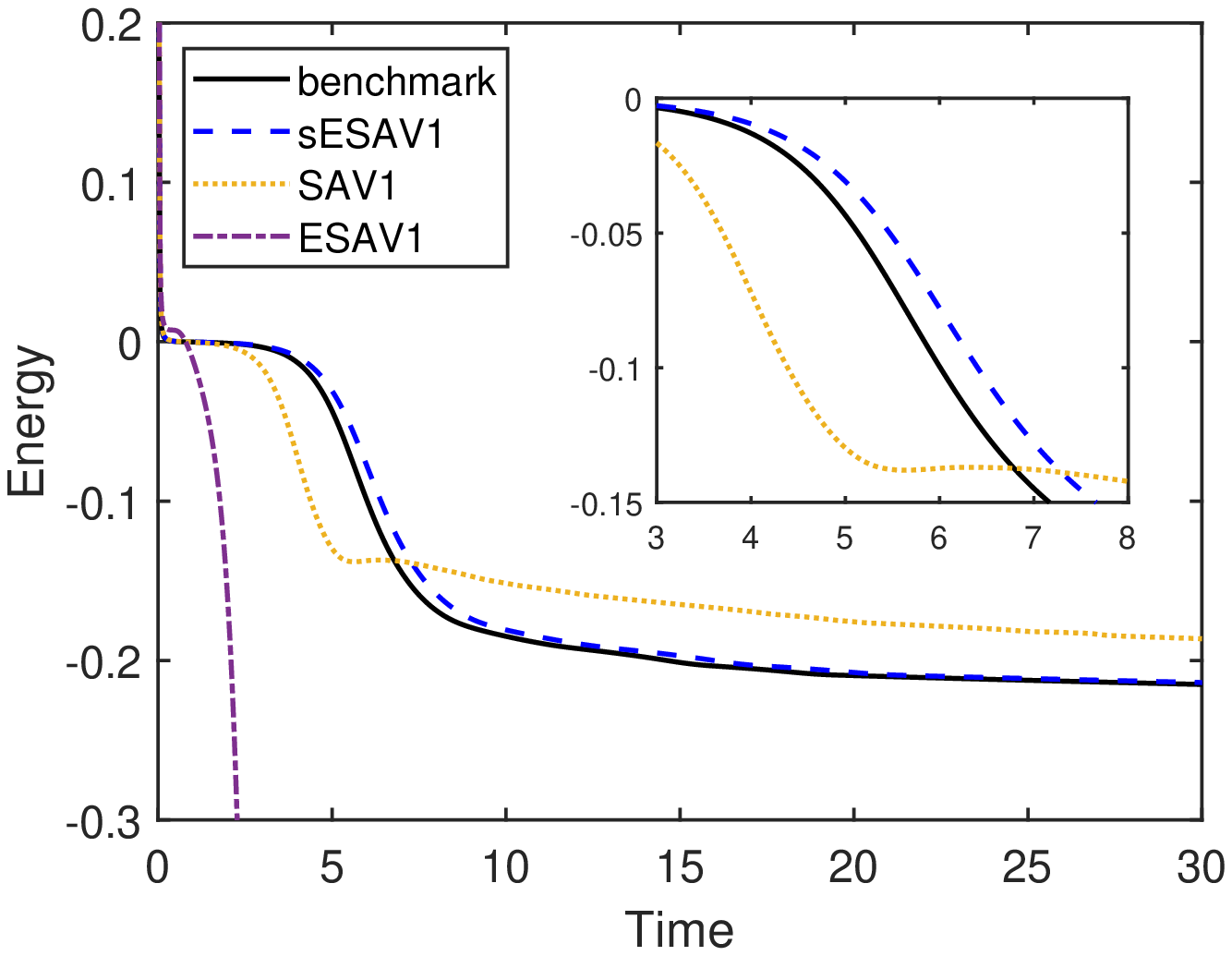}}
\caption{Evolutions of the supremum norms and the energies of  simulated solutions computed by the sESAV1, SAV1, and ESAV1 schemes
with  $\dt=0.01$ and  $\kp=4.01$ (top row) or $\kp=8.02$ (bottom row) for  the Flory--Huggins potential case.}
\label{fig_comp_log1}
\end{figure}

\begin{figure}[!ht]
\centerline{
\includegraphics[width=0.43\textwidth]{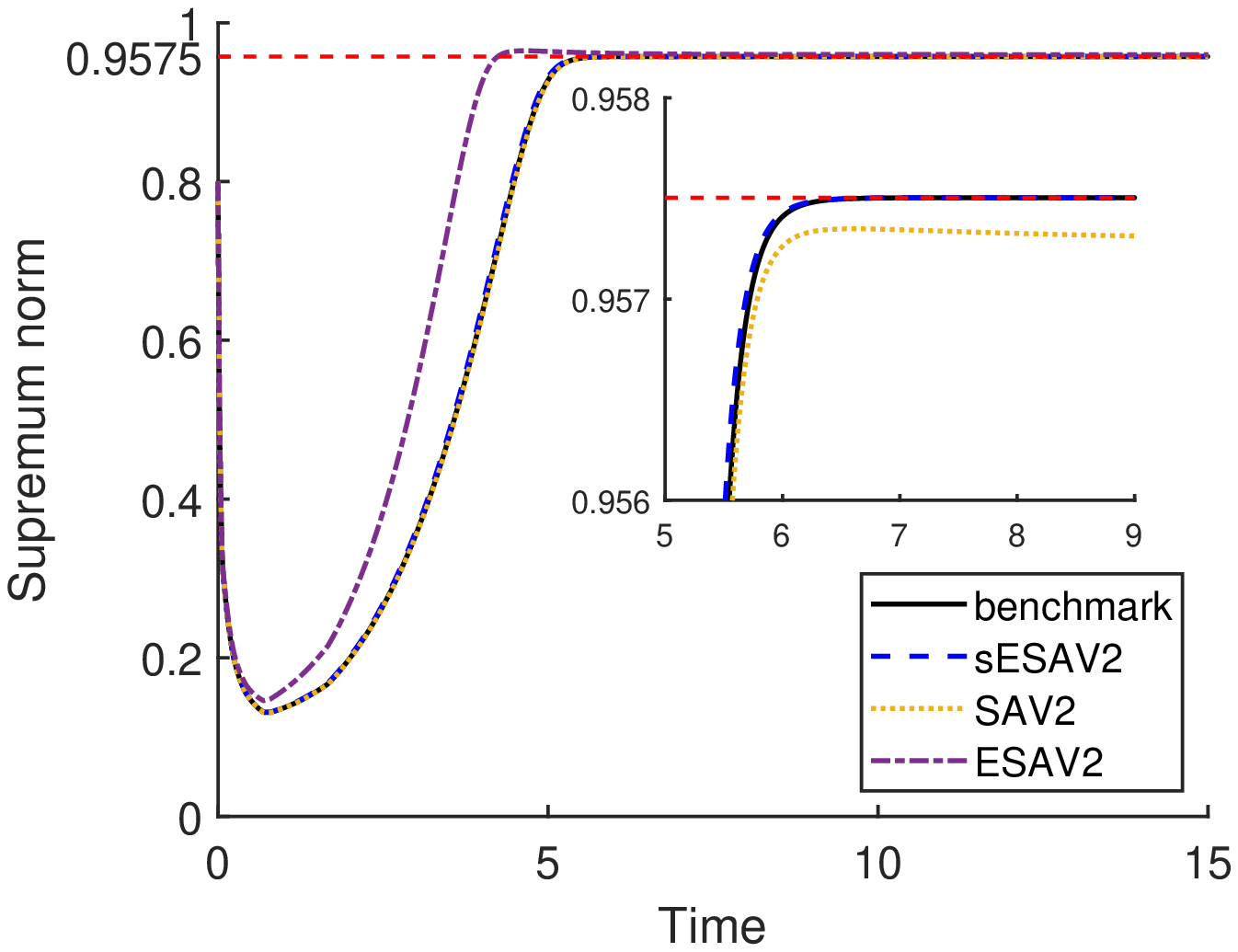}
\includegraphics[width=0.43\textwidth]{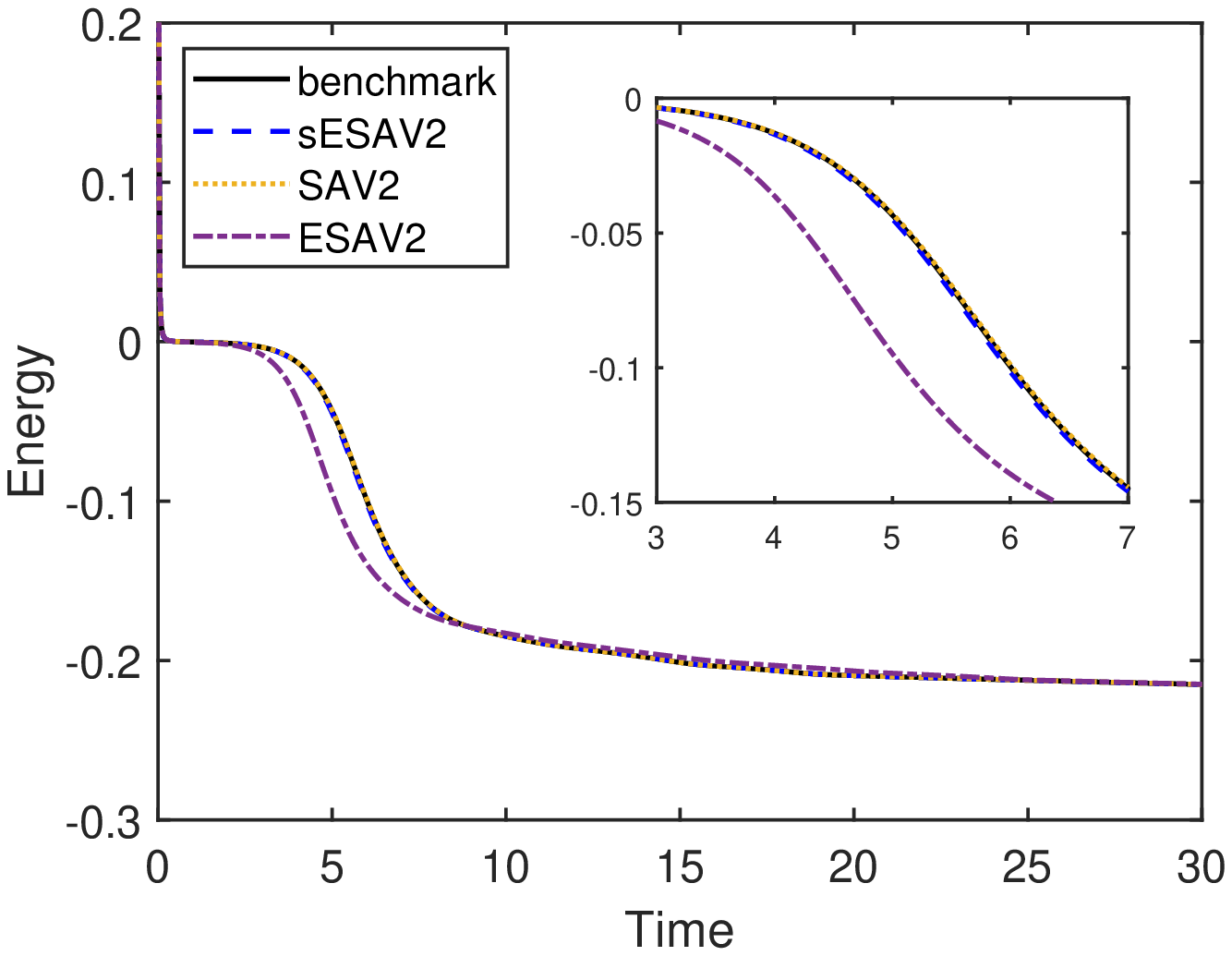}}
\centerline{
\includegraphics[width=0.43\textwidth]{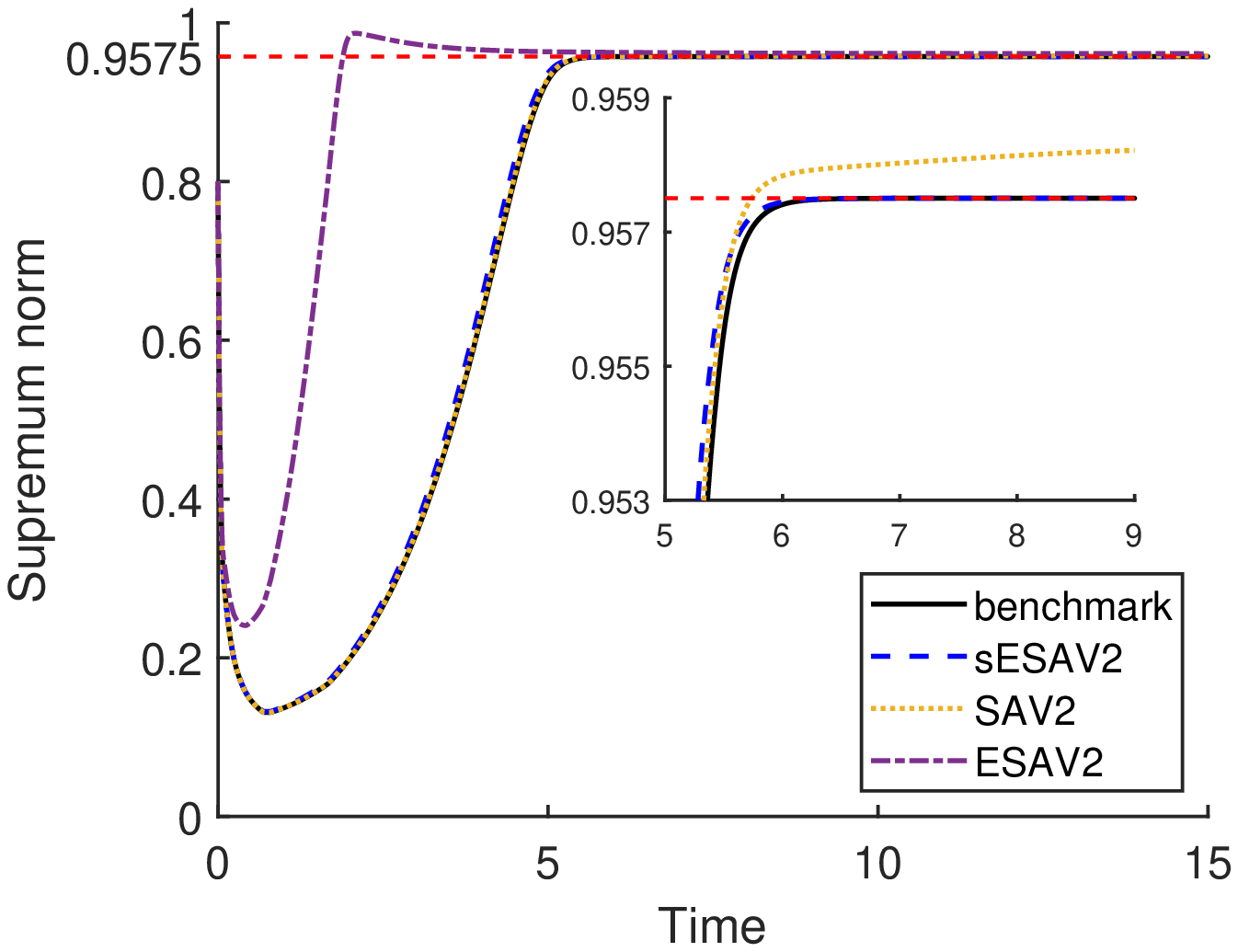}
\includegraphics[width=0.43\textwidth]{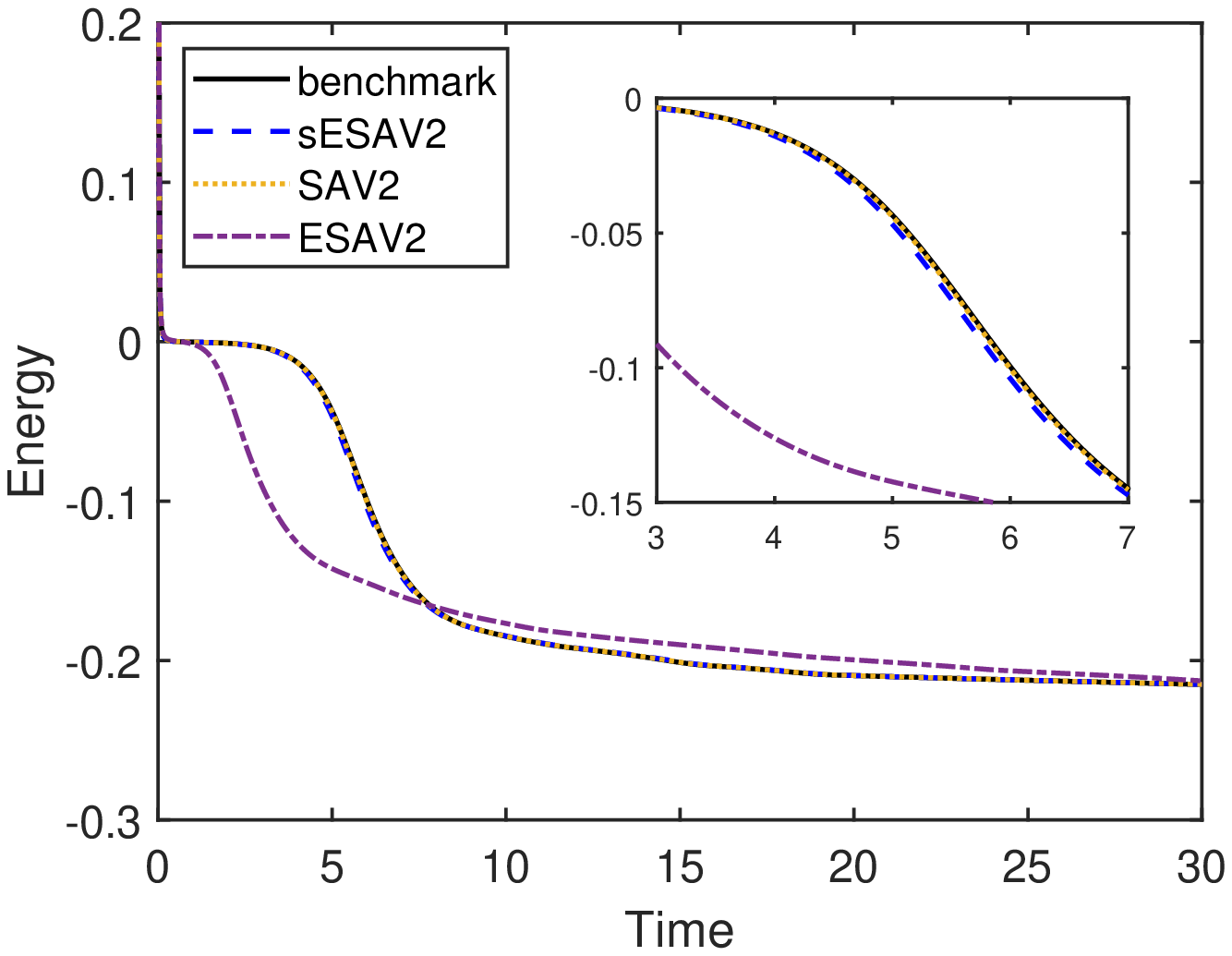}}
\caption{Evolutions of the supremum norms and the energies of  simulated solutions computed by the sESAV2, SAV2, and ESAV2 schemes
with  $\dt=0.01$ and   $\kp=4.01$ (top row) or $\kp=8.02$ (bottom row) for  the Flory--Huggins potential case.}
\label{fig_comp_log2}
\end{figure}

\subsection{Long-time coarsening dynamics simulations}

Now we study the coarsening dynamics driven by the Allen--Cahn equation \eqref{AllenCahn} with $\eps=0.01$.
The spatial mesh size is $h=1/512$ and the initial state is given by random numbers between $-0.8$ and $0.8$.
We adopt the sESAV2 scheme with $\dt=0.01$ to simulate the  long-time coarsening process.
By the comparisons shown above, we know that $\dt=0.01$ is sufficient to provide accurate numerical results.
The steady state of the coarsening dynamics is a constant state $u\equiv\beta$ or $u\equiv-\beta$.
When the absolute difference between the energies at the two consecutive moments is smaller than the tolerance value $10^{-8}$,
we regard the dynamics as reaching its steady state.

For the double-well potential case $f(u)$, we set $\kp=2$ and
the phase structures captured at some moments
are presented in Figure \ref{fig_coarsen_poly1}, and the constant steady state $u\equiv-1$ is reached at around $t=604$.
The left picture given in Figure \ref{fig_coarsen_poly2} implies
the preservation of the MBP during the whole phase transition process.
The  energy evolution  is plotted in the right graph of Figure \ref{fig_coarsen_poly2},
which states the energy dissipation of the process. For the Flory--Huggins potential case, we set $\kp=8.02$ and
the simulated results are shown in Figures \ref{fig_coarsen_log1} and \ref{fig_coarsen_log2}. We observe that
the steady state is reached at around $t=602$ and
the whole process of phase separation is similar to that of the double-well potential case. Those results  are almost identical to
those produced using the IFRK4 scheme in \cite{JuLiQiYa21}.

\begin{figure}[!ht]
\centerline{
\includegraphics[width=0.34\textwidth]{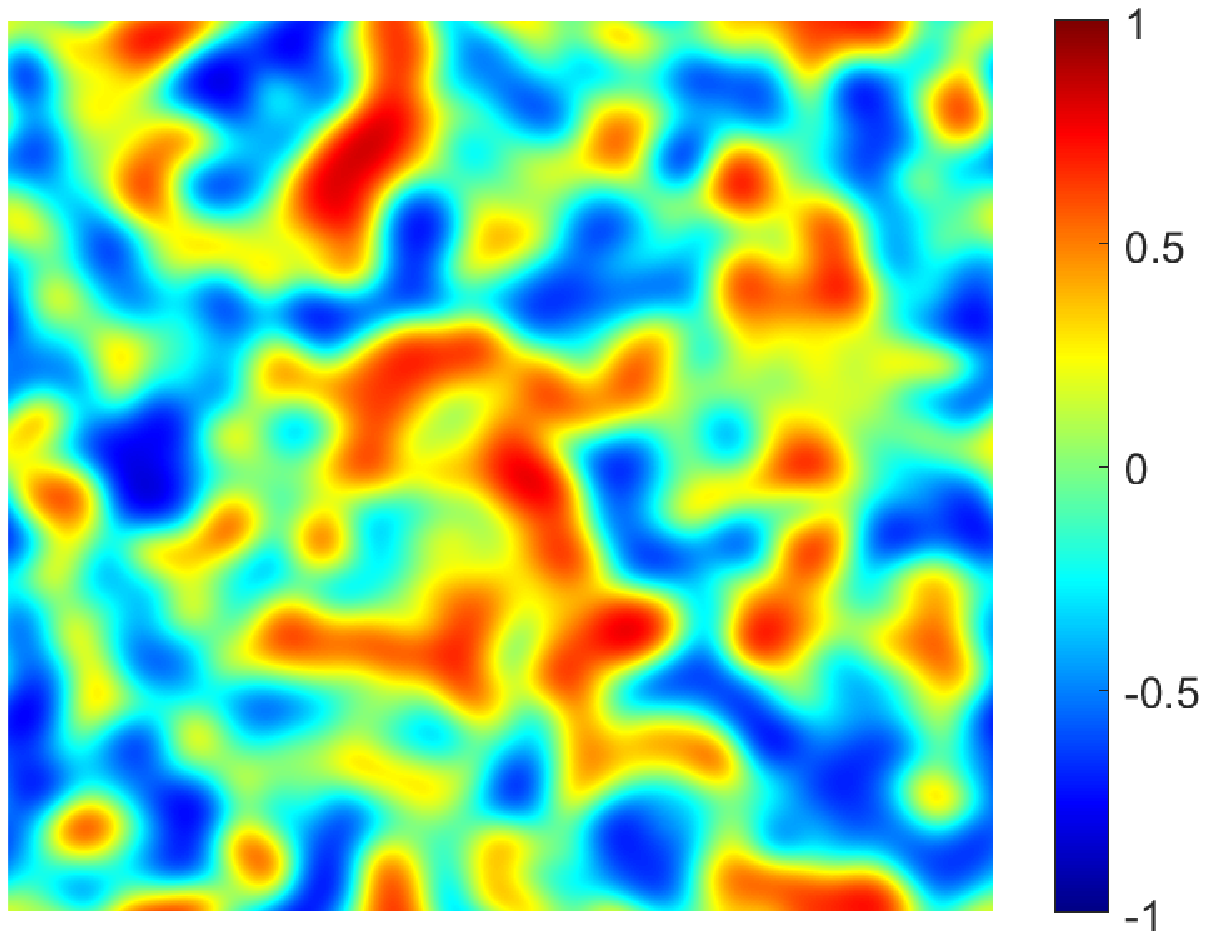}\hspace{-0.45cm}
\includegraphics[width=0.34\textwidth]{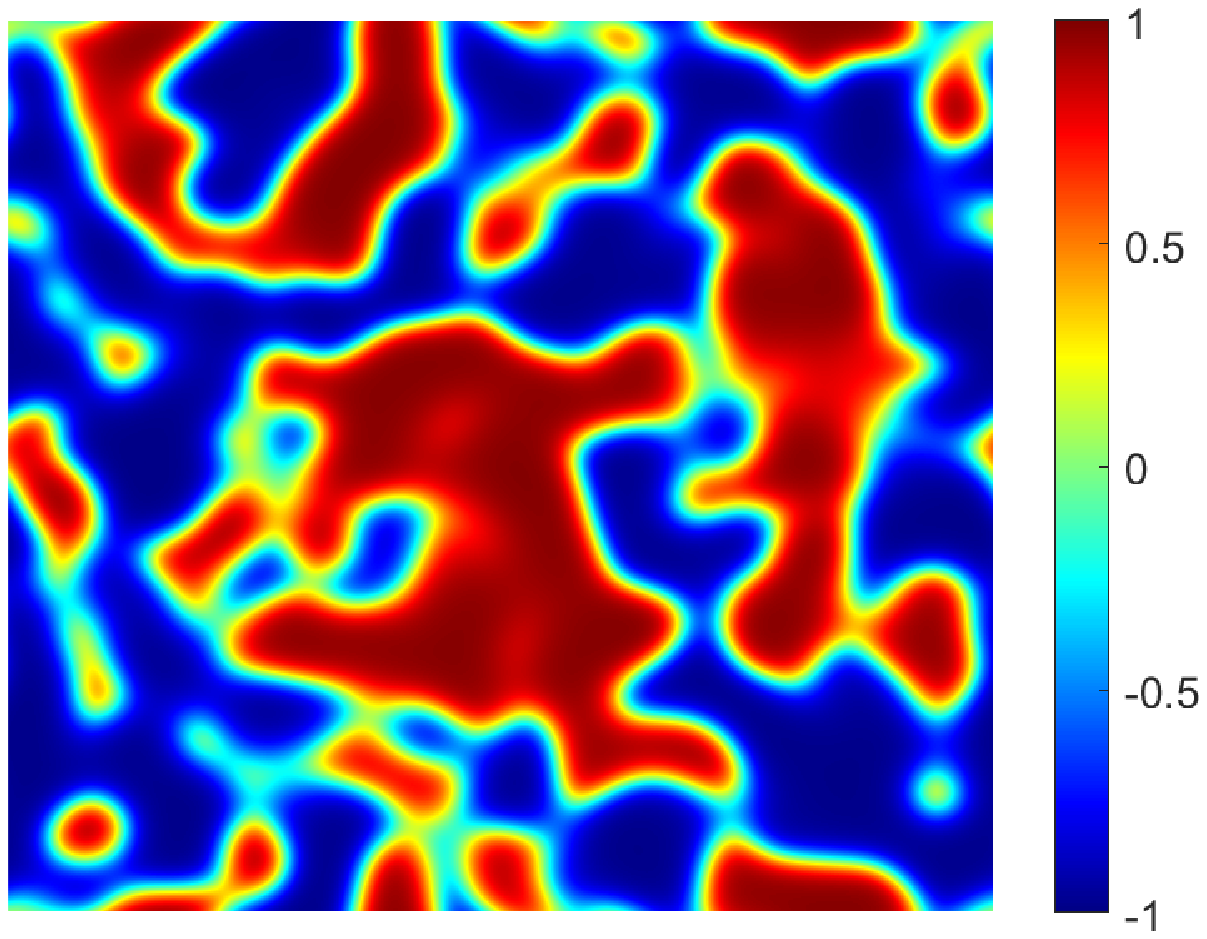}\hspace{-0.45cm}
\includegraphics[width=0.34\textwidth]{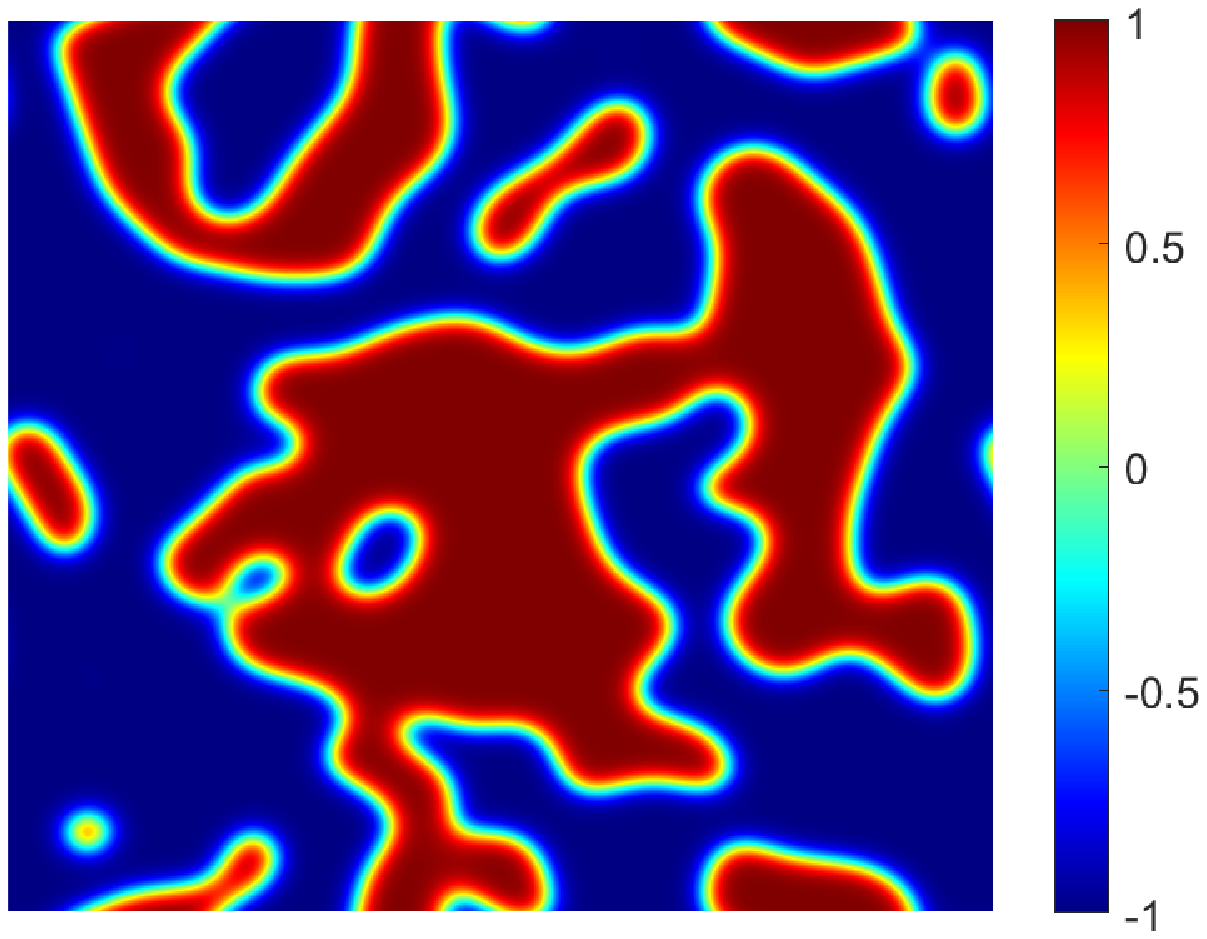}}\vspace{-0.2cm}
\centerline{
\includegraphics[width=0.34\textwidth]{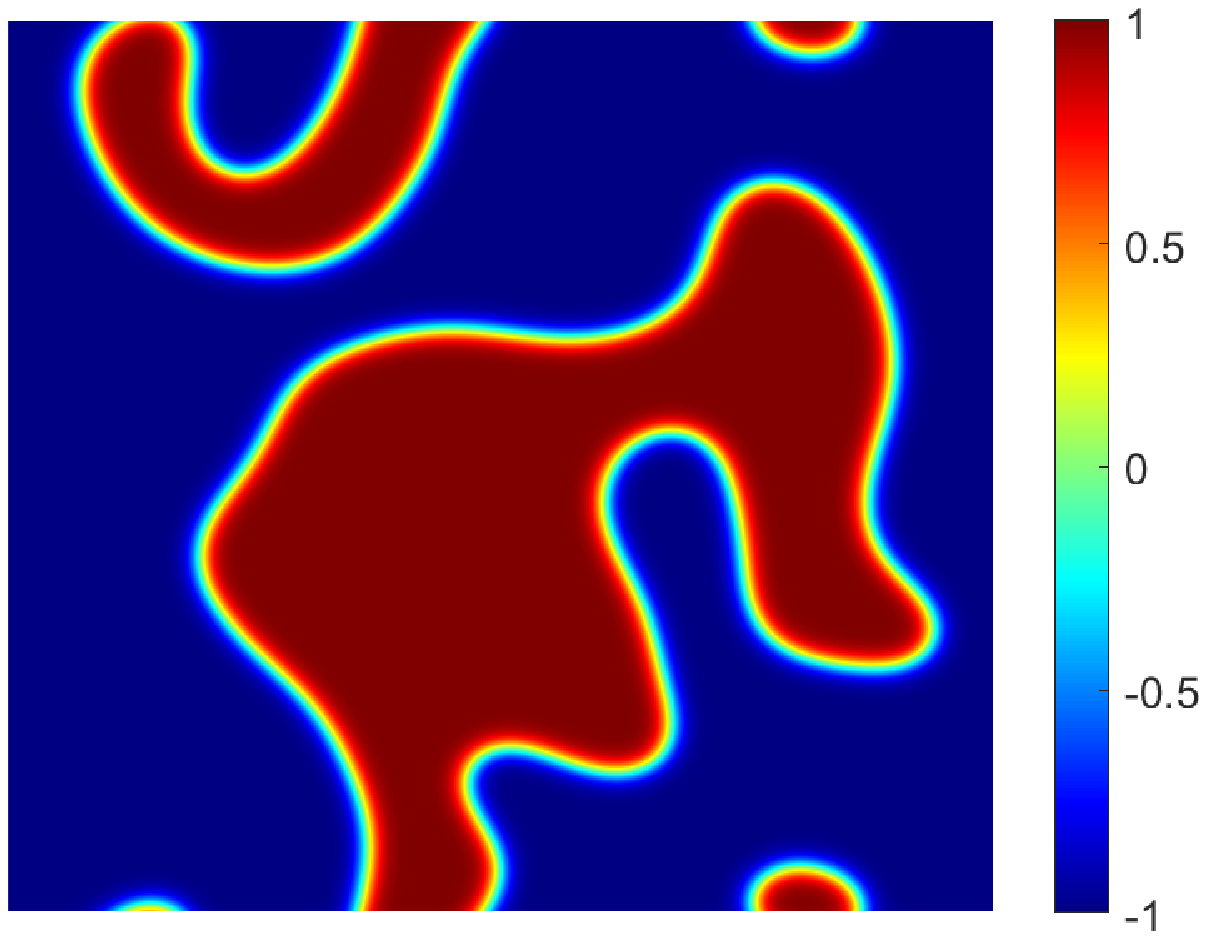}\hspace{-0.45cm}
\includegraphics[width=0.34\textwidth]{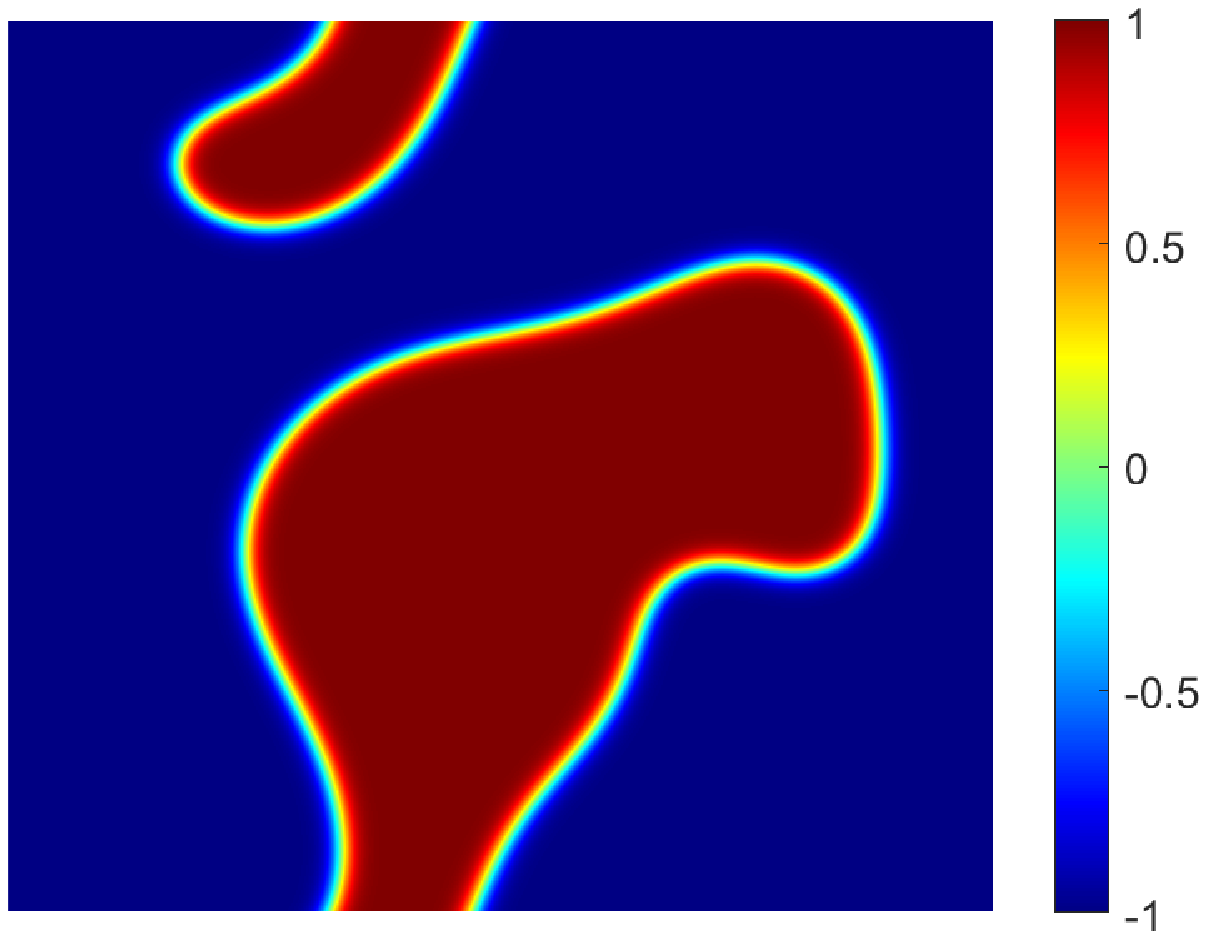}\hspace{-0.45cm}
\includegraphics[width=0.34\textwidth]{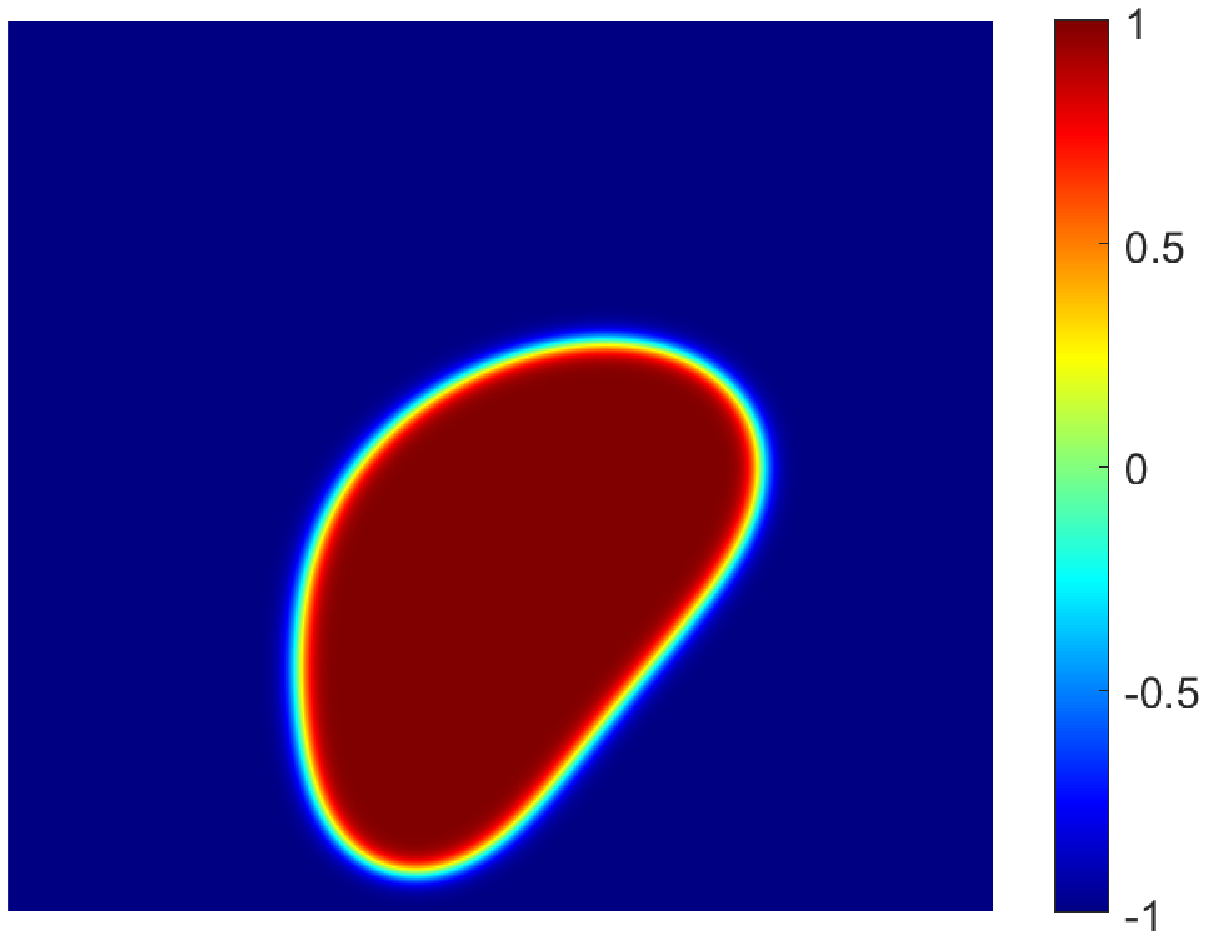}}\vspace{-0.2cm}
\caption{Simulated phase structures at $t=4$, $6$, $10$, $30$, $100$, and $300$, respectively
(left to right and top to bottom) by the sESAV2 scheme with  $\dt=0.01$ and $\kappa=2$  for the coarsening dynamics of the double-well potential case.}
\label{fig_coarsen_poly1}
\end{figure}

\begin{figure}[!ht]
\centerline{
\includegraphics[width=0.43\textwidth]{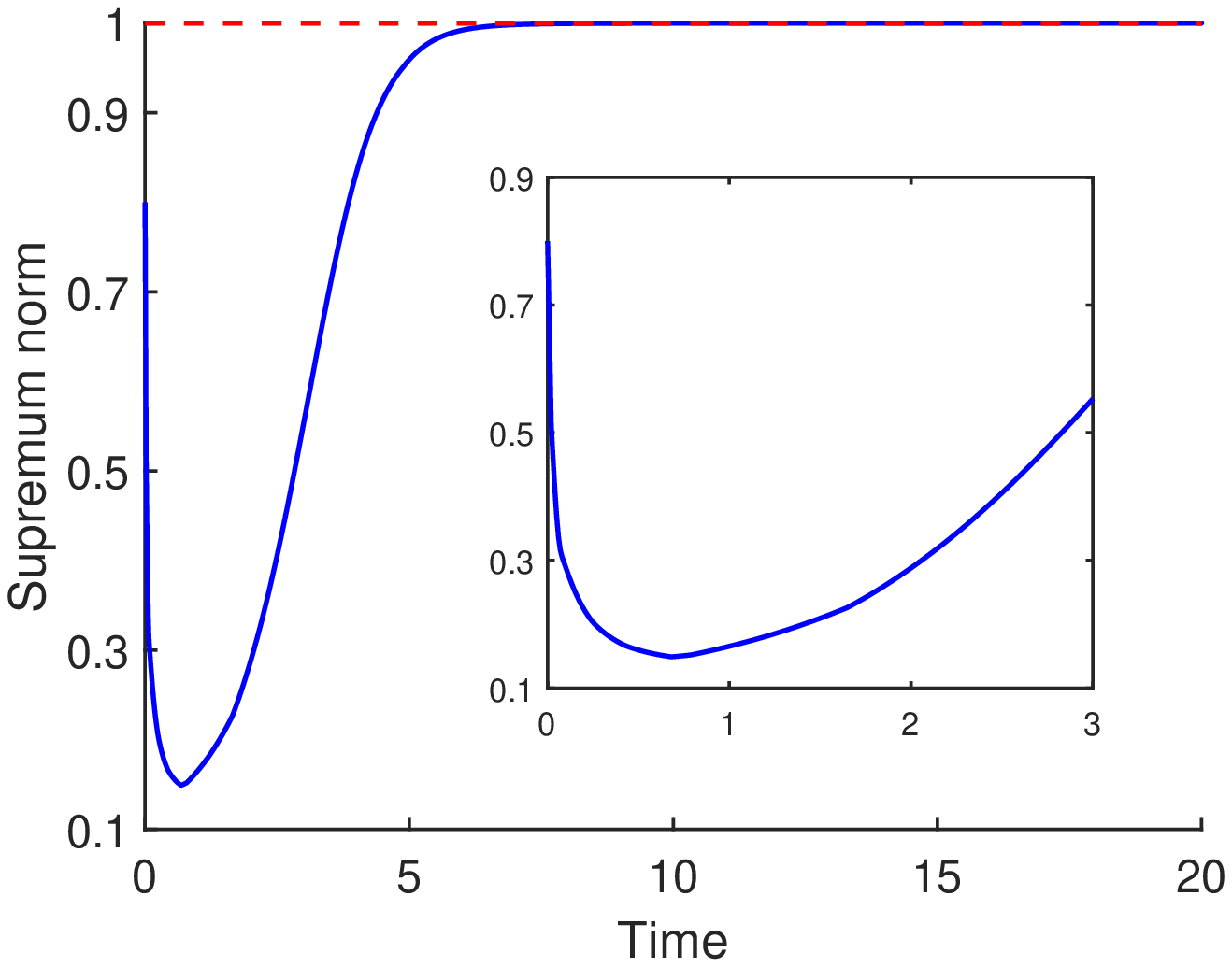}
\includegraphics[width=0.43\textwidth]{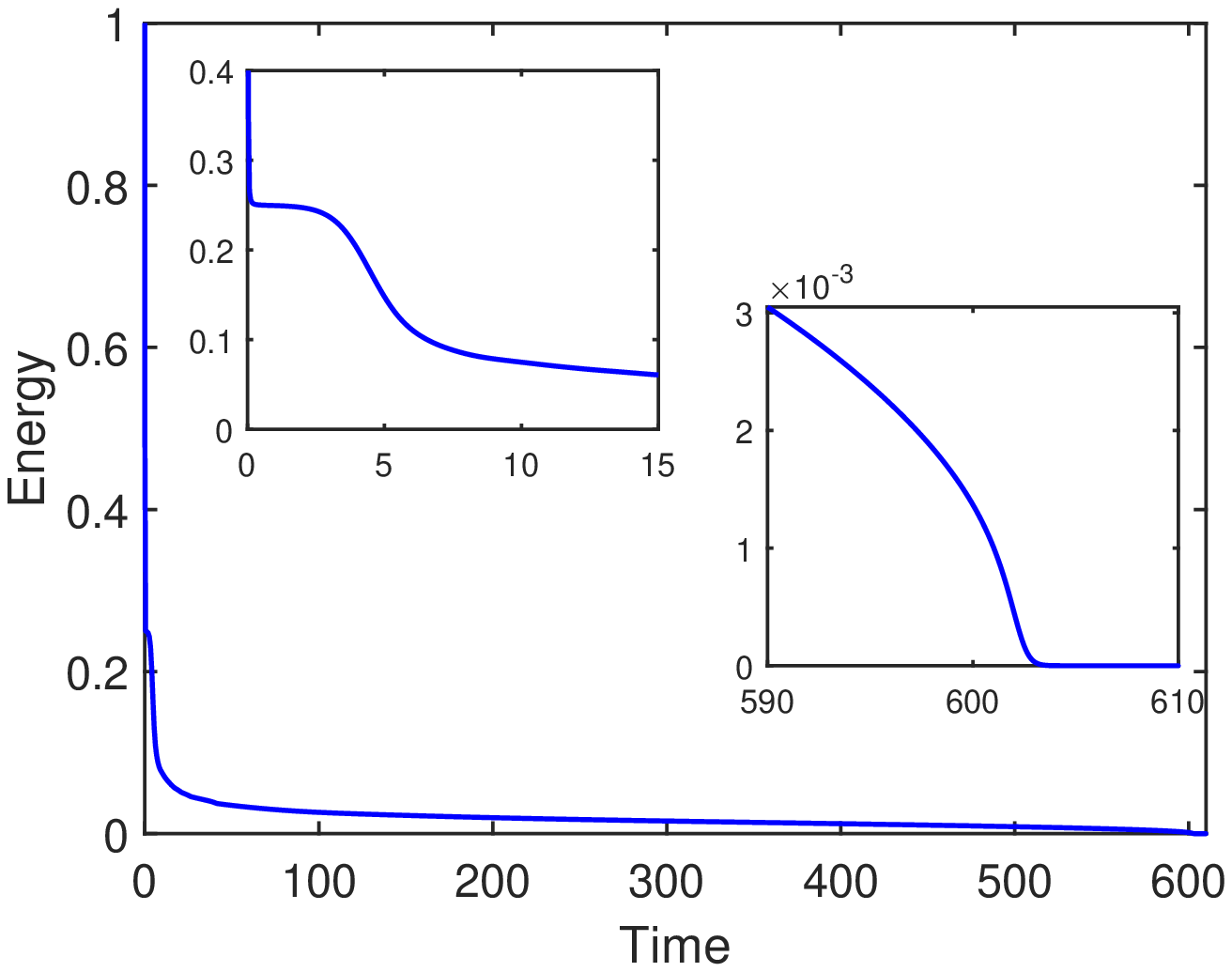}}
\caption{Evolutions of the supremum norm (left) and the energy (right)
for the coarsening dynamics of the double-well potential case.}
\label{fig_coarsen_poly2}
\end{figure}

\begin{figure}[!ht]
\centerline{
\includegraphics[width=0.34\textwidth]{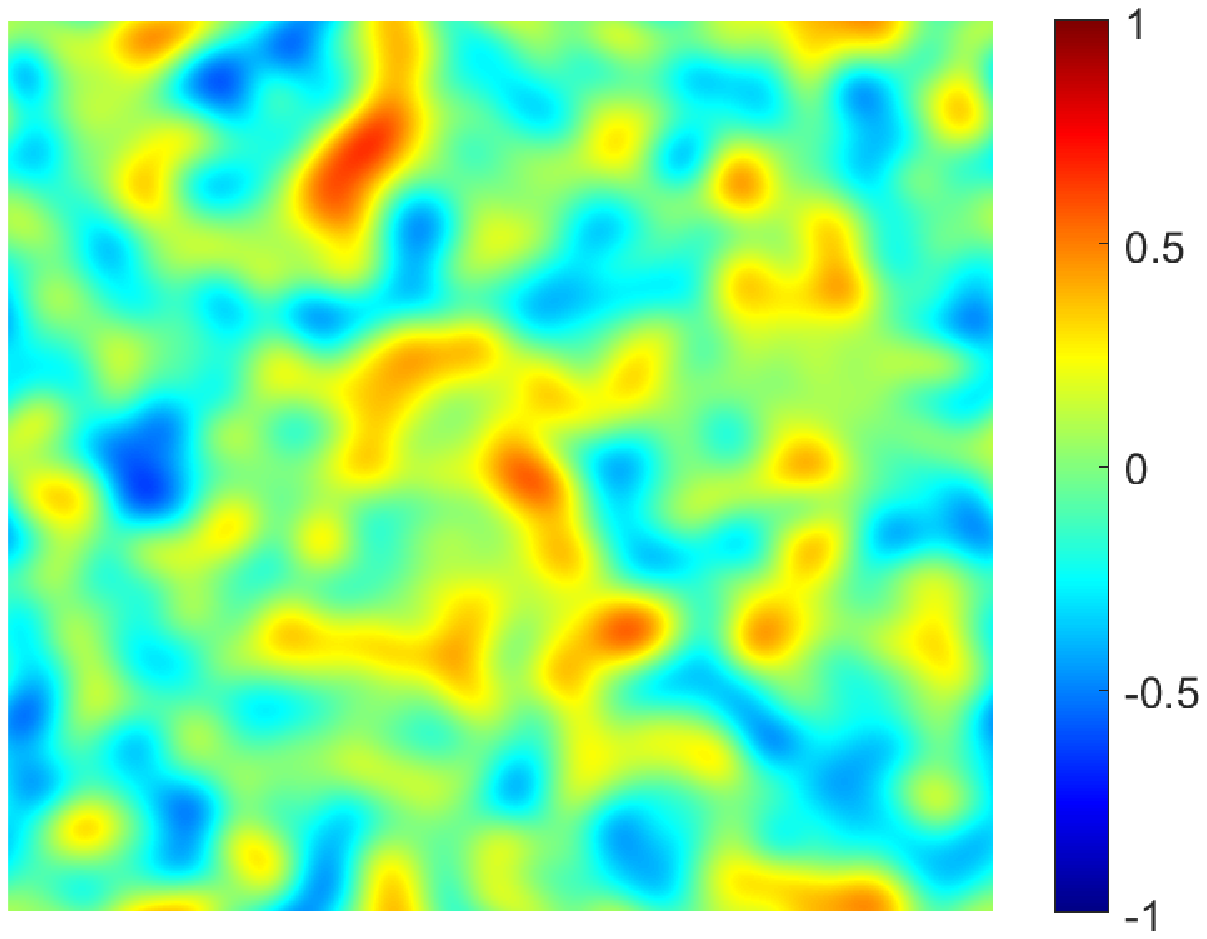}\hspace{-0.45cm}
\includegraphics[width=0.34\textwidth]{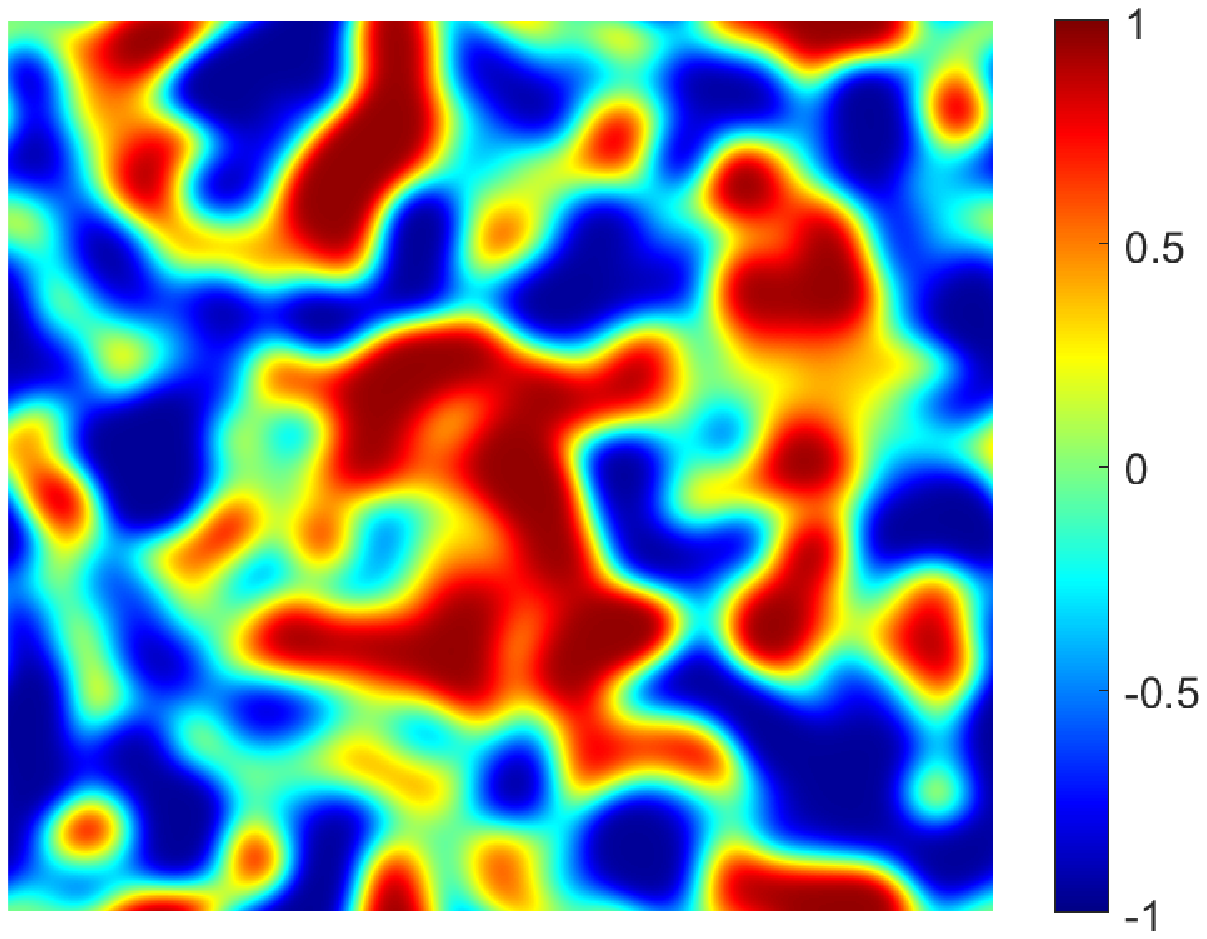}\hspace{-0.45cm}
\includegraphics[width=0.34\textwidth]{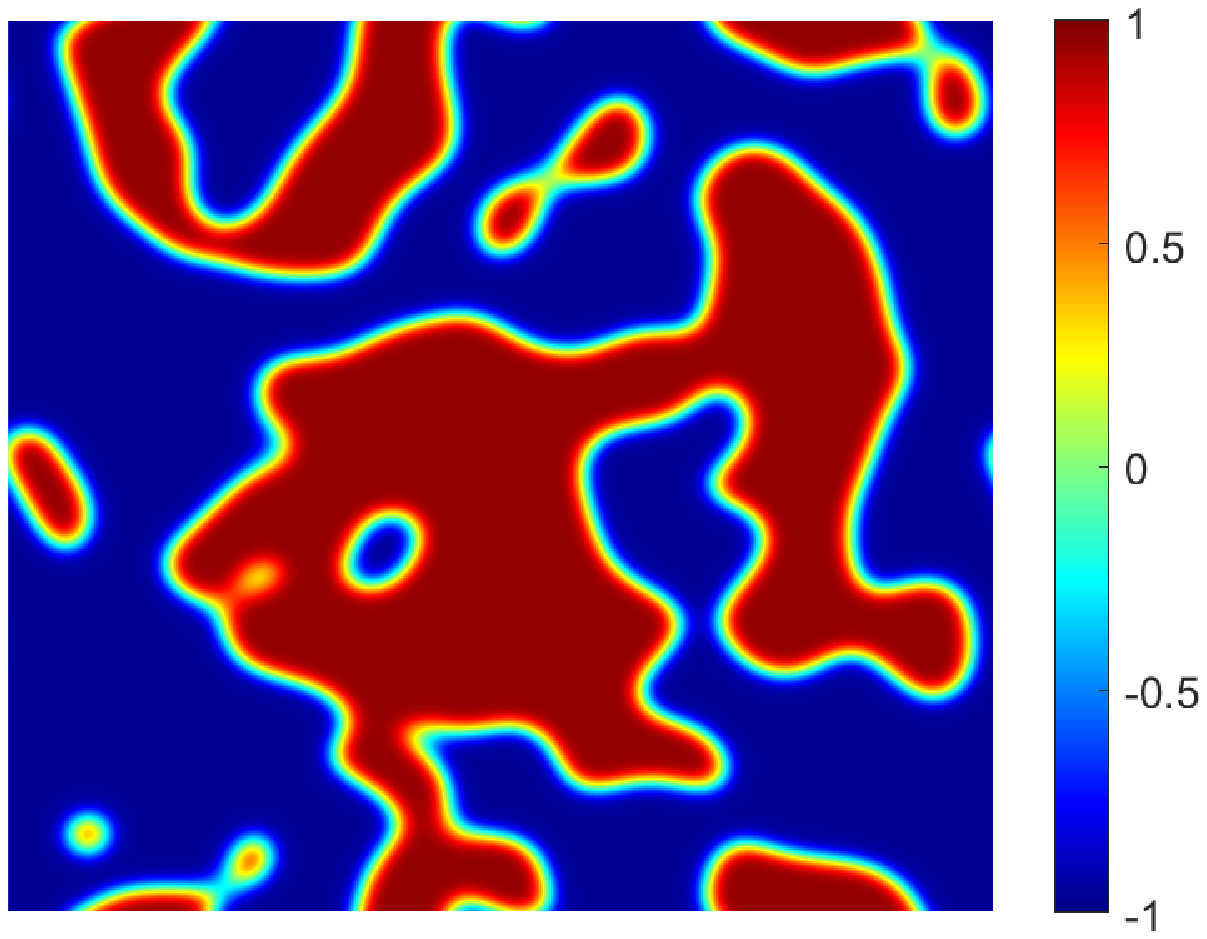}}\vspace{-0.2cm}
\centerline{
\includegraphics[width=0.34\textwidth]{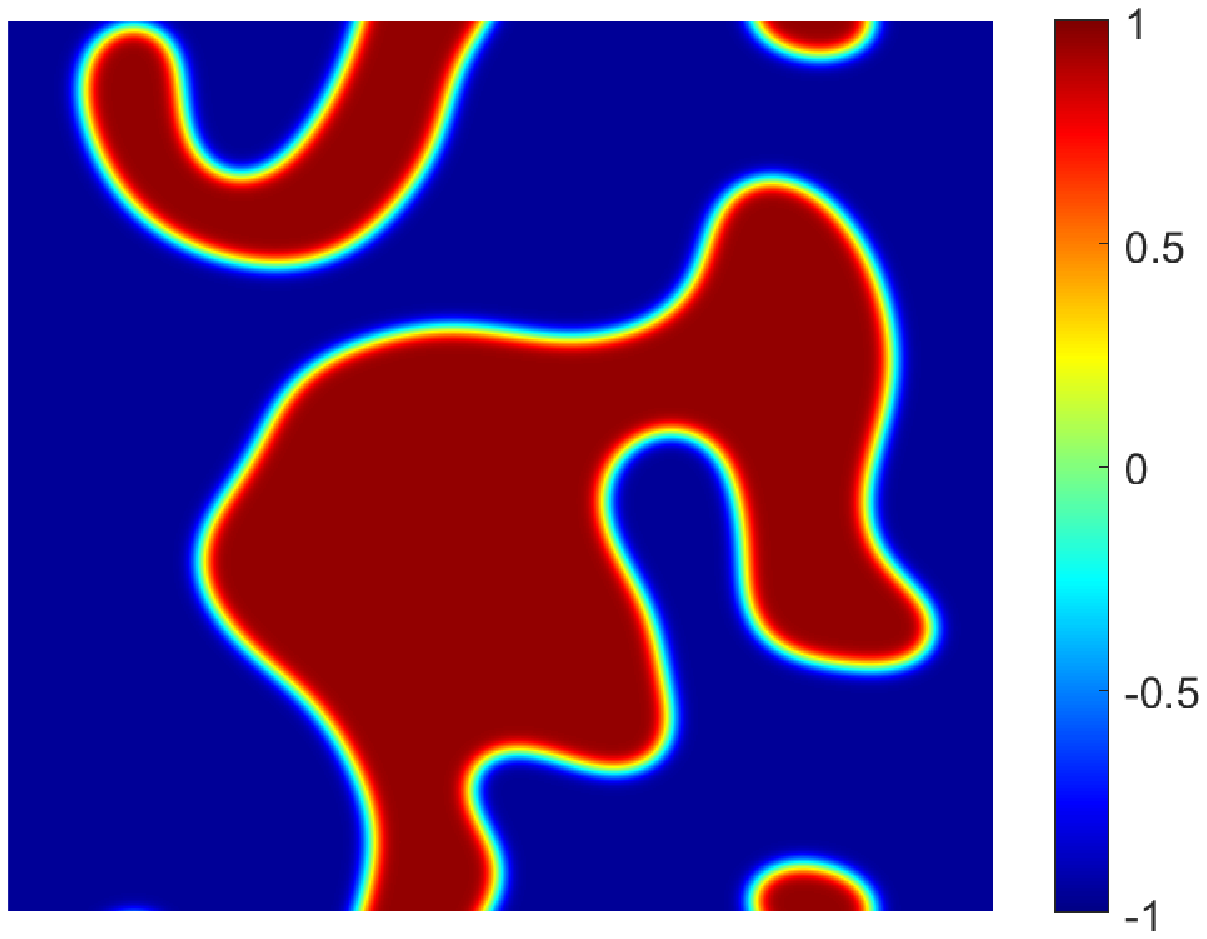}\hspace{-0.45cm}
\includegraphics[width=0.34\textwidth]{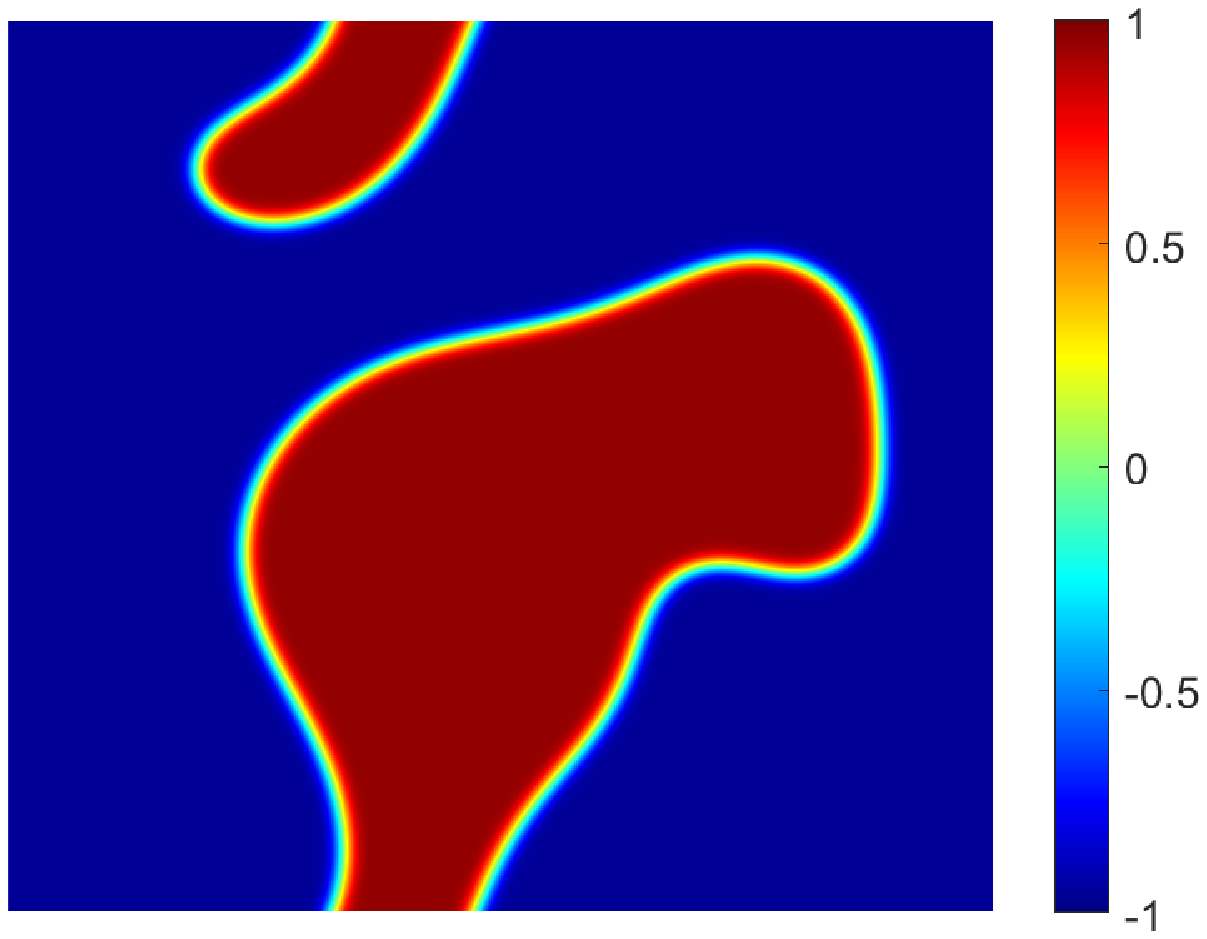}\hspace{-0.45cm}
\includegraphics[width=0.34\textwidth]{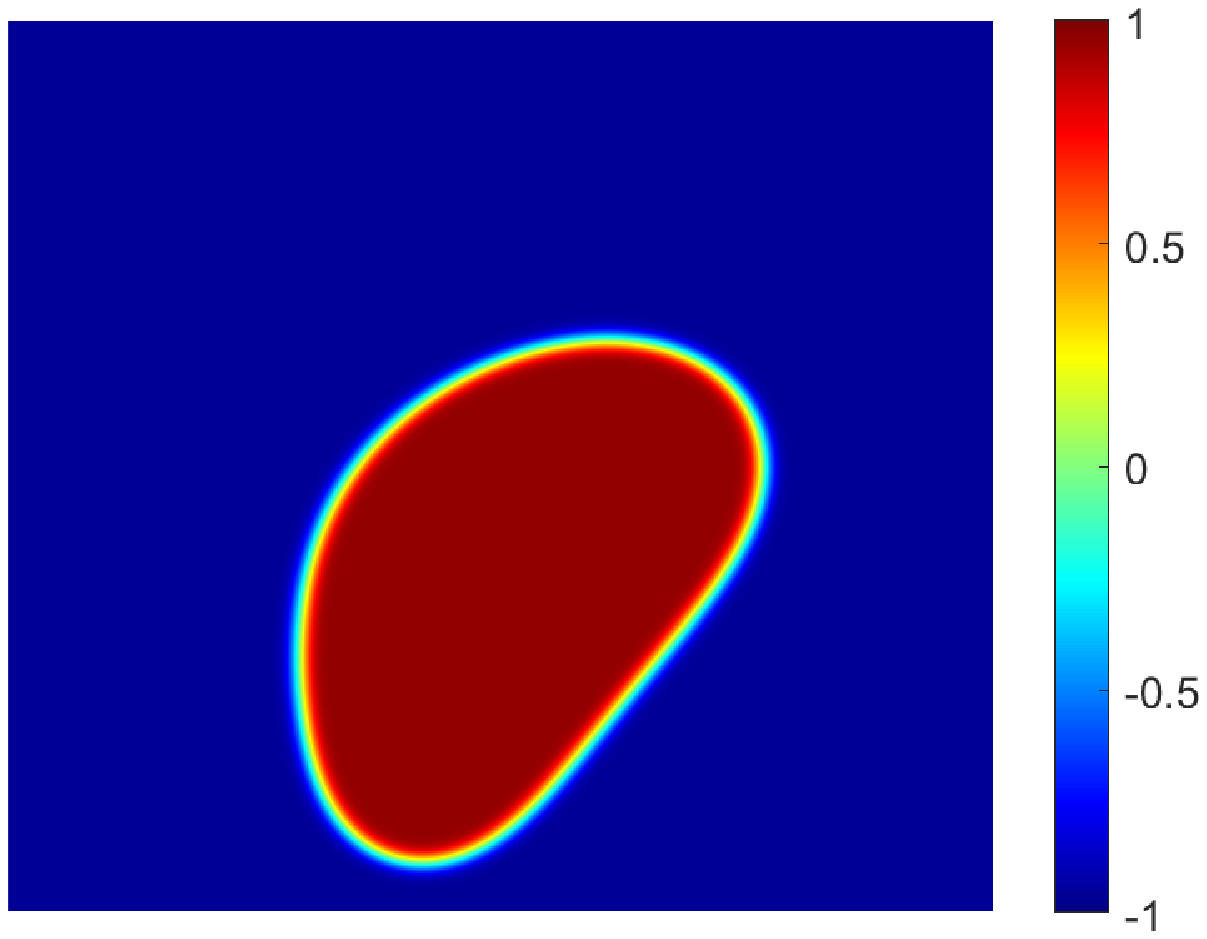}}\vspace{-0.2cm}
\caption{Simulated phase structures at $t=4$, $6$, $10$, $30$, $100$, and $300$, respectively
(left to right and top to bottom) by the sESAV2 scheme with  $\dt=0.01$ and $\kappa=8.02$  for the coarsening dynamics of the Flory--Huggins potential case.}
\label{fig_coarsen_log1}
\end{figure}

\begin{figure}[!ht]
\centerline{
\includegraphics[width=0.43\textwidth]{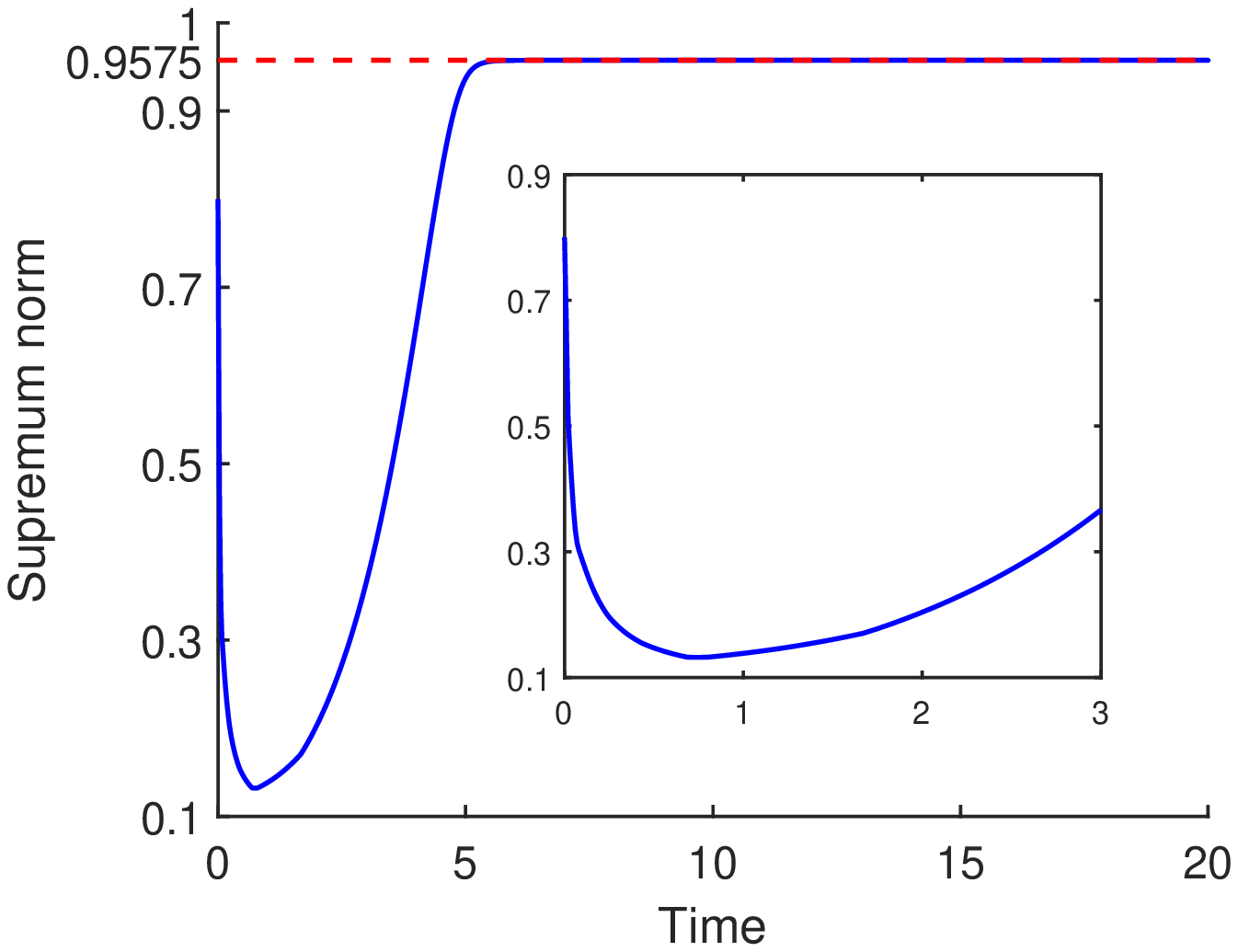}
\includegraphics[width=0.43\textwidth]{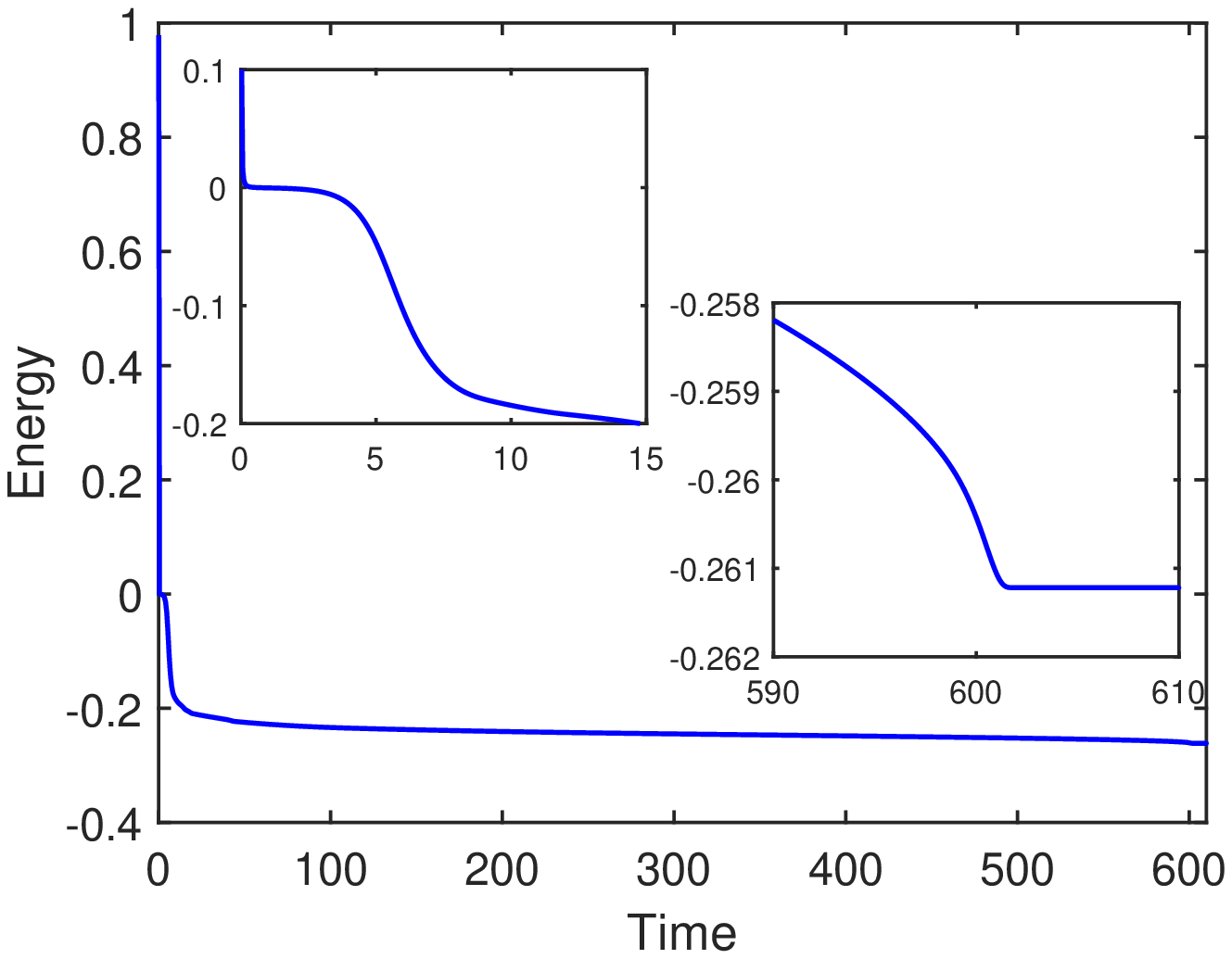}}
\caption{Evolutions of the supremum norm (left) and  the energy (right)
for the coarsening dynamics of the Flory--Huggins potential case.}
\label{fig_coarsen_log2}
\end{figure}

\section{Conclusion}
\label{sect_conclusion}

In this paper, we study MBP-preserving and energy dissipative schemes for the Allen--Cahn type equations
by combining the ESAV approach with stabilizing technique.
We present first- and second-order sESAV schemes
and prove their MBP preservation, energy dissipation, and error estimates.
The main results and observations include two aspects.
First, we choose the ESAV  approach rather than the classic SAV approach,
since the coefficient ($g(u^n,s^n)$ or $ g(\widehat{u}^{n+\frac{1}{2}},\widehat{s}^{n+\frac{1}{2}})$)
of the nonlinear term is positive automatically in the former one
while the sign of the corresponding coefficient is uncertain for the later one.
Second, to guarantee the MBP-preserving property,
we add the stabilization term as an extra artificial term,
that is, add and subtract a linear term in the scheme instead of a quadratic term in the energy functional;
they are equivalent mutually for the classic stabilization or convex splitting method,
but not for the SAV approach. Moreover, we find that
the MBP preservation and the energy dissipation of the sESAV schemes
can be established in parallel and independently,
unlike the purely stabilized semi-implicit scheme discussed in \cite{TaYa16}
where the MBP is needed first to bound the nonlinear term
in the proof of the stability with respect to the original energy.
Since the schemes we studied are all one-step methods,
adaptive time-stepping strategies (such as \cite{QiaoZhTa11})
can be inherently adopted to accelerate the computation.

Some generalizations can be carried out by replacing the Laplace operator in \eqref{AllenCahn} by some analogues,
for instance,
the nonlocal diffusion \cite{DuGuLeZh12} and the fractional Laplace operators \cite{SmakoKiMa93}
which also satisfy the semigroup property
with their discretizations satisfying the analogues of Lemma \ref{lem_lapdiff}. Furthermore, the proposed stabilizing approaches in this paper
can be naturally extended to  many other type of gradient flow problems, which can be handled by the existing SAV schemes.
For example, the fourth-order Cahn--Hilliard equation is the $H^{-1}$ gradient flow of the energy functional \eqref{energy},
and satisfies the same energy dissipation law as the Allen--Cahn equation.
The MBP is not valid anymore, but the solution is still $L^\infty$ stable.
In the similar spirit of this paper, it is interesting to develop the sESAV schemes for the Cahn--Hilliard equation,
and the discrete $L^\infty$ stability of the sESAV solution
can be established by combining the high-order consistency analysis and stability estimate,
as done in \cite{GuWaWi14,LiQiWa21}, which will also be one of our future works.


%
%



\end{document}